\newcommand\cD{\mathcal{D}}
\definecolor{linkred}{rgb}{0.75,0,0}
\definecolor{linkblue}{rgb}{0,0,0.75}
\theoremstyle{plain}
\newtheorem{theorem}{Theorem}
\newtheorem{proposition}{Proposition}[section]
\newtheorem{lemma}[proposition]{Lemma}
\newtheorem{thrm}[proposition]{Theorem}
\newtheorem{corollary}[proposition]{Corollary}
\newcommand{\bt}{\begin{theorem}}
\newcommand{\et}{\end{theorem}}
\theoremstyle{definition}
\newtheorem{definition}[proposition]{Definition}
\newtheorem{remark}[proposition]{Remark}
\newcommand{\beq}{\begin{equation}}
\newcommand{\eeq}{\end{equation}}
\newcommand{\cal}{\mathcal}
\newcommand{\cb}{\mathcal{B}}
\newcommand{\cc}{\mathcal{C}}
\newcommand{\cd}{\mathcal{D}}
\newcommand{\cp}{{\cal P}}
\newcommand{\bp}{\mathbb{P}}
\newcommand{\bq}{\mathbb{Q}}
\newcommand{\br}{\mathbb{R}}
\newcommand{\bz}{\mathbb{Z}}
\newcommand{\modm}{\cal M}
\newcommand{\wc}{\mathfrak{wc}}
\newcommand{\Mbar}[2]{\overline{\mathcal{M}}_{{#1}, {#2}}}
\newenvironment{customthm}[1]
  {\innercustomthm}
  {\endinnercustomthm}
\newcommand\bbC{\mathbb{C}}
\newcommand\bbR{\mathbb{R}}
\newcommand\bbZ{\mathbb{Z}}
\newcommand\cA{\mathcal{A}}
\newcommand\cB{\mathcal{B}}
\newcommand\cC{\mathcal{C}}
\newcommand\cL{\mathcal{L}}
\newcommand\cM{\mathcal{M}}
\newcommand\cP{\mathcal{P}}
\newcommand\cV{\mathcal{V}}
\renewcommand{\b}{\boldsymbol}
\begin{document}
	
\title{Weil-Petersson volumes, stability conditions and wall-crossing}

\author{Lukas Anagnostou}
\address{School of Mathematics and Statistics, University of Melbourne, VIC 3010, Australia}
\email{\href{mailto:lanagnostou@student.unimelb.edu.au}{lanagnostou at student.unimelb.edu.au}}
\email{\href{mailto:norbury@unimelb.edu.au}{norbury at unimelb.edu.au}}

\author{Scott Mullane}
\address{Humboldt-Universit\"at zu Berlin, Institut f\"ur Mathematik,  Unter den Linden 6
\hfill \newline\texttt{}
10099 Berlin, Germany and School of Mathematics and Statistics, University of Melbourne, VIC 3010, Australia}
\email{\href{mailto:mullanes@unimelb.edu.au}{mullanes at unimelb.edu.au}}

\author{Paul Norbury}

\thanks{}
\subjclass[2020]{14H10; 14D23; 32G15}
\date{\today}

\begin{abstract}
In this paper we study Weil-Petersson volumes of the moduli spaces of conical hyperbolic surfaces.  The moduli spaces are parametrised by their cone angles which naturally live inside Hassett's space of stability conditions on nodal curves.  Such stability conditions  produce weighted pointed stable curves which define compactifications of the moduli space of curves generalising the Deligne-Mumford compactification.   The space of stability conditions decompose into chambers separated by walls.  We assign to each chamber a polynomial corresponding to the Weil-Petersson volume of a moduli space of conical hyperbolic surfaces.   The chambers are naturally partially ordered and the maximal chamber is assigned Mirzakhani's polynomial.  We calculate wall-crossing polynomials, which relates the polynomial on any chamber to Mirzakhani's polynomial via wall-crossings,
and we show how to apply this in particular cases.  Since the polynomials are volumes, they have nice properties such as positivity, continuity across walls, and vanishing in certain limits.  
\end{abstract}

\maketitle

\addtocontents{toc}{\protect\setcounter{tocdepth}{1}}

\tableofcontents

\section{Introduction}

The Weil-Petersson symplectic form $\omega^{WP}$ is a K\"ahler form defined on the moduli space $\modm_{g,n}$ of genus $g$ curves with $n$ marked points.  It defines a finite measure, hence a well-defined volume of the moduli space.  To study the volume of $\modm_{g,n}$, Mirzakhani \cite{MirSim} used a family of non-K\"ahler deformations of $\omega^{WP}$, defined by deforming complete hyperbolic surfaces to hyperbolic surfaces with geodesic boundary.  The new symplectic structures produce a family of volumes which Mirzakhani proved are given by polynomials in the lengths of the geodesic boundary.  In this paper we consider K\"ahler deformations of $\omega^{WP}$, defined in \cite{STrWei} by deforming complete hyperbolic surfaces to conical hyperbolic surfaces.  This produces  a family of volumes which we prove to be piecewise polynomial, depending on a chamber structure arising out of stability conditions due to Hassett \cite{HasMod}.

The moduli space of hyperbolic surfaces with labeled boundary components is defined as
\begin{align*}
\modm_{g,n}^{\text{hyp}}(L_1&,...,L_n)=\Big\{(\Sigma,\beta_1,...,\beta_n)\mid \Sigma \text{ oriented hyperbolic surface},\\
&\text{ genus }\Sigma=g, \partial \Sigma=\sqcup\beta_j, 2\cosh(L_j/2)=\left|\text{tr}(A_j)\right|\Big\}/\sim
\end{align*} 
where $A_j\in PSL(2,\br)$ represents the conjugacy class defined by the holonomy of the metric around the boundary component $\beta_j$.  The quotient is by isometries preserving each $\beta_j$ (and necessarily acting on $\beta_j$ by a non-trivial rotation).  We consider only $L_j\in\br_+\cup\{0\}\cup i\hspace{.5mm} \br_+$ 
where the boundary component is geometrically realised by a geodesic, a cusp, respectively a cone angle $\theta_j=-iL_j\in\br^+$. 

The moduli space $\modm_{g,n}^{\text{hyp}}(L_1,...,L_n)$ comes equipped with a finite measure which can be constructed in different ways.  It arises as the top power of a natural symplectic form $\omega^{WP}_\mathbf{L}$, defined via the Weil-Petersson construction of a Hermitian metric or via a natural symplectic form on the character variety \cite{GolInv}, or as the Reidemeister torsion of the natural complex defined by the associated flat $PSL(2,\br)$ connection \cite{WitQua}, or via Fenchel-Nielsen coordinates when they exist \cite{WolWei}.   The total measure is defined to be the volume of the moduli space:
\begin{equation}  \label{eq:vol}
\mathrm{Vol}\big(\cM^{\text{hyp}}_{g,n}(\mathbf{L})\big)=\int_{\modm_{g,n}^{\text{hyp}}(\mathbf{L})}\exp\omega^{WP}_\mathbf{L}.
\end{equation} 
When $\mathbf{L}=(L_1,...,L_n)\in\br_{\geq 0}^n$, via recursion relations between volumes, Mirzakhani \cite{MirSim} proved that the volume is given by a symmetric polynomial in $L_1,...,L_n$
\[V^{\mathrm{Mirz}}_{g,n}(\mathbf{L}):=\text{Vol}(\modm_{g,n}^{\text{hyp}}(\mathbf{L}))\in\br[\theta_1,...,\theta_n],\quad \mathbf{L}\in\br_{\geq 0}^n.\] 

This paper extends Mirzakhani's results by calculating volumes of $\modm_{g,n}^{\text{hyp}}(\mathbf{L})$ for imaginary $\mathbf{L}\in i[0,2\pi)^n$, and producing relations between these volumes.  Via the existence of conical metrics with prescribed cone angles on any conformal surface, proven in \cite{McOPoi}, there is a natural isomorphism with the moduli space of curves $\modm_{g,n}^{\text{hyp}}(\mathbf{L})\cong\modm_{g,n}$, for any $\mathbf{L}\in i[0,2\pi)^n$.  The isomorphism is used to define a natural K\"ahler metric \cite{STrWei} on $\modm_{g,n}^{\text{hyp}}(\mathbf{L})$ which gives rise to a K\"ahler form $\omega^{WP}_{\mathbf{L}}$ on $\modm_{g,n}^{\text{hyp}}(\mathbf{L})$, with top power a finite measure.  The total measure defines the volume via \eqref{eq:vol}.  For small enough angles, specifically
\[\sum_{j=1}^n\theta_j=-i\sum_{j=1}^n L_j<2\pi,\] 
it is proven in \cite{ANoVol} that the volume as a function of $\mathbf{L}$ is given by Mirzakhani's polynomial $V^{\mathrm{Mirz}}_{g,n}(\mathbf{L})$.  For larger angles, Mirzakhani's polynomial may no longer represent the volume.  Nevertheless, for any $\mathbf{L}\in i[0,2\pi)^n$ the volume $\mathrm{Vol}\big(\cM^{\text{hyp}}_{g,n}(\mathbf{L})\big)$ can be shown to be still given by a polynomial, and in the simplest case of $n=2$ its relation to Mirzakhani's polynomials is given in the following theorem.
\begin{restatable}{theorem}{thmone}
\label{g,2 volume}
    Let $g\in\bbZ_{>0}$ and $\theta_1,\theta_2\in(0,2\pi)$ be such that $\theta_1+\theta_2> 2\pi$. Then the volume of $\cM^{\text{hyp}}_{g,2}(i\theta_1,i\theta_2)$ is a polynomial in $\theta_1,\theta_2$ satisfying
    \begin{equation*}
        \mathrm{Vol}\big(\cM^{\text{hyp}}_{g,2}(i\theta_1,i\theta_2)\big) = V^{\mathrm{Mirz}}_{g,2}(i\theta_1,i\theta_2)+\int_0^\phi V^{\mathrm{Mirz}}_{g,1}(i\theta)\cdot\theta d\theta,
    \end{equation*}
    where $\phi=\theta_1+\theta_2-2\pi$.
\end{restatable}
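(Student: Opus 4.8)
The plan is to compute the volume in the chamber $\theta_1+\theta_2>2\pi$ by analysing the single wall separating it from the maximal chamber $\theta_1+\theta_2<2\pi$, on which the volume equals $V^{\mathrm{Mirz}}_{g,2}$ by \cite{ANoVol}. Geometrically this wall is the locus $\theta_1+\theta_2=2\pi$ at which, in the Hassett stability picture with weights $a_j=\theta_j/2\pi$, the two marked points cease to be allowed to collide: for $\theta_1+\theta_2>2\pi$ the two cone points bubble off onto a rational component, so the relevant boundary stratum is the divisor $\delta_{0,\{1,2\}}$ whose generic point is a genus $g$ surface glued at a single node to a three-pointed sphere $\bbP^1$ bearing the markings $1,2$. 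I would first record that, since the volume is piecewise polynomial and continuous across walls, it suffices to compute the jump picked up on crossing this one wall; the correction is a polynomial in $\phi=\theta_1+\theta_2-2\pi$ that must vanish at $\phi=0$.

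Next I would express both volumes as integrals over the moduli space of $\exp$ of the Weil--Petersson K\"ahler class, using that $\cM^{\text{hyp}}_{g,2}(i\theta_1,i\theta_2)\cong\cM_{g,2}$. In the maximal chamber this class is $2\pi^2\kappa_1-\tfrac12(\theta_1^2\psi_1+\theta_2^2\psi_2)$ and integrates to $V^{\mathrm{Mirz}}_{g,2}(i\theta_1,i\theta_2)$. Crossing the wall modifies this class by a term supported on $\delta_{0,\{1,2\}}$, so the difference of the two volume polynomials is the integral over this divisor of the correction. Restricting to $\delta_{0,\{1,2\}}\cong\overline{\cM}_{g,1}\times\overline{\cM}_{0,3}$ and using that $\overline{\cM}_{0,3}$ is a point collapses the computation onto $\overline{\cM}_{g,1}$, where the node becomes a single cone point of angle $\theta$. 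The remaining normal (fibre) direction is parametrised by this node angle, which ranges over $[0,\phi]$: at $\theta=0$ the node is a cusp (the deepest Deligne--Mumford degeneration) and at $\theta=\phi$ all of the excess angle $\theta_1+\theta_2-2\pi$ has been transferred to the node, the original angles $\theta_1,\theta_2$ being carried by the spherical bubble.

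It then remains to identify the measure in the $\theta$-direction. The key point is that the weight is $\theta\,d\theta=d(\tfrac12\theta^2)$, exactly the differential of the coefficient $\tfrac12\theta^2$ appearing in the $\psi$-class term $-\tfrac12(i\theta)^2\psi$ of the volume class on the $\overline{\cM}_{g,1}$ factor. Integrating the genus $g$ volume against this measure over $\theta\in[0,\phi]$, and noting that throughout this range $\theta<\phi<2\pi$ keeps the genus $g$ factor inside its own maximal chamber so that its volume is $V^{\mathrm{Mirz}}_{g,1}(i\theta)$, assembles the correction into $\int_0^\phi V^{\mathrm{Mirz}}_{g,1}(i\theta)\,\theta\,d\theta$ and yields the stated formula; the vanishing of this integral at $\phi=0$ matches the continuity of the volume across the wall.

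The main obstacle is the precise determination of this wall-crossing correction to the Weil--Petersson class, equivalently the identification of the measure $\theta\,d\theta$ together with its range $[0,\phi]$. This requires controlling the conical metric as the two cone points merge: one must show that the limiting configurations are governed by a spherical cone metric on the bubble with angles $\theta_1,\theta_2$ and node behaviour determined by $\theta$ (so that the relevant existence and uniqueness conditions hold on this range), and that the Weil--Petersson form degenerates in the normal direction exactly as $d(\tfrac12\theta^2)$. Establishing this limiting behaviour rigorously, rather than the subsequent integration which is routine, is where the work lies.
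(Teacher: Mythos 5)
Your outline localizes the correction on the right divisor and lands on the right formula, but the step you yourself flag as ``the main obstacle'' --- identifying the measure $\theta\,d\theta$ on $[0,\phi]$ via degeneration of the conical Weil--Petersson metric as the two cone points merge --- is precisely the content of the theorem, and it is left unproven. Moreover, the analytic route you propose (controlling the conical metric near the wall and showing the form degenerates in the normal direction as $d(\tfrac12\theta^2)$) is not how the paper proceeds, and is much harder than necessary. The paper's proof is purely intersection-theoretic: the identity $\mathrm{Vol}=\int_{\overline{\cM}_{g,\mathbf{a}}}\exp(2\pi^2\gamma(\mathbf{a}))$ is already available in \emph{each} chamber (Schumacher--Trapani plus the compactification isomorphism of \cite{ANoVol}), and the comparison of the two chambers is effected by the Hassett reduction morphism $\pi:\overline{\cM}_{g,2}\to\overline{\cM}_{g,\mathbf{a}}$, under which Lemma~\ref{lem:pullbackgamma} gives the \emph{exact} class change $\pi^*\gamma(\mathbf{a})=\gamma(\mathbf{a})+(1-a_1-a_2)^2\delta_{0:\{1,2\}}$. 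You never derive this coefficient $(1-a_1-a_2)^2=(\phi/2\pi)^2$, and without it the correction term is undetermined.

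There is also a concrete technical gap in your reduction step: you say ``the difference of the two volume polynomials is the integral over this divisor of the correction,'' but the difference of $d$-th powers involves \emph{all} powers $\delta_{0:\{1,2\}}^k$, $k=1,\dots,d$; the first-order term alone yields only the $\phi^2\kappa_1^{d-1}\cdot\delta_{0:\{1,2\}}$ contribution. The higher self-intersections are evaluated via the normal bundle of $\delta_{0:\{1,2\}}\cong\overline{\cM}_{g,1}$, whose class is $-\psi_1$ (the $\overline{\cM}_{0,3}$ factor being a point), giving $\kappa_1^{d-k}\delta_{0:\{1,2\}}^k=(-1)^{k-1}\kappa_1^{d-k}\psi_1^{k-1}$ on $\overline{\cM}_{g,1}$, together with $\psi_j\cdot\delta_{0:\{1,2\}}=0$. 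The factor $\theta\,d\theta$ then emerges not from any metric degeneration rate but from a term-by-term binomial matching: the resulting coefficients $K_k\propto\binom{d}{k+1}\kappa_1^{d-1-k}\psi_1^k$ agree with the coefficients of $V^{\mathrm{Mirz}}_{g,1}(i\theta)$ under the linear map $\theta^{2l}\mapsto\phi^{2(l+1)}/(2(l+1))$, i.e.\ under $\int_0^\phi(\cdot)\,\theta\,d\theta$. So your geometric heuristic correctly predicts the answer, but the proof it calls for is an intersection-number computation you have not carried out. (A minor convention slip: you take $a_j=\theta_j/2\pi$ rather than the paper's $a_j=1-\theta_j/2\pi$, which flips which side of the wall permits the points to collide; under the correct convention, collision occurs for $\theta_1+\theta_2\geq2\pi$, and it is in the Deligne--Mumford model of the upper chamber that these configurations appear as bubbles along $\delta_{0:\{1,2\}}$.)
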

Theorem~\ref{g,2 volume} is an example of a {\em wall-crossing} phenomenon.  The moduli spaces of hyperbolic surfaces $\cM^{\text{hyp}}_{g,2}(i\theta_1,i\theta_2)$ are parametrised by $(\theta_1,\theta_2)\in[0,2\pi)^2$ which is partitioned into two chambers defined 
by the complement of the
wall $\theta_1+\theta_2=2\pi$.  Associated to each chamber is a polynomial, giving the volumes of the corresponding moduli spaces, and the difference between the polynomials is the wall-crossing polynomial, which arises in Theorem~\ref{g,2 volume} as an integral of a simpler volume polynomial.

Theorem~\ref{g,2 volume} is a special case of a more general wall-crossing formula which applies to any $g$ and $n$.  To state this, it will be convenient to replace the angles with linearly related parameters via the linear function
\[ a(\theta):=1-\frac{\theta}{2\pi}
\]
so that $a_j=a(\theta_j)\in(0,1]$ for all $j$.    Write $\mathbf{a}(\boldsymbol{\theta})=(a(\theta_1),...,a(\theta_n))$ for $\boldsymbol{\theta}=(\theta_1,...,\theta_n)\in[0,2\pi)^n$. This produces the following space of parameters for the moduli spaces $\modm_{g,n}^{\text{hyp}}(i\theta_1,...,i\theta_n)$:
\[ \cd_{g,n} :=\{\mathbf{a}\in(0,1]^n\mid\sum_1^n a_j>2-2g\}.\]
The condition $\sum a_j>2-2g$ is automatically satisfied when $g>0$.  It is necessary when $g=0$, to ensure the existence of a hyperbolic metric which requires an upper bound $\sum_{j=1}^n\theta_j<2\pi(n-2)=2\pi|\chi|$.   

For each $J\subset\mathbf{n}=\{1,...,n\}$ with $|J|>1$ define a {\em wall} $W_J\subset\cd_{g,n}$ by
\[W_J=\{\mathbf{a}\in\cd_{g,n}\mid\sum_{j\in J}a_j=1\}. \]
The components of $\cd_{g,n}\setminus\cup_JW_J$ are {\em chambers} of $\cd_{g,n}$.  A chamber $\cc$ is defined to be incident to a wall $W_J$ if $\overline{\cc}\cap W_J\subset\overline{\cc}$ is of codimension one. 
By crossing the wall, this determines a unique chamber $\cc'\neq\cc$ also incident to $W_J$ and a path from $\cc$ to $\cc'$ is a {\em simple} wall-crossing.  If $\sum_{j\in J}a_j>1$ on $\cc$, so that $\sum_{j\in J}a_j<1$ on $\cc'$, then we say the chamber $\cc$ is {\em above} the wall $W_J$, and write $\cc>\cc'$.   Simple wall-crossings induce a partial ordering on the collection of chambers of $\cd_{g,n}$.  There is a unique maximal chamber $\cc^\mathrm{M}\subset\cd_{g,n}$ which we call the {\em main} chamber.  For $g>0$ there is a unique minimal chamber $\cc^\mathrm{L}\subset\cd_{g,n}$, while for $g=0$ one can associate a minimal chamber $\cc_j\subset\cd_{0,n}$, defined in \eqref{ming=0}, for each $j\in\{1,...,n\}$.  Clearly $\cc^\mathrm{M}=\cc^\mathrm{L}$ when $n=1$. Theorem~\ref{g,2 volume} gives a wall-crossing formula across the wall $W_{\hspace{-.5mm}\{1,2\}}$ separating $\cd_{g,2}$ into $\cc^\mathrm{M}$ and $\cc^\mathrm{L}$.

It is shown in Section~\ref{sec:hass} that $\mathrm{Vol}\big(\cM^{\text{hyp}}_{g,n}(i\boldsymbol{\theta})\big)\in\br[\theta_1,...,\theta_n]$ in each chamber $\cc\subset\cd_{g,n}$.  Denote this volume by $V_{g,\cc}(i\boldsymbol{\theta})$, so for $\mathbf{a}(\boldsymbol{\theta})\in\cc$
\[ V_{g,\cc}(i\boldsymbol{\theta})=\mathrm{Vol}\big(\cM^{\text{hyp}}_{g,n}(i\boldsymbol{\theta})\big)\in\br[\theta_1,...,\theta_n].
\]
 The polynomial $V_{g,\cc}$ is defined via algebraic geometry in Definition~\ref{def:vol}. 
We have
\[ V_{g,\cc^\mathrm{M}}=V^{\mathrm{Mirz}}_{g,n}.
\]
It is this comparison with Mirzakhani's polynomials that explains the use of $i\boldsymbol{\theta}$ instead of the more natural $\boldsymbol{\theta}$.  The collection of polynomials $V_{g,\cc}(i\boldsymbol{\theta})$ forms a piecewise polynomial function on $\cd_{g,n}$ which varies continuously and differentiably across walls.   Given a chamber $\cc\subset\cd_{g,n}$ incident to and above a wall $W_S$, hence uniquely determining $\cc'$ incident to and below $W_S$, we define the wall-crossing polynomial $\wc_{\cc,S}\in\br[\theta_1,...,\theta_n]$ by
\[ 
V_{g,\cc'}(i\boldsymbol{\theta}) = V_{g,\cc}(i\boldsymbol{\theta})+\wc_{\cc,S}(\boldsymbol{\theta}).
\]
 
For $S\subset\mathbf{n}$, $\boldsymbol{\theta}_S\in[0,2\pi)^S$ is the restriction of $\boldsymbol{\theta}\in[0,2\pi)^n$ and $S^c$ is the complement of $S$ in $\mathbf{n}$.   For any wall $W_S\subset\cd_{g,n}$, there is a natural quotient construction that sends any chamber $\cc\subset\cd_{g,n}$ to a quotient chamber $\cc/S\subset\cd_{g,n-|S|+1}$, defined precisely in \eqref{quotient}.

\begin{restatable}{theorem}{thmtwo}
\label{wall-crossing formula}
Let $\cc\subset\cd_{g,n}$ be incident to and above a wall $W_S$.  Then
\begin{equation} \label{WCF}
\wc_{\cc,S}(\boldsymbol{\theta})=\int_{0}^{\phi_S} V_{g,\cc/S}(i\boldsymbol{\theta}_{S^c},i\theta)\cdot V_{0,\cc_1}(i\theta,i\boldsymbol{\theta}_{S})  \cdot \theta \cdot d\theta
\end{equation}
where $\phi_S=\sum_{j\in S}\theta_j-2\pi(|S|-1)$ and $\cc_1$ is the minimal chamber corresponding to the entry $i\theta$.
\end{restatable}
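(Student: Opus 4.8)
The plan is to realise each chamber volume as an intersection number on the corresponding Hassett space and to compare the two sides of the wall through the reduction morphism between these spaces. By Definition~\ref{def:vol} the polynomial $V_{g,\cc}(i\boldsymbol{\theta})$ is the integral over the Hassett compactification $\overline{\cM}_{g,\mathbf{a}(\cc)}$ of $\exp\Omega_{\cc}$, where $\Omega_{\cc}=2\pi^{2}\kappa_{1}-\tfrac12\sum_{j}\theta_{j}^{2}\psi_{j}$ with $\kappa_{1}$ and the $\psi_{j}$ taken on that space. Since $\cc$ lies above $W_{S}$ and $\cc'$ below, the weights drop across $S$ and Hassett's theory provides a birational reduction morphism $\rho\colon\overline{\cM}_{g,\mathbf{a}(\cc)}\to\overline{\cM}_{g,\mathbf{a}(\cc')}$ that contracts precisely the $S$-bubble. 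Its exceptional locus is the boundary divisor $D_{S}$ parametrising curves with a genus-zero component carrying the markings in $S$ and one node, glued to a genus-$g$ component carrying the markings in $S^{c}$; this is the only stratum whose stability changes across $W_{S}$, and away from $D_{S}$ the morphism $\rho$ is an isomorphism.

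First I would convert $V_{g,\cc'}-V_{g,\cc}$ into a single integral over $\overline{\cM}_{g,\mathbf{a}(\cc)}$ by pulling the $\cc'$-integrand back along $\rho$ and invoking the projection formula, which is legitimate as $\rho$ has degree one. The two integrands agree wherever $\rho$ is an isomorphism, so their difference is supported on $D_{S}$: concretely, the comparison formulas for $\psi$- and $\kappa$-classes under a Hassett reduction write $\rho^{*}\psi_{j}^{\cc'}$ (for $j\in S$) and $\rho^{*}\kappa_{1}^{\cc'}$ as the corresponding $\cc$-classes corrected by terms proportional to $[D_{S}]$. Expanding the exponential, every surviving contribution to $\rho^{*}\exp\Omega_{\cc'}-\exp\Omega_{\cc}$ therefore carries a factor of $[D_{S}]$, and the volume difference reduces to an integral over $D_{S}$.

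Next I would evaluate that integral using the product decomposition $D_{S}\cong\overline{\cM}_{0,S\cup\{\bullet\}}\times\overline{\cM}_{g,S^{c}\cup\{\star\}}$, where $\bullet,\star$ are the two branches of the node. After restriction the integrand factorises by K\"unneth: assigning the node the cone angle $\theta$, the genus-$g$ factor integrates to $V_{g,\cc/S}(i\boldsymbol{\theta}_{S^{c}},i\theta)$ and the genus-zero factor to $V_{0,\cc_{1}}(i\theta,i\boldsymbol{\theta}_{S})$, the quotient chamber $\cc/S$ (as in \eqref{quotient}) and the minimal chamber $\cc_{1}$ associated with the node entry (as in \eqref{ming=0}) being forced by which side of each remaining wall the restricted classes fall on. The smoothing of the node contributes the residual modulus, whose Weil-Petersson measure in Fenchel-Nielsen-type coordinates at a cone point is $\theta\,d\theta$---the angle playing the role of Mirzakhani's length-twist measure $\ell\,d\ell$---while the range of admissible node cone angle, governed by the angle-sum (Gauss-Bonnet) inequality on the two pieces, supplies the upper limit $\phi_{S}=\sum_{j\in S}\theta_{j}-2\pi(|S|-1)$. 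Collecting these factors gives exactly \eqref{WCF}.

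I expect the second step to be the main obstacle: pinning down the exact boundary corrections in the class comparison and verifying that, after restriction to $D_{S}$ and integration over the node, they reassemble into the product $V_{g,\cc/S}\cdot V_{0,\cc_{1}}\cdot\theta$ with no stray terms and with the correct limits $0$ and $\phi_{S}$. A clean consistency check is the case $n=2$, $S=\mathbf{n}$, where the genus-zero factor is $\overline{\cM}_{0,3}$ so that $V_{0,\cc_{1}}\equiv 1$ and the formula collapses to $\int_{0}^{\phi}V^{\mathrm{Mirz}}_{g,1}(i\theta)\,\theta\,d\theta$; recovering Theorem~\ref{g,2 volume} in this way simultaneously confirms the node measure $\theta\,d\theta$ and the upper limit $\phi_{S}$.
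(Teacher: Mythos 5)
Your strategy coincides with the paper's proof up to the localization step: both realise the two volumes as top intersections of $\exp(2\pi^2\gamma(\b{a}))$, pull back along the reduction morphism, and use $\pi^*\gamma(\b{a})=\gamma(\b{a})+(1-\sum_{j\in S}a_j)^2\delta_{0:S}$ (Lemma~\ref{lem:pullbackgamma}) to conclude that every term in the difference carries a factor of the exceptional divisor $\delta_{0:S}$ (which, incidentally, is the correct notation here --- $D_S$ denotes the colliding locus on the lower-chamber side). The genuine gap is at exactly the point you flag as the main obstacle, and the mechanism you propose to close it would fail. The binomial expansion of $\big(\gamma(\b{a})+(1-\sum_{j\in S}a_j)^2\delta_{0:S}\big)^d-\gamma(\b{a})^d$ contains all powers $\delta_{0:S}^k$ for $1\le k\le d$, and the terms with $k\ge 2$ must be evaluated by restricting to $\delta_{0:S}$ and converting each excess factor into the first Chern class of the normal bundle, namely $-(\psi_\alpha+\psi_\beta)$, where $\psi_\alpha,\psi_\beta$ are the cotangent classes at the two branches of the node. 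Your proposal never mentions the normal bundle, yet it is precisely the binomial distribution of the powers $(\psi_\alpha+\psi_\beta)^{k-1}$ between the two K\"unneth factors that generates the $\theta$-dependence of the integrand; converting the resulting finite sum of intersection numbers into the form $\int_0^{\phi_S}V_{g,\cc/S}\cdot V_{0,\cc_1}\cdot\theta\,d\theta$ then requires the nontrivial coefficient identity that occupies the bulk of the paper's proof (the verification $K_k=K_k'$).

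Your substitute --- that the node carries a cone angle $\theta$ whose smoothing contributes Weil--Petersson measure $\theta\,d\theta$ in Fenchel--Nielsen-type coordinates --- is not available in this framework, and a dimension count shows it is not even heuristically an integration over a modulus: $\dim\delta_{0:S}=d-1$ already equals $\dim\Mbar{g}{\cc/S}+\dim\overline{\cM}_{0,\cc_1}=(d-|S|+1)+(|S|-2)$, so the K\"unneth factorization saturates the dimension and leaves no residual one-dimensional node parameter to integrate. The variable $\theta$ in \eqref{WCF} is formal: nodes of the compactified hyperbolic surfaces are cusps, not cone points of angle $\theta$, no Wolpert-type $\ell\,d\ell\wedge d\tau$ statement is known for cone points, and the paper explicitly presents the bubbling picture as one that ``anticipates the proof'' rather than constituting it. Note also that your consistency check via Theorem~\ref{g,2 volume} does not independently confirm the measure $\theta\,d\theta$, since the paper's proof of that case already relies on the same missing ingredient, the identification $\delta_{0:\{1,2\}}^{\,k}\mapsto(-\psi_1)^{k-1}$ on $\overline{\cM}_{g,1}$ coming from the normal bundle.
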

The polynomials $V_{0,\cc_1}(i\theta,i\boldsymbol{\theta}_{S})$ are calculated explicitly in Proposition~\ref{lem:lightchamber} leading to the more explicit formula \eqref{wckernel}. 
A useful geometric viewpoint of Theorem~\ref{wall-crossing formula}, which anticipates the proof, is that of a nodal degeneration of a hyperbolic surface where the cone angles $\boldsymbol{\theta}_{S}$ coalesce to form a genus 0 cuspidal surface that bubbles off. 

 When $|S|=2$, $V_{0,\cc_1}(i\theta,i\boldsymbol{\theta}_{S})=1$ and if also $n=2$ then $V_{g,\cc/S}(i\boldsymbol{\theta}_{S^c},i\theta)=V_{g,\cc^\mathrm{M}}(i\theta)$ and Theorem~\ref{wall-crossing formula} specialises to Theorem~\ref{g,2 volume}.

Via relations between intersection numbers on $\overline{\modm}_{g,n}$, the polynomials $V^{\text{Mirz}}_{g,n}(\mathbf{L})$ were proven in \cite{DNoWei} to satisfy the following relations:
\begin{subequations} \label{limitdil}
\begin{gather}
V^{\text{Mirz}}_{g,n+1}({\bf L},2\pi i)=\sum_{k=1}^n\int_0^{L_k}L_kV^{\text{Mirz}}_{g,n}({\bf L})dL_k, \label{limitdil1}\\
\frac{\partial V^{\text{Mirz}}_{g,n+1}}{\partial L_{n+1}}({\bf L},2\pi i)=2\pi i(2g-2+n)V^{\text{Mirz}}_{g,n}({\bf L}).\label{limitdil2}
\end{gather}
\end{subequations}
These results generalise to volume polynomials outside of the main chamber as follows.   We write $\theta_j\to 2\pi$, equivalently $a_j\to0$, to mean a path in $\cd_{g,n}$ along which walls are crossed only from above to below.  We say that a chamber is \emph{flat in the $i$th coordinate} if there is a path $\theta_i\to 2\pi$ along which only walls $W_S$ with $|S|=2$ are crossed, and \emph{light in the $i$th coordinate} if there is a path $\theta_i\to 2\pi$ along which no walls are crossed---see Definitions~\ref{defn:flat} and \ref{defn:light}.
Let $\cc\subset\cd_{g,n+1}$ be light in the $(n+1)$th coordinate. Then
\begin{equation}\label{eqn:incidentzero}
V_{g,\cc}(i\theta_1,\dots,i\theta_{n},2\pi i)=0.
\end{equation}
This is a consequence of a degeneration of the Weil-Petersson form in the $2\pi$ limit, and also is proven via an algebro-geometric argument which appears as Lemma~\ref{Thm:2pilimits}.
Let  $\cb\subset\cd_{g,n+1}$ be any chamber and $\{\cb_j,S_j\}_{j=1}^k$ be any series of simple wall-crossings from $\cb=\cb_1$ to a chamber $\cb_{k+1}$ that is light in the $(n+1)$th coordinate.  A consequence of Theorem~\ref{wall-crossing formula} and \eqref{eqn:incidentzero} is the following equality:
\begin{equation}\label{eqn:incidentzerowc}
V_{g,\cc}(i\theta_1,\dots,i\theta_n,2\pi i)=-\sum_{j=1}^k\wc_{\cb_j,S_j}(\theta_1,\dots,\theta_n,2\pi )
\end{equation}
obtained by applying \eqref{WCF} multiple times to \eqref{eqn:incidentzero}.  In particular, when applied to the main chamber $\cb=\cc^M$ this produces the relation \eqref{limitdil1}.  An understanding of  \eqref{limitdil1} via a $2\pi$ cone angle limit was unknown previously, and this proof via a degeneration of the volume form in the $2\pi$ cone angle limit combined with a sequence of wall-crossings gives such a viewpoint.

Given a chamber $\cc\subset\cD_{g,n+1}$, there is a unique chamber $\cc|_{\b{n}}$ obtained by forgetting the $(n+1)$th point, defined precisely in \eqref{restriction}. 
\begin{restatable}{theorem}{thmfour}
 \label{Thm:Dilaton}
Let $\cc\subset\cd_{g,n+1}$ be flat in the $(n+1)$th coordinate. Then
\begin{equation}\label{eqn:derivative}
\frac{\partial V_{g,\cc}}{\partial\theta_{n+1}}(i\theta_1,\dots,i\theta_n,2\pi i)=-2\pi \big(2g-2+|q(\cc)|+\sum_{j\notin q(\cc)} a(\theta_j)\big)V_{g,\cc|_{\b{n}}}(i\theta_1,\dots,i\theta_n)
\end{equation}
where $q(\cc)=\{j\mid W_{\{j,n+1\}}\text{ is crossed as }\theta_{n+1}\to2\pi\}$.
\end{restatable}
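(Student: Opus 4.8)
The plan is to prove \eqref{eqn:derivative} by induction on $|q(\cc)|$, descending across the two-element walls $W_{\{i,n+1\}}$ by means of Theorem~\ref{wall-crossing formula}, and taking the light chambers as the base case. Since $\cc$ is flat in the $(n+1)$th coordinate, the path obtained by fixing $\theta_1,\dots,\theta_n$ and letting $\theta_{n+1}\to2\pi$ crosses precisely the walls $W_{\{j,n+1\}}$ with $j\in q(\cc)$, each at $\theta_j+\theta_{n+1}=2\pi$, and terminates in a chamber $\cc_0$ that is light in the $(n+1)$th coordinate and satisfies $\cc_0|_{\b n}=\cc|_{\b n}$. This presents $\cc$ as the top of a chain of simple wall-crossings, across two-element walls involving $n+1$ only, along which the restriction to $\b n$ is constant; the induction runs down this chain.

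For the inductive step I would take $\cc$ incident to and above $W_{\{i,n+1\}}$, with $\cc'$ the chamber below, so that $q(\cc)=q(\cc')\sqcup\{i\}$ and $\cc|_{\b n}=\cc'|_{\b n}$. Applying Theorem~\ref{wall-crossing formula} with $S=\{i,n+1\}$, where $V_{0,\cc_1}\equiv1$ and $\phi_S=\theta_i+\theta_{n+1}-2\pi$, gives
\[
V_{g,\cc}(i\boldsymbol\theta)=V_{g,\cc'}(i\boldsymbol\theta)-\int_0^{\theta_i+\theta_{n+1}-2\pi}V_{g,\cc/S}(i\boldsymbol\theta_{S^c},i\theta)\,\theta\,d\theta.
\]
The only dependence on $\theta_{n+1}$ on the right is through the upper limit, so differentiating and evaluating at $\theta_{n+1}=2\pi$ (where $\phi_S=\theta_i$) yields
\[
\frac{\partial V_{g,\cc}}{\partial\theta_{n+1}}\Big|_{2\pi}=\frac{\partial V_{g,\cc'}}{\partial\theta_{n+1}}\Big|_{2\pi}-\theta_i\,V_{g,\cc/\{i,n+1\}}(i\boldsymbol\theta_{S^c},i\theta_i).
\]
The key point is the identification $V_{g,\cc/\{i,n+1\}}(i\boldsymbol\theta_{S^c},i\theta_i)=V_{g,\cc|_{\b n}}(i\boldsymbol\theta)$, which I would extract from the definitions \eqref{quotient} and \eqref{restriction}: at $\theta_{n+1}=2\pi$ the angle-$2\pi$ point is smooth and contributes nothing, so the bubble carrying $i$ and $n+1$ degenerates to a node inheriting the angle $\theta_i$, reproducing the surface with the $(n+1)$th point forgotten. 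Granting this and the inductive hypothesis for $\cc'$, the result for $\cc$ follows since passing from $q(\cc')$ to $q(\cc')\sqcup\{i\}$ increases the bracket in \eqref{eqn:derivative} by $1-a(\theta_i)=\theta_i/2\pi$, which multiplied by $-2\pi V_{g,\cc|_{\b n}}$ is exactly the extra term $-\theta_i V_{g,\cc|_{\b n}}$.

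The base case, and the main obstacle, is the dilaton equation for a light chamber:
\[
\frac{\partial V_{g,\cc}}{\partial\theta_{n+1}}(i\theta_1,\dots,i\theta_n,2\pi i)=-2\pi\big(2g-2+\textstyle\sum_{j=1}^n a(\theta_j)\big)V_{g,\cc|_{\b n}}(i\theta_1,\dots,i\theta_n),\qquad q(\cc)=\emptyset.
\]
I would prove this from the intersection-theoretic definition of $V_{g,\cc}$ in Definition~\ref{def:vol} on the Hassett space $\overline{\modm}_{g,n+1}(\mathbf a)$, adapting the proof of \eqref{limitdil2} from \cite{DNoWei}. Differentiating the volume in $\theta_{n+1}$ brings down $-\theta_{n+1}\psi_{n+1}$, and at $\theta_{n+1}=2\pi$ the residual $-2\pi^2\psi_{n+1}$ combines with $2\pi^2\kappa_1$; pushing forward along the weighted reduction morphism $\pi$ that contracts the light point then collapses the integrand to that of $\cc|_{\b n}$. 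The delicate input is the weighted pushforward $\pi_*\psi_{n+1}=2g-2+\sum_{j=1}^n a(\theta_j)$, the analogue of $\kappa_0=2g-2+n$, together with controlling the boundary corrections in the relations $\kappa_1=\pi^*\kappa_1+\psi_{n+1}$ and $\psi_j=\pi^*\psi_j+(\text{boundary})$ on the Hassett space. Alternatively, as with \eqref{eqn:incidentzero}, this base case should follow analytically from the degeneration of the Weil-Petersson form $\omega^{WP}_{\mathbf L}$ as $\theta_{n+1}\to2\pi$.
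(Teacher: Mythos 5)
Your strategy is genuinely different from the paper's. The paper proves the theorem in one stroke for \emph{every} flat chamber: it pulls back $\gamma_{\cc'}(\b{a})$ along the flat forgetful morphism via Lemma~\ref{lem:pullbackgamma}, reads off the coefficient of $a_{n+1}$ in $\gamma_{\cc}(\b{a})^d$, kills the $\delta_{0:\{j,n+1\}}$ cross-terms using $\psi_{n+1}\cdot\delta_{0:\{j,n+1\}}=D_{k,n+1}\cdot\delta_{0:\{j,n+1\}}=0$, and evaluates against the fibre class $B$ of $\varphi$ using $B\cdot\psi_{n+1}=2g-2+|q(\cc)|$ and $B\cdot D_{i,n+1}=1$. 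You instead telescope down the chain of simple wall-crossings $W_{\{j,n+1\}}$, $j\in q(\cc)$, to a light chamber and take the light case as base. This is viable, and indeed the paper itself later runs your mechanism in the opposite direction: Corollary~\ref{cor:wallderiv} with $|S|=2$ is exactly your differentiated wall-crossing term, and \eqref{eqn:derivativegeneral} with Corollary~\ref{cor:generalchamber} is the same telescoping. What your route buys is a conceptual reduction of the flat case to the light case plus Theorem~\ref{wall-crossing formula}; what the paper's route buys is uniformity, since it never needs the base case as a separate input.

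Two points in your write-up need repair before the argument closes. First, your key identification $V_{g,\cc/\{i,n+1\}}=V_{g,\cc|_{\b{n}}}$ amounts to the chamber identity $\cc/\{i,n+1\}=\cc|_{\b{n}}$, which is \emph{false} for general chambers: by \eqref{quotient} the quotient assigns $1$ to every $\bar J$ containing the merged point, while $\cc|_{\b{n}}(J')=\cc(J')$ may be $0$ for some $J'\ni i$. It holds here precisely because $\cc$ is flat and above $W_{\{i,n+1\}}$: if $J'\subset\b{n}$, $|J'|\ge 2$, $i\in J'$ and $\cc(J')=0$, flatness forces $\cc(J'\cup\{n+1\})=0$, i.e.\ $\sum_{j\in J'}a_j+a_{n+1}\le 1$, contradicting $a_i+a_{n+1}>1$; hence $\cc(J')=1$ for all such $J'$ and the two chambers agree. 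Your bubbling heuristic does not record where flatness enters, and it is essential (note also that each lower chamber in your chain inherits flatness, since only the value at $\{i,n+1\}$ changes, so the induction is licit). Second, your base case is not independent of the paper's computation --- it is its $q(\cc)=\emptyset$ instance --- and the ``weighted pushforward $\pi_*\psi_{n+1}=2g-2+\sum_{j=1}^n a(\theta_j)$'' is imprecise as stated: for a light point one has $\varphi_*\psi_{n+1}=2g-2$ (equivalently $B\cdot\psi_{n+1}=2g-2$), and the weights enter through the term $2\sum_j a_jD_{j,n+1}$ produced when $\gamma(\b{a})$ is differentiated in $a_{n+1}$; the correct identity is $\varphi_*\bigl(\psi_{n+1}+\sum_j a_jD_{j,n+1}\bigr)=2g-2+\sum_j a_j$, each $D_{j,n+1}$ being a section, and flatness of $\varphi$ is still needed to write $(\varphi^*\gamma_{\cc'}(\b{a}))^{d-1}$ as $\deg[\gamma_{\cc'}(\b{a})^{d-1}]\,[B]$. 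With these two repairs your proof goes through.
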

For the main chamber $\cc^M$, $V_{g,\cc^M}=V^{\text{Mirz}}_{g,n+1}$ and $V_{g,\cc^M|_{\b{n}}}=V^{\text{Mirz}}_{g,n}$, and $|q(\cc^M)|=n$, so that \eqref{eqn:derivative} coincides with the second equation of \eqref{limitdil}.  When $\cc$ is light in the $(n+1)$th coordinate, so that $q(\cc^M)=\emptyset$, then the factor $-2\pi (2g-2+\sum a(\theta_j))$ equals the negative of the area of each fibre in the universal curve.
Equation \eqref{eqn:derivative} is a relation between the volume polynomials in any chamber flat in the $(n+1)$th coordinate.  If instead we consider only chambers light in the $(n+1)$th coordinate then  \eqref{eqn:derivative} essentially gives the derivative of the piecewise polynomials $\mathrm{Vol}\big(\cM^{\text{hyp}}_{g,n}(i\boldsymbol{\theta})\big)$ which was proven previously in \cite[(1.3b)]{ETu2Dd}. In Corollary~\ref{cor:generalchamber} we use wall-crossing polynomials to obtain a generalisation of \eqref{eqn:derivative} that holds for certain chambers that are not necessarily flat in the $(n+1)$th coordinate.

 The proofs of Theorems~~\ref{wall-crossing formula} and \ref{Thm:Dilaton} allow the existence of boundary geodesics, provided the lengths of the added boundary components are sufficiently small (see \cite[Lemma 2.3]{ANoVol}).  Each boundary geodesic is assigned the weight of a cusp in order to determine the appropriate chamber.  The same convention associates Mirzakhani's polynomial for geodesic boundary lengths to the main chamber.  


  
The space of parameters $\cd_{g,n}$ coincides with Hassett's space of stability conditions \cite{HasMod} on the moduli space of pointed curves $\modm_{g,n}$.   Hassett considers rational parameters in order to work over any field of characteristic zero, but it easily extends to real parameters.  Hassett uses the stability conditions to define a condition on nodal curves known as $\mathbf{a}$-stability that defines $\mathbf{a}$-stable curves, which coincides with usual stable curves  for $\mathbf{a}=1^n$.  The set of $\mathbf{a}$-stable curves leads to a compactification  $\overline{\modm}_{g,\mathbf{a}}$ depending only on the chamber $\cc$ containing $\mathbf{a}$, and generalising the Deligne-Mumford compactification.   
The association of moduli spaces of cone hyperbolic surfaces to Hassett's space of stability conditions arises by considering a natural compactification $\overline{\modm}_{g,n}^{\text{hyp}}(i\boldsymbol{\theta})$ obtained by including nodal hyperbolic surfaces with cusps at nodes. For $\mathbf{a}=\mathbf{a}(\boldsymbol{\theta})$, the compactification is proven in \cite{ANoVol} to be isomorphic to Hassett's compactification 
\begin{equation} \label{Hasiso} \overline{\modm}_{g,n}^{\text{hyp}}(i\boldsymbol{\theta})\cong\overline{\modm}_{g,\mathbf{a}}.\end{equation}
By relating volumes of the smooth moduli space $\modm_{g,n}^{\text{hyp}}(i\boldsymbol{\theta})$ to cohomology classes on the natural compactification $\overline{\modm}_{g,n}^{\text{hyp}}(i\boldsymbol{\theta})$, we can calculate volumes via intersection theory on Hassett's compactifications $\overline{\modm}_{g,\mathbf{a}}$.

In Section~\ref{sec:hass} we write the volumes of moduli spaces of conical hyperbolic surfaces in terms of intersection numbers over Hassett's compactifications and in particular show the piecewise polynomial behaviour of the volumes.  The proofs of Theorems~\ref{g,2 volume},~\ref{wall-crossing formula} and~\ref{Thm:Dilaton} are given in Section~\ref{main}.  In Section~\ref{sec:ex} we calculate volumes in genus $0$ for particular chambers, in particular we give explicit formulae for the volume polynomials $V_{\cc_1}$ appearing in \eqref{WCF}, and also volumes of Losev-Manin Spaces as well as the volumes of all chambers $\mathrm{Vol}\big(\cM^{\text{hyp}}_{g,n}(i\boldsymbol{\theta})\big)$ for $(g,n)=(0,4),(0,5)$ and $(1,2)$.  In many of the computations included in this section, the three main theorems are necessary, and this section serves to highlight how they can be used in practice.


\textit{Acknowledgements.} Research of LA is supported by an Australian Government Research Training Program Scholarship and an Elizabeth \& Vernon Puzey Scholarship. Research of SM is supported by the ERC Advanced Grant ``SYZYGY'' and the DECRA Grant DE220100918 from the Australian Research Council.  Research of PN is supported under the Australian Research Council Discovery Projects funding scheme project number DP180103891.

\section{Compactifications and intersection theory}   \label{sec:hass}

There is a natural compactification of $\cM^{\text{hyp}}_{g,n}(i\boldsymbol{\theta})$ by nodal hyperbolic surfaces with cusps at nodes.  Via algebro-geometric methods, Hassett \cite{HasMod} produced compactifications $\overline{\modm}_{g,\mathbf{a}}$ of $\cM_{g,n}$ using weighted stable curves depending on weights $\mathbf{a}=(a_1,...,a_n)\in(0,1]^n$.  For $a=a(\theta_i)$, the compactifications by nodal hyperbolic surfaces and those by weighted stable curves turn out to be homeomorphic.  This allows one to study volumes of $\cM^{\text{hyp}}_{g,n}(i\boldsymbol{\theta})$ via intersection theory on $\overline{\modm}_{g,\mathbf{a}}$.

\subsection{Hassett's space of stability conditions}\label{sec:Hass}
The space of parameters 
\[ \cd_{g,n} :=\{\bold{a}\in(0,1]^n\mid\sum a_i>2-2g\}\]
which represents cone angles via $a_i=a(\theta_i)$
is used by Hassett \cite{HasMod} to define stability conditions which governs a collection of permitted nodal curves used to produce alternative compactifications of $\cM_{g,n}$ similar to the Deligne-Mumford compactification.  On any smooth pointed curve $C$, the parameters define a weighted Euler characteristic
\[\chi_{g,\bold{a}}(C)= 2-2g(C)-\sum_{j=1}^na_j.\]
The following condition on any nodal curve, known as $\mathbf{a}$-stability, coincides with the  Deligne-Mumford condition of stabilty for $\mathbf{a}=1^n$.
\begin{definition}\cite{HasMod}   \label{astable}
For $\mathbf{a}\in\cd_{g,n}$, a pointed nodal curve $(C,p_1,...,p_n)$  is $\mathbf{a}$-stable when the points $p_1,...,p_n$ lie in the smooth locus of $C$,  
$\chi_{g,\bold{a}}(C')<0$ for each irreducible component $C'\to C$, 
and if $\{p_j=p\mid j\in J\}$ collide then $\displaystyle\sum_{j\in J}a_j\leq 1$.  
\end{definition}
Note that we define an irreducible component $C'\to C$ to mean a smooth connected component of the normalisation of $C$, the weights $\mathbf{a}=(a_1,...,a_n)$ apply only to the points $p_j\in C'$, and nodes are given weight 1.
\begin{thrm}\cite{HasMod}
The set $\overline{\modm}_{g,\mathbf{a}}$ of all  $\mathbf{a}$-stable curves defines a compactification  of $\cM_{g,n}$.
\end{thrm}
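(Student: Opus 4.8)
The plan is to unpack the word ``compactification'' into three assertions and establish each: that the moduli problem of $\mathbf{a}$-stable curves is represented by a separated Deligne--Mumford stack of finite type (with projective coarse space), that $\cM_{g,n}$ embeds as a dense open substack, and that the total space is proper. The openness and density are the quickest to dispatch. A smooth irreducible genus $g$ curve with $n$ \emph{distinct} marked points has $\chi_{g,\mathbf{a}}=2-2g-\sum_j a_j$, which is negative precisely under the defining inequality $\sum_j a_j>2-2g$ of $\cd_{g,n}$; hence every such curve is $\mathbf{a}$-stable, and the locus of smooth curves with distinct points is open and dense in any flat family of $\mathbf{a}$-stable curves by the usual openness of smoothness and of transversality of sections.

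For representability I would first check that $\mathbf{a}$-stability forces finite automorphism groups: on each component $C'\to C$ the condition $\chi_{g,\mathbf{a}}(C')<0$, read with node-branches counted as weight-one points, rigidifies $C'$ exactly as in the Deligne--Mumford case, ruling out positive-dimensional automorphisms. Combined with the standard deformation theory of nodal curves with marked points (unobstructed in the relevant sense, with tangent--obstruction controlled by the appropriate $\mathrm{Ext}$ groups), this shows the stack is Deligne--Mumford and of finite type. Separatedness reduces, like properness, to a valuative uniqueness statement, handled below.

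The substantial step is properness, which I would verify via the valuative criterion over a discrete valuation ring $R$ with fraction field $K$. Starting from an $\mathbf{a}$-stable curve over $\Spec K$, the strategy is: after a finite base change, invoke ordinary Deligne--Mumford stable reduction to obtain a stable limit $(\overline{\mathcal C}\to\Spec R,\ \sigma_1,\dots,\sigma_n)$ with DM-stable special fibre and \emph{disjoint} sections; then construct a contraction $c:\overline{\mathcal C}\to\mathcal C$ over $\Spec R$ whose central fibre is $\mathbf{a}$-stable. The contraction is the morphism attached to a sufficiently divisible power of the relative log-canonical $\mathbb{Q}$-line bundle $\omega_{\overline{\mathcal C}/R}(\sum_j a_j\sigma_j)$: components of the special fibre on which this bundle has non-positive degree are precisely those violating $\chi_{g,\mathbf{a}}<0$, and they get collapsed, while sections carrying total weight $\sum_{j\in J}a_j\le 1$ are permitted to coincide in the image. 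One then checks that $\mathcal C\to\Spec R$ is flat with $\mathbf{a}$-stable fibres and that the generic fibre is unchanged.

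The main obstacle I anticipate is exactly this contraction step: showing the linear system genuinely defines a morphism (semiampleness of the twisted bundle on the special fibre), that the central fibre of $\mathcal C$ is truly $\mathbf{a}$-stable rather than merely semistable, and above all that the limit is \emph{unique}, so that properness and separatedness follow simultaneously. Uniqueness is best argued by noting that any two $\mathbf{a}$-stable limits are dominated by a common Deligne--Mumford stable model, and that the contraction above is canonically determined by the $\mathbb{Q}$-line bundle and hence independent of choices. The delicate point to verify is the weight bookkeeping when several marked points collide, which is most sensitive in the $g=0$ boundary cases where the inequality $\sum_j a_j>2$ is tight.
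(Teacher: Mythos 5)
This theorem is not proven in the paper at all: it is imported verbatim from \cite{HasMod}, so the only meaningful comparison is with Hassett's original argument. Your proposal essentially reconstructs that argument: openness and density of the locus of smooth curves with distinct points, a Deligne--Mumford stack of finite type via the finiteness of automorphisms forced by $\chi_{g,\mathbf{a}}(C')<0$ and standard deformation theory, and properness via the valuative criterion, producing the limit by contracting a Deligne--Mumford stable model along the relative log-canonical $\mathbb{Q}$-line bundle $\omega_{\overline{\mathcal{C}}/R}\bigl(\sum_j a_j\sigma_j\bigr)$, with uniqueness coming from the canonicity of this log-canonical model. That is Hassett's route, and your anticipated obstacle (semiampleness of the twisted bundle) is in fact unproblematic in relative dimension one: on a DM-stable special fibre the twisted bundle has negative degree only on rational tails of total weight $<1$, tails of weight exactly $1$ have degree zero and are also collapsed, and contracting these is precisely what produces the permitted collided points, so the central fibre comes out genuinely $\mathbf{a}$-stable.

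There is, however, one concrete gap in your properness step. You propose to ``invoke ordinary Deligne--Mumford stable reduction to obtain a stable limit \ldots with disjoint sections,'' but an $\mathbf{a}$-stable curve over $\Spec K$ may already have coincident marked points whenever $\sum_{j\in J}a_j\leq 1$, and no base change or modification of the special fibre can separate sections that coincide on the generic fibre. So DM stable reduction with disjoint sections is simply unavailable for such families. The standard repair is to test the valuative criterion only on traits whose generic point lands in the dense open substack of smooth curves with distinct points; but that reduction is legitimate only once one already knows the stack is of finite type and \emph{separated}, whereas your sketch derives separatedness simultaneously with properness from the same valuative uniqueness, so the logical order must be untangled (first prove separatedness, e.g.\ by the uniqueness-of-log-canonical-model argument applied to families with arbitrary generic fibre, or handle coincident generic sections directly by blowing up the total space and keeping track of the weights). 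As written, the argument is circular at exactly this point, and this---rather than the semiampleness or the $g=0$ bookkeeping you flag---is where the sketch needs genuine additional work.
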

A 
weight $1$ point gives a usual labeled point in the sense of Deligne-Mumford stability.   Points of weight 1 cannot collide with each other on nodal curves, since a set of weighted points $\{p_j\mid j\in J\subset\{1,...,n\}\}$ for $|J|>1$ can collide only when $\sum_{j\in J}a_j<1$, hence $\overline{\modm}_{g,1^n}$ is isomorphic to the Deligne-Mumford compactification.
\begin{definition}   \label{Dd}
For any $S\subset\{1,\dots,n\}$ and $\bold{a}\in\cd_{g,n}$, define  the locus of curves with a genus 0 reducible component containing precisely the points $\{p_j\mid j\in S\}$  by
\begin{equation}\label{d0S}
\delta_{0:S}\subset\Mbar{g}{\bold{a}},\qquad \text{if }\sum_{j\in S}a_j>1;\end{equation}
and the locus of curves where the points $\{p_j\mid j\in S\}$ collide by
\begin{equation}\label{D_S}
D_S\subset\Mbar{g}{\bold{a}},\qquad \text{if }\sum_{j\in S}a_j\leq1.\end{equation}

\end{definition}
Note that each $\delta_{0:S}$ defines a divisor in $\Mbar{g}{\bold{a}}$ whereas $\text{codim }D_S=|S|-1$ so only the $D_{i,j}$ define divisors.


\subsection{Chambers}
For any $J\subset\{1,...,n\}$ such that $|J|\geq 2$ define the wall
\[W_J=\{\mathbf{a}\in\cd_{g,n}\mid\sum_{j\in J}a_j=1\}.\]
The components of $\cd_{g,n}\setminus\cup_JW_J$ are {\em chambers} of $\cd_{g,n}$.  Given any $\mathbf{a}\in\cd_{g,n}\setminus\cup_JW_J$, 
for any $\mathbf{b}\in\cd_{g,n}$ that lies in the same chamber, a nodal curve $C$ is $\mathbf{a}$-stable if and only if it is $\mathbf{b}$-stable.  This is because otherwise along a path from $\mathbf{a}$ to $\mathbf{b}$, there is a point $\mathbf{a}'$ where the stability condition in Definition~\ref{astable} changes, hence 
$\chi_{g,\bold{a}'}(C')=0$ 
which implies that $\sum_Ja'_j=1$ for $J={\{j:p_j\in C'\}}$. 
Thus, if $\mathbf{a}$ and $\mathbf{b}$ lie in the same chamber of $\cd_{g,n}$ then
\[\overline{\modm}_{g,\mathbf{a}}\cong \overline{\modm}_{g,\mathbf{b}}
\]
and for any chamber $\cc\subset\cd_{g,n}$ we define 
\[ \overline{\modm}_{g,\cc}:=\overline{\modm}_{g,\mathbf{a}},\quad\text{for any }\mathbf{a}\in\cc.
\]
Note that Hassett defines a coarser chamber decomposition on which the isomorphism type of $\overline{\modm}_{g,\mathbf{a}}$ remains the same. 
 
A chamber $\cc\subset\cd_{g,n}$ is uniquely determined by an order-preserving function on the power set $\cp(\mathbf{n})$,
\[\cc:\cp(\mathbf{n})\to\{0,1\}\text{ defined on }J\subset\mathbf{n}\text{ by:}\quad \cc(J)=0\Leftrightarrow\forall\mathbf{a}\in\cc,\ \sum_{j\in J}a_j\leq1\]
(which is also denoted $\cc$ by a slight abuse of notation).  It is order-preserving with respect to the natural partial order on $\cp(\mathbf{n})$ and the total order $0<1$ on $\{0,1\}$.
The maximal or main chamber $\cc^\mathrm{M}\subset\cd_{g,n}$ is defined by the function $\cc^\mathrm{M}(J)=1$ for any $|J|>1$, which corresponds to $\sum_{j=1}^n\theta_j<2\pi$.  The minimal or light chamber $\cc^\mathrm{L}\subset\cd_{g,n}$ is defined by the function $\cc^\mathrm{L}(J)=0$ for all $J$.  When $g=0$, $\cc^\mathrm{L}=\emptyset$ so we instead define 
\begin{equation}  \label{ming=0}
\cc_j\subset\cd_{0,n},\qquad\cc _j(J)=1\quad\Leftrightarrow\quad\{j\}\subsetneq J\text{ or }\{j\}^c=J.
\end{equation} 

A chamber $\cc\subset\cd_{g,n}$ incident to and above a wall $W_S$, so in particular $\cc(S)=1$, determines a unique chamber $\cc'$ that satisfies $\cc'(S)=0$ and $\cc'(J)=\cc(J)$ for all $J\neq S$.  Then $\cc'$ is incident to and below $W_S$, and $\cc$ is related to $\cc'$ by a simple wall-crossing.

There are two important constructions concerning chambers that we will need in this work.

For any $S\subset \mathbf{n}$, there is a natural quotient construction that sends any chamber $\cc:P(\mathbf{n})\to\{0,1\}$ to a quotient chamber $\cc/S:P(\mathbf{n}/S)\to\{0,1\}$ defined by 
\begin{equation}   \label{quotient}
(\cc/S)(\bar{J})=\left\{\begin{array}{cl}0,&J\subset S\\
\cc(J),& J\cap S=\emptyset\\
1,&\text{otherwise}
\end{array}\right.
\end{equation}
where $\bar{J}$ is the image of $J$ under the natural map $\mathbf{n}\to\mathbf{n}/S$.  

For any $T\subset \mathbf{n}$, there is a natural restriction construction that sends any chamber $\cc:P(\mathbf{n})\to\{0,1\}$ to a chamber $\cc|_T:P(T)\to\{0,1\}$ in $\cD_{g,|T|}$ defined by \begin{equation}   \label{restriction}
\cc|_T(J)= \cc(J)
\end{equation}
and uniquely defined by the commuting diagram:
\begin{center}\begin{tikzcd}[row sep=3em, column sep=3em,
    text height=1.5ex, text depth=0.5ex]
\cP(\b{n}) \arrow[r,"\cc"]   & \{0,1\} 
\\
  \cP(T)  \arrow[ru,swap,"\cc|_T"]  \arrow[u,"\cup"] & 
\end{tikzcd}
\end{center}
In particular, for a chamber $\cc\subset\cD_{g,n+1}$, the unique chamber obtained by forgetting the $(n+1)$th point is $\cc|_{\b{n}}$. We will need this construction when dealing with forgetful morphisms.


\subsection{Reduction morphisms}\label{sec:HassRed}
For any two sets of weight data $\bold{a}$ and $\bold{b}$ such that $a_j\geq b_j$ for all $j$ we write $\bold{a}\geq\bold{b}$ and there exists a birational reduction morphism~\cite[\S 4]{HasMod}
$$\pi_{\bold{a},\bold{b}}:\Mbar{g}{\bold{a}}\longrightarrow \Mbar{g}{\bold{b}}$$
that contracts the rational tails divisors $\delta_{0:S}$, defined in \eqref{D_S}, for $|S|\geq 3$ where $\sum_{j\in S}b_j\leq 1$ but $\sum_{j\in S}a_j> 1$. We define the \emph{reduction sets} of such a reduction morphism as
$$ \mathcal{S}_{\bold{a},\bold{b}}:=  \{S\subset\{1,\dots,n\}| \sum_{j\in S}b_j\leq 1\text{ and  }\sum_{j\in S}a_j>1\}$$
and we refer to $\pi_{\bold{a},\bold{b}}$ as a \emph{simple reduction morphism} if $|\mathcal{S}|=1$.
Further, we have~\cite[Corollary 5.4]{AlexeevGuy} 
\begin{equation} \pi_{\bold{a},\bold{b}}^*(\psi_j)=\psi_j-\sum_{S\in N(j)} \delta_{0:S} \label{redmor} \end{equation}
where 
$$N(j):=\{S\in  \mathcal{S}_{\bold{a},\bold{b}} \mid j\in S\}.$$
For $\{i,j\}\in\mathcal{S}_{\bold{a},\bold{b}}$ we have ${\pi_{\bold{a},\bold{b}}}_*(\delta_{0:\{i,j\}})=D_{i,j}$, defined in \eqref{D_S}, and
$$\pi_{\bold{a},\bold{b}}^*D_{i,j}=\begin{cases}\delta_{0:\{i,j\}}+\sum_{S\in N(i,j)}\delta_{0:S} &\text{if $a_i+a_j>1$ }\\ 
D_{i,j}+\sum_{S\in N(i,j)}\delta_{0:S} &\text{if $a_i+a_j\leq1$ }\\
\end{cases}$$
where
$$N(i,j):=\{S\in  \mathcal{S}_{\bold{a},\bold{b}} \mid i,j\in S \}    $$
Clearly for $\bold{a}\geq\bold{b}\geq \bold{c}$ the relation 
$$\pi_{\bold{a},\bold{c}}=\pi_{\bold{b},\bold{c}}\circ \pi_{\bold{a},\bold{b}}$$
holds.

\subsection{Forgetful morphisms}\label{sec:HassForget}
For any weight vector $\bold{a}=(a_1,\dots,a_{n})$ such that the subvector $\bold{a}'=(a_{i_1},\dots,a_{i_r})$ obtained by forgetting entries is also a weight vector, there exists a forgetful morphism~\cite[\S 4]{HasMod}
$$\varphi_{\bold{a},\bold{a}'}:\Mbar{g}{\bold{a}}\longrightarrow \Mbar{g}{\bold{a}'}.$$
where the image of an $\bold{a}$-stable curve is obtained by stable reduction, that is, by successively contracting the rational components that do not remain $\bold{a}'$-stable after forgetting the specified marked points.

We specialise to the case that $r=n-1$, that is, only one point is forgotten. Without loss of generality let $\bold{a}=(a_1,\dots,a_{n})$ and let the last point be forgotten. We define the \emph{contraction set} as
$$\mathcal{T}_{\bold{a},\bold{a}'}:=\{S\subset \{1,\dots,n\}\mid n\in S\text{ and }1-a_n<\sum_{j\in S-\{n\}}a_j\leq1\}.$$
This set specifies exactly the rational tails that will become unstable after forgetting the $n$-th point. We have~\cite[Corollary 5.8]{AlexeevGuy} 
\begin{equation} 
\varphi_{\bold{a},\bold{a}'}^*(\psi_j)=\psi_j-\sum_{S\in M(j)} \delta_{0:S} \label{formor} \end{equation}
where 
$$M(j):=\{S\in  \mathcal{T}_{\bold{a},\bold{a}'} \mid j\in S\}.$$
Further, for $\{i,j\}\in \mathcal{T}_{\bold{a},\bold{a}'}$ we have
$$ \varphi_{\bold{a},\bold{a}'}^*D_{i,j}=D_{i,j}+\sum_{S\in M(i,j)}\delta_{0:S}   $$
for 
$$M(i,j):=\{S\in  \mathcal{T}_{\bold{a},\bold{a}'} \mid i,j\in S\}.$$ 
While for any $T\subset \{1,\dots,n-1\}$ such that $T\cup\{n\}\not\in \mathcal{T}_{\bold{a},\bold{a}'}$ we have
$$\varphi_{\bold{a},\bold{a}'}^*\delta_{0:T}=\delta_{0:T}+\delta_{0:T\cup\{n\}}.$$

\begin{lemma}\label{lem:forgetflat}
The forgetful morphism 
$$\varphi_{\bold{a},\bold{a}'}:\Mbar{g}{\bold{a}}\longrightarrow \Mbar{g}{\bold{a}'}$$
for $\bold{a}=(a_1,\dots,a_{n})$ and $\bold{a}'=(a_{1},\dots,a_{n-1})$ is flat if and only if the contraction set $\mathcal{T}_{\bold{a},\bold{a}'}$ contains only sets of order at most $2$.
\end{lemma}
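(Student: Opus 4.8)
The plan is to apply the fibre-dimension criterion for flatness (``miracle flatness''). Both $\Mbar{g}{\mathbf a}$ and $\Mbar{g}{\mathbf a'}$ are smooth Deligne--Mumford stacks, so the source is Cohen--Macaulay and the target is regular; hence $\varphi_{\mathbf a,\mathbf a'}$ is flat if and only if every fibre is equidimensional of the expected dimension $\dim\Mbar{g}{\mathbf a}-\dim\Mbar{g}{\mathbf a'}=1$. Since $\varphi_{\mathbf a,\mathbf a'}$ is surjective with irreducible source, every fibre already has dimension at least $1$, so the task reduces to showing that no fibre has dimension $\ge 2$. The contraction set $\mathcal T_{\mathbf a,\mathbf a'}$ is precisely the bookkeeping device that records where the fibres can jump.

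To compute the fibres I would factor the forgetful morphism through the universal curve. Fix a small $\epsilon>0$, set $\mathbf b=(a_1,\dots,a_{n-1},\epsilon)$, and let $u\colon\Mbar{g}{\mathbf b}\to\Mbar{g}{\mathbf a'}$ be the universal curve, whose fibre over $y$ is the curve $\mathcal C_y$. Then $\varphi_{\mathbf a,\mathbf a'}=u\circ\pi_{\mathbf a,\mathbf b}$ with $\pi_{\mathbf a,\mathbf b}$ the reduction morphism of \S\ref{sec:HassRed}, and for $\mathbf a$ generic in its chamber a direct comparison of definitions gives $\mathcal S_{\mathbf a,\mathbf b}=\mathcal T_{\mathbf a,\mathbf a'}$. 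Thus $\pi_{\mathbf a,\mathbf b}$ contracts exactly the tail divisors $\delta_{0:S}$ with $S\in\mathcal T_{\mathbf a,\mathbf a'}$ onto the loci $D_S$, the fibre over a point of $D_S$ being the moduli $\Mbar{0}{(a_j)_{j\in S},1}$ of the bubbled rational component (the node carrying weight $1$), of dimension $|S|-2$. Consequently $\varphi_{\mathbf a,\mathbf a'}^{-1}(y)=\pi_{\mathbf a,\mathbf b}^{-1}(\mathcal C_y)$, and the curve $\mathcal C_y$ meets $D_S$ precisely when the retained points $\{p_j:j\in S\setminus\{n\}\}$ are collided on $C_y$, that is, when $y\in D_{S\setminus\{n\}}$.

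The key observation for the reverse implication is that, as $p_n$ travels along $\mathcal C_y$, it lands on the collision locus of $S\setminus\{n\}$ only at isolated positions, so $\mathcal C_y$ meets each contracted $D_S$ in finitely many points. Hence $\pi_{\mathbf a,\mathbf b}^{-1}(\mathcal C_y)$ is the union of the strict transform of $\mathcal C_y$ with finitely many exceptional fibres, and $\dim\varphi_{\mathbf a,\mathbf a'}^{-1}(y)=\max\{1,\ \max_{S\in\mathcal T_{\mathbf a,\mathbf a'}}(|S|-2)\}$. Therefore $\varphi_{\mathbf a,\mathbf a'}$ is flat if and only if $|S|-2\le 1$ for every $S\in\mathcal T_{\mathbf a,\mathbf a'}$, i.e.\ every contracted tail carries at most two retained marked points, which is the order condition of the lemma; the forward direction is immediate by exhibiting, for any $S$ violating the bound, a single $y\in D_{S\setminus\{n\}}$ over which the bubble moduli $\Mbar{0}{(a_j)_{j\in S},1}$ already has dimension $\ge 2$. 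I expect the main obstacle to be the bookkeeping behind the identification $\mathcal S_{\mathbf a,\mathbf b}=\mathcal T_{\mathbf a,\mathbf a'}$ together with the verification that, over the deepest boundary strata where several retained collisions occur simultaneously, the extra point can still bubble off at only one node at a time so that no further dimension is gained; the factorisation through $\pi_{\mathbf a,\mathbf b}$ is designed to make exactly this transparent.
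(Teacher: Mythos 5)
Your strategy---miracle flatness together with the factorisation of $\varphi_{\mathbf{a},\mathbf{a}'}$ through the reduction to a small last weight---is the same fibre-dimension argument the paper runs, only executed in more detail, and precisely because you carry out the dimension count honestly, your conclusion does not match the lemma as stated. Your criterion is $|S|-2\le 1$ for every $S\in\mathcal{T}_{\mathbf{a},\mathbf{a}'}$, i.e.\ $|S|\le 3$. But every element of the contraction set contains $n$ by definition, so the lemma's condition ``sets of order at most $2$'' means $|S|\le 2$, i.e.\ at most \emph{one} retained point on each contracted tail, not two. Your closing sentence silently identifies ``$|S|-2\le1$'' with ``the order condition of the lemma'' via the gloss ``at most two retained marked points,'' and that identification is an off-by-one slip relative to the paper's conventions (compare Lemma~\ref{lem:pullbackgamma}, where sums run over $S\setminus\{n\}$ and $|S|$ counts $n$). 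You also need a brief remark that the zero-dimensional bubble loci for $|S|=2$ lie in the closure of the main component of the fibre, since miracle flatness requires the \emph{local} fibre dimension to equal $1$ at every point; and your identification $\mathcal{S}_{\mathbf{a},\mathbf{b}}=\mathcal{T}_{\mathbf{a},\mathbf{a}'}$ genuinely needs $\mathbf{a}$ off the walls, as you note.

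The interesting twist is that the mathematics is on your side: your count is correct, and it shows the borderline case $|S|=3$ is flat, so the lemma's ``only if'' direction fails under its literal reading. For $S=\{i,j,n\}$ with $a_i+a_j\le 1<a_i+a_j+a_n$, the fibre of $\varphi_{\mathbf{a},\mathbf{a}'}$ over a generic $y\in D_{i,j}$ is the union of the curve $C_y$ with the bubble moduli $\Mbar{0}{(a_i,a_j,a_n,1)}\cong\mathbb{P}^1$, glued at a point: reducible, but of pure dimension $1$ (the local model transverse to $D_{i,j}$ is $(w,t)\mapsto wt$, a flat semistable degeneration). A clean counterexample: $g=0$, $n=5$, $\mathbf{a}=(0.45,0.45,1,1,0.7)$, forgetting the fifth point, so that $\{1,2,5\}\in\mathcal{T}_{\mathbf{a},\mathbf{a}'}$; here $\Mbar{0}{\mathbf{a}}$ is a smooth irreducible projective surface dominating the smooth curve $\Mbar{0}{(0.45,0.45,1,1)}\cong\mathbb{P}^1$, and any such morphism is flat (torsion-free over a Dedekind base). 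Correspondingly, the paper's own one-line proof claims fibres ``of dimension at least $2$'' whenever $|S|\ge3$, whereas the fibre dimension over $\varphi(\delta_{0:S})$ is $|S|-2$, which is $\ge2$ only for $|S|\ge4$. The statement that \emph{is} equivalent to ``order at most $2$'' is the one Definition~\ref{defn:flat} actually uses downstream: $\varphi_{\mathbf{a},\mathbf{a}'}$ realises the universal curve over the target (all fibres isomorphic to the curves themselves). So you should either state plainly that your argument proves the corrected dichotomy---flat if and only if $|S|\le3$ for all $S\in\mathcal{T}_{\mathbf{a},\mathbf{a}'}$, universal curve if and only if $|S|\le2$---or, if ``order'' is to be read as the number of retained points $|S\setminus\{n\}|$, say so explicitly; as written, the final step of your proposal asserts an equivalence that is false under the paper's definitions.
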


\begin{proof}
In these cases $\Mbar{g}{\bold{a}}$ is isomorphic to the universal curve over $\Mbar{g}{\bold{a}'}$. In all other cases there exists some $S\in\mathcal{T}_{\bold{a},\bold{a}'}$ with $|S|\geq 3$ which gives $\varphi_{\bold{a},\bold{a}'}$ fibres of dimension at least $2$ over the image of $\delta_{0:S}$ providing a contradiction to flatness.
\end{proof}
This Lemma motivates the following two definitions.
\begin{definition}\label{defn:flat}
We refer to a signature $\bold{a}$ as \emph{flat in the $i$th coordinate} or that the $i$th marked point is a \emph{flat point} if the contraction set contains only sets of order at most $2$ for the morphism forgetting the $i$th point. Hence the forgetful morphism forgetting the $i$th point provides the universal curve over the target. We refer to a chamber $\cc\subset \cd_{g,n}$ as so if it contains a signature $\bold{a}\in\cc$ that is flat in the $i$th coordinate.
\end{definition}
 A stronger condition than the forgetful morphism being flat is when the contraction set is empty. 
\begin{definition}\label{defn:light}
We refer to a signature $\bold{a}$ as \emph{light in the $i$th coordinate} or that the $i$th marked point is a \emph{light point} if the contraction set is empty for the morphism forgetting the $i$th point. We refer to a chamber $\cc\subset \cd_{g,n}$ as so if it contains a signature $\bold{a}\in\cc$ that is light in the $i$th coordinate.
\end{definition}

\subsection{Cohomology classes}
For $\bold{a}=(a_1,\dots,a_n)$ with $0<a_i\leq1$ and $2g-2+\sum a_i>0$, the classes  $\psi_j\in H^2(\Mbar{g}{\bold{a}},\bq)$ are defined  by 
$$\psi_j:=c_1(s_j^*(\omega_\pi))$$ 
where $\omega_\pi$ is the relative dualising sheaf of the universal family
$$\pi:\mathcal{C}_{g,\bold{a}}\longrightarrow \Mbar{g}{\bold{a}}$$
and $s_j$ is the section corresponding to the $j$th point.

The following classes in $H^2(\Mbar{g}{\bold{a}},\bq)$ generate the rational Picard group: $\lambda$, the first Chern class of the Hodge bundle, $\psi_j$ for $j=1,\dots,n$, the classes $D_{i,j}$ for $a_i+a_j\leq1$ defined in \eqref{D_S}. 
For any $S\subset\{1,\dots,n\}$, the classes $\delta_{0:S}$ with $\sum_{j\in S}a_j> 1$ defined in \eqref{d0S}, and $\delta_{i:S}$ for $1\leq i\leq g/2$, given by the class of the locus of curves with a separating node that separates the curve such that one component has genus $i$ and contains precisely the markings of $S$.  
Also, $\delta_{\rm{irr}}$ the class of the locus of curves with a non-separating node. 
For $g\geq 3$ these divisors freely generate the rational Picard group.

\subsubsection{The kappa class}\label{sec:kappa}
We define $\kappa_1:=\pi_*(c_1(\omega_\pi(\sum D_i))^2)$.
Note that this definition follows that of~\cite{ACoCom} and differs from the Miller-Morita-Mumford class $\pi_*(c_1(\omega_\pi)^2)$ though both classes appear in the literature with the same notation.

\begin{proposition}\label{prop:kappaclass}
The kappa class of $\Mbar{g}{\bold{a}}$ is given in terms of standard generators as
$$\kappa_1=12\lambda+\sum_{i=1}^n\psi_i-\delta+2\sum_{a_i+a_j\leq1}D_{i,j}.     $$
where $\delta$ refers to the generically nodal boundary classes, that is, it does not include the classes $D_{i,j}$.
\end{proposition}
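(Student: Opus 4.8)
The plan is to expand the definition $\kappa_1=\pi_*\big(c_1(\omega_\pi(\textstyle\sum_i D_i))^2\big)$ directly. Write $K:=c_1(\omega_\pi)$ and $\sigma:=\sum_{i=1}^n[D_i]$, where $D_i=\mathrm{im}(s_i)\subset\mathcal{C}_{g,\mathbf{a}}$ is the image of the $i$th section. Then
\[
\kappa_1=\pi_*(K^2)+2\,\pi_*(K\cdot\sigma)+\pi_*(\sigma^2),
\]
and I would evaluate the three push-forwards separately: the first via the Grothendieck--Riemann--Roch computation of Mumford, and the remaining two via the self-intersection formula for sections, with the collision loci $D_{i,j}$ entering only through the cross terms.

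For the first term, $\pi_*(K^2)$ is the Miller--Morita--Mumford class of the family $\pi\colon\mathcal{C}_{g,\mathbf{a}}\to\Mbar{g}{\mathbf{a}}$. Since $\pi$ is a flat family of nodal curves and $\lambda=c_1(\pi_*\omega_\pi)$, I claim Mumford's relation $\pi_*(K^2)=12\lambda-\delta$ holds verbatim (cf.\ \cite{ACoCom}), the point being that the \textsc{grr} argument depends only on the nodal geometry of the fibres and is insensitive to the marked sections. Crucially, here $\delta$ is exactly the generically nodal boundary, which on $\Mbar{g}{\mathbf{a}}$ excludes the collision divisors $D_{i,j}$, since those lie over smooth fibres.

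For the second term I would use $\pi_*(K\cdot[D_i])=s_i^*K=\psi_i$ by the very definition of $\psi_i$, giving $2\,\pi_*(K\cdot\sigma)=2\sum_i\psi_i$. For the third term, the self-intersections contribute $\pi_*([D_i]^2)=-\psi_i$: the marked points are smooth points of the fibres, so the normal bundle of the section is $N_{D_i}=s_i^*\omega_\pi^{-1}$, whence $[D_i]^2=s_{i*}(-\psi_i)$. These account for $-\sum_i\psi_i$. The cross terms $\pi_*([D_i]\cdot[D_j])$ with $i\neq j$ are where the Hassett structure enters: the sections $s_i$ and $s_j$ meet precisely over the locus where the two points collide, namely along $D_{i,j}$, and this occurs exactly when $a_i+a_j\leq 1$. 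Combining, $\pi_*(\sigma^2)=-\sum_i\psi_i+2\sum_{a_i+a_j\leq1}D_{i,j}$, and summing the three contributions gives $\kappa_1=12\lambda-\delta+\sum_i\psi_i+2\sum_{a_i+a_j\leq1}D_{i,j}$, as claimed.

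The main obstacle I anticipate is the careful justification of the cross-term identity $\pi_*([D_i]\cdot[D_j])=D_{i,j}$ for $a_i+a_j\leq1$: one must check that over $D_{i,j}$ the two sections coincide transversally, so that the intersection carries multiplicity one, and that $D_i\cap D_j$ maps isomorphically onto the codimension-one collision locus $D_{i,j}\subset\Mbar{g}{\mathbf{a}}$ (a dimension count confirms this map is generically finite of degree one). A secondary point requiring care is confirming that the boundary term $\delta$ in Mumford's relation on $\Mbar{g}{\mathbf{a}}$ is genuinely only the generically nodal boundary and receives no contribution from $D_{i,j}$, which holds because the fibres of $\pi$ over $D_{i,j}$ are smooth.
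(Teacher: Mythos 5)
Your proof is correct, and it takes a genuinely different route from the paper's. You expand $\kappa_1=\pi_*\big(c_1(\omega_\pi(\sum_i D_i))^2\big)$ directly on the Hassett universal curve, combining Mumford's GRR relation $\pi_*(c_1(\omega_\pi)^2)=12\lambda-\delta$ (which, as you correctly observe, holds for any flat nodal family and picks up only the generically nodal boundary, since the fibres over $D_{i,j}$ are smooth) with the section identities $\pi_*(\omega_\pi\cdot D_i)=\psi_i$, $\pi_*(D_i^2)=-\psi_i$ and $\pi_*(D_i\cdot D_j)=D_{i,j}$ for $a_i+a_j\leq1$. The paper never invokes GRR on the Hassett space: it starts from the known formula $\kappa_1=12\lambda+\sum_j\psi_j-\delta$ on the Deligne--Mumford compactification and transports it along simple reduction morphisms $\alpha$, computing on the pair of universal families, via $p^*\omega_\zeta=\omega_\pi-E_S$ and the exceptional divisor $E_S$, the pushforwards $\alpha_*(\kappa_1^{\mathbf{a}})=\kappa_1^{\mathbf{b}}-D_{i,j}$ and $\alpha_*\psi_j=\psi_j+D_{i,j}$ for $S=\{i,j\}$; the term $2\sum_{a_i+a_j\leq1}D_{i,j}$ then accumulates from these corrections over the $|S|=2$ walls. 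Your route is shorter and more self-contained (indeed the three section identities you use are recorded, without proof, in the paper's later subsection on the Weil--Petersson class, so the two arguments are mutually consistent), and the only points genuinely needing care are the two you flag: versality of the family at the nodes, so the singular locus pushes forward to $\delta$ with multiplicity one, and the multiplicity-one statement $s_j^*[D_i]=D_{i,j}$, which holds because the difference of the two sections is a local defining equation of $D_{i,j}$ (alternatively visible by pulling back along the reduction morphism from $\Mbar{g}{n}$, where $D_i\cap D_j=\emptyset$ and $\pi^*D_{i,j}$ contains $\delta_{0:\{i,j\}}$ with coefficient one). What the paper's longer approach buys is precisely the pushforward and pullback rules for $\kappa_1$ under reduction morphisms, which it reuses immediately afterwards (Lemma~\ref{lem:kappapullback1}); your direct computation yields the class itself, but those transformation formulas would still require a separate argument.
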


\begin{proof}
If the signatures $\bold{a}$ and $\bold{b}$ give a simple reduction morphism $\alpha$ for the set $S\subset \{1,\dots,n\}$. We have the following diagram of the universal families 
 \begin{center}
\begin{tikzpicture}[description/.style={fill=white, inner sep=2pt}]
    \matrix (m) [matrix of math nodes, row sep=3em, column sep=3em,
    text height=1.5ex, text depth=0.5ex]
    { \mathcal{C}_{g,\bold{a}}& \mathcal{C}_{g,\bold{b}}\\
    \Mbar{g}{\bold{a}}& \Mbar{g}{\bold{b}}\\};
    \path[->, font=\scriptsize]
    (m-1-1) edge node[auto] {$p$} (m-1-2);
    \path[->, font=\scriptsize]
    (m-2-1) edge node[auto] {$\alpha$} (m-2-2);
    \path[->, font=\scriptsize]
    (m-1-1) edge node[auto] {$\pi$} (m-2-1);
      \path[->, font=\scriptsize]
    (m-1-2) edge node[auto] {$\zeta$} (m-2-2);
  \end{tikzpicture}
  \end{center}
with
\begin{eqnarray*}
p^*\omega_{\zeta}&=&\omega_\pi-E_S\\
p^*D_j&=&\begin{cases}D_j&\text{ for $j\not\in S$}\\ D_j+E_S&\text{ for $j\in S$}  \end{cases}
\end{eqnarray*}
where $E_S$ is the irreducible component of $\pi^*\delta_{0:S}$ that is contracted under $p$. Then using the notation $\kappa_1^{\bold{a}}$ and  $\kappa_1^{\bold{b}}$ for the kappa classes for $\Mbar{g}{\bold{a}}$ and $\Mbar{g}{\bold{b}}$ respectively, we obtain
\begin{eqnarray*}
\kappa_1^{\bold{b}}&=&\zeta_*([\omega_\zeta(\sum D_j)]^2)\\
&=& \alpha_*\pi_*((\omega_\pi+\sum D_j).p^*(\omega_\zeta+\sum D_j))\\
&=& \alpha_*\pi_*((\omega_\pi+\sum D_j).(\omega_\pi+\sum D_j-E_S+|S|E_S))\\
&=&\alpha_*\pi_*((\omega_\pi+\sum D_j)^2+(|S|-1)E_S.(\omega_\pi+\sum D_j))\\
&=&\alpha_*\pi_*((\omega_\pi+\sum D_j)^2+(|S|-1)E_S.\omega_\pi+(|S|-1)E_S.(\sum D_j))\\
&=&\alpha_*(\kappa_1^{\bold{a}}+(|S|-1)^2\delta_{0:S})\\
&=&\begin{cases}\alpha_*(\kappa_1^{\bold{a}})+D_{i,j}&\text{ for $S=\{i,j\}$}\\
\alpha_*(\kappa_1^{\bold{a}})&\text{for $|S|\geq 3$,}
\end{cases}
\end{eqnarray*}
which gives the pushforward formula
$$ \alpha_*(\kappa_1^{\bold{a}}) =\begin{cases}  \kappa_1^{\bold{b}}-D_{i,j}&\text{ for $S=\{i,j\}$}\\
\kappa_1^{\bold{b}}&\text{for $|S|\geq 3$.}\end{cases}
$$
The final observation is that for $j\in S$ we obtain
$$\alpha_*(\psi_j)=\alpha_*((\alpha^*\psi_j)+\delta_{0:S})=\begin{cases} \psi_j+D_{i,j} &\text{ for $S=\{i,j\}$}\\ 
\psi_j&\text{for $|S|\geq 3$,}
\end{cases}  $$
while $\alpha_*\psi_j=\psi_j$ for $j\not\in S$. The general formula for the class of $\kappa_1$ follows from these pushforward formulas and the known formula 
$$\kappa_1=12\lambda -\delta+\sum_{j=1}^n\psi_j$$ 
on the Deligne-Mumford compactification $\Mbar{g}{n}$.
\end{proof}

\begin{lemma}\label{lem:kappapullback1}
If the signatures $\bold{a}$ and $\bold{b}$ give a reduction morphism. Then $\pi_{\bold{a},\bold{b}}$ transforms the kappa class as follows:
$$(\pi_{\bold{a},\bold{b}})_*(\kappa_1^{\bold{a}})=\kappa_1^{\bold{b}}-\sum_{\{i,j\}\in\mathcal{S}_{\bold{a},\bold{b}}}D_{i,j}$$
and
$$\pi_{\bold{a},\bold{b}}^*(\kappa_1^{\bold{b}})=\kappa_1^{\bold{a}}+\sum_{S\in\mathcal{S}_{\bold{a},\bold{b}}}(|S|-1)^2\delta_{0:S}.$$
\end{lemma}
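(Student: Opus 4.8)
The plan is to upgrade the single simple-reduction computation carried out inside the proof of Proposition~\ref{prop:kappaclass} to an arbitrary reduction morphism, by factoring $\pi_{\bold{a},\bold{b}}$ into simple reductions. First I would choose a generic path from $\bold{a}$ to $\bold{b}$ in $\cd_{g,n}$; since $\mathcal{S}_{\bold{a},\bold{b}}$ is finite, such a path crosses each wall $W_S$ with $S\in\mathcal{S}_{\bold{a},\bold{b}}$ exactly once and no two walls at once. This yields a chain $\bold{a}=\bold{a}_0\geq\bold{a}_1\geq\cdots\geq\bold{a}_m=\bold{b}$ in which each $\alpha_k:=\pi_{\bold{a}_{k-1},\bold{a}_k}$ is a simple reduction contracting a single $\delta_{0:S_k}$, with $\{S_1,\dots,S_m\}=\mathcal{S}_{\bold{a},\bold{b}}$, and $\pi_{\bold{a},\bold{b}}=\alpha_m\circ\cdots\circ\alpha_1$ by the composition relation of Section~\ref{sec:HassRed}. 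From Proposition~\ref{prop:kappaclass} I may take as known, for each simple $\alpha_k$, that $(\alpha_k)_*\kappa_1^{\bold{a}_{k-1}}=\kappa_1^{\bold{a}_k}-D_{i,j}$ when $S_k=\{i,j\}$ and $(\alpha_k)_*\kappa_1^{\bold{a}_{k-1}}=\kappa_1^{\bold{a}_k}$ when $|S_k|\geq 3$; note that when $|S_k|=2$ the morphism $\alpha_k$ is actually an isomorphism (a rational bridge carrying two marks and a node is rigid), the $D_{i,j}$ arising solely from the differing $(\sum_jD_j)$-twist in the two copies of $\kappa_1$.

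For the pushforward formula I would argue by composition, using functoriality $(\pi_{\bold{a},\bold{b}})_*=(\alpha_m)_*\circ\cdots\circ(\alpha_1)_*$. Each $|S_k|=2$ step subtracts the corresponding $D_{i,j}$, each $|S_k|\geq 3$ step contributes nothing. The only point to verify is that once a class $D_{i,j}$ has been produced it survives the remaining reductions, i.e. $(\alpha_\ell)_*D_{i,j}=D_{i,j}$ for $\ell>k$; this holds because the collision divisor $D_{i,j}$ is neither equal to nor contained in the contracted divisor $\delta_{0:S_\ell}$, so $\alpha_\ell$ is an isomorphism near its generic point. Since distinct pairs give distinct divisors, summing the contributions gives $(\pi_{\bold{a},\bold{b}})_*\kappa_1^{\bold{a}}=\kappa_1^{\bold{b}}-\sum_{\{i,j\}\in\mathcal{S}_{\bold{a},\bold{b}}}D_{i,j}$.

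For the pullback formula I would first establish the simple case $\alpha^*\kappa_1^{\bold{b}}=\kappa_1^{\bold{a}}+(|S|-1)^2\delta_{0:S}$ for a simple reduction $\alpha\colon\Mbar{g}{\bold{a}}\to\Mbar{g}{\bold{b}}$ contracting $\delta_{0:S}$. Writing $A:=\omega_\pi+\sum_jD_j$ (so $\kappa_1^{\bold{a}}=\pi_*(A^2)$) and using the relations $p^*\omega_\zeta=\omega_\pi-E_S$ and $p^*D_j=D_j+E_S$ for $j\in S$ from the diagram in the proof of Proposition~\ref{prop:kappaclass}, one has $p^*(\omega_\zeta+\sum_jD_j)=A+(|S|-1)E_S$, and the same diagram gives
\begin{equation*}
\alpha^*\kappa_1^{\bold{b}}=\pi_*\big((A+(|S|-1)E_S)^2\big)=\kappa_1^{\bold{a}}+2(|S|-1)\pi_*(E_S\cdot A)+(|S|-1)^2\pi_*(E_S^2).
\end{equation*}
The middle term is $2(|S|-1)^2\delta_{0:S}$ by the identity $\pi_*(E_S\cdot A)=(|S|-1)\delta_{0:S}$ already recorded in Proposition~\ref{prop:kappaclass}, so it remains to compute the self-intersection $\pi_*(E_S^2)=-\delta_{0:S}$; since $E_S\to\delta_{0:S}$ is the family of contracted rational tails, a $\bbP^1$-fibration, this reduces to $\pi_*(E_S^2)=(\pi|_{E_S})_*c_1(N_{E_S})$ together with the fact that the normal bundle has degree $-1$ on each tail. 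Assembling gives $\alpha^*\kappa_1^{\bold{b}}=\kappa_1^{\bold{a}}+(|S|-1)^2\delta_{0:S}$. The general formula then follows by composing $\pi_{\bold{a},\bold{b}}^*=\alpha_1^*\circ\cdots\circ\alpha_m^*$ and checking $\alpha_\ell^*\delta_{0:S_k}=\delta_{0:S_k}$ for $k>\ell$: the inequality on weights rules out $S_k\subsetneq S_\ell$, so the only possible nesting is $S_\ell\subsetneq S_k$, and as the contraction centre of $\alpha_\ell$ is not contained in $\delta_{0:S_k}$ there is no exceptional correction.

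The hard part will be the self-intersection input $\pi_*(E_S^2)=-\delta_{0:S}$ for $|S|\geq 3$ (the one genuinely new geometric computation, the pushforward simple case being available from Proposition~\ref{prop:kappaclass}) together with the accompanying bookkeeping of nested reduction sets. Concretely, I expect the delicate points to be justifying that $\alpha^*$ may be computed through the (non-Cartesian) universal-curve diagram for these classes, or else determining the coefficient $(|S|-1)^2$ directly by restricting $\alpha^*\kappa_1^{\bold{b}}-\kappa_1^{\bold{a}}=c_S\,\delta_{0:S}$ to a test curve contracted by $\alpha$, and confirming that the exceptional divisors $\delta_{0:S}$ transform to themselves under the remaining simple reductions so that the coefficients attached to distinct sets of $\mathcal{S}_{\bold{a},\bold{b}}$ do not mix.
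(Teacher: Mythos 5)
Your argument is correct, and for the pullback formula it takes a genuinely different route from the paper. The pushforward half essentially coincides with the paper's, which simply cites the simple-reduction computation inside the proof of Proposition~\ref{prop:kappaclass}; the factorisation of $\pi_{\bold{a},\bold{b}}$ into simple reductions along a generic monotone path, and the check that the classes $D_{i,j}$ survive the later pushforwards, are the routine details the paper leaves implicit and you make explicit. For the pullback, the paper never returns to the universal curve: it expands $\kappa_1^{\bold{b}}=12\lambda+\sum_i\psi_i-\delta+2\sum_{b_i+b_j\leq1}D_{i,j}$ via Proposition~\ref{prop:kappaclass} and pulls back term by term using the formulas of \S\ref{sec:HassRed} --- \eqref{redmor} for the $\psi_j$, the stated pullbacks of the $D_{i,j}$, and $\pi^*\delta=\delta-\sum_{S\in\mathcal{S}_{\bold{a},\bold{b}}}\delta_{0:S}$ --- so the coefficient $(|S|-1)^2$ falls out of the count $-|S|+1+2\binom{|S|}{2}$, and since those formulas hold for arbitrary reduction morphisms the general case is done in one stroke, with no nesting analysis needed. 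Your nesting bookkeeping is nonetheless correct (monotonicity of the path forces $S_k\subsetneq S_\ell\Rightarrow k<\ell$, and in the remaining cases a generic curve contracted by $\alpha_\ell$ avoids $\delta_{0:S_k}$, so there is no exceptional correction), as is your geometric computation of the simple case: the base-change identity $\alpha^*\zeta_*=\pi_*p^*$ that you flag holds because $\zeta$ is flat and $\mathcal{C}_{g,\bold{a}}\to\Mbar{g}{\bold{a}}\times_{\Mbar{g}{\bold{b}}}\mathcal{C}_{g,\bold{b}}$ is proper and birational, while the input $\pi_*(E_S^2)=-\delta_{0:S}$ follows at once from $\pi^*\delta_{0:S}=E_S+E_S^c$ (with $E_S^c$ the complementary components over $\delta_{0:S}$), $\pi_*E_S=0$, and the fact that $E_S\cdot E_S^c$ is the family of nodes, a section of $E_S\to\delta_{0:S}$ --- equivalently $\deg\mathcal{O}(E_S)|_R=-1$ on each contracted tail $R$, as you say. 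The trade-off: the paper's derivation is shorter, outsourcing everything to Proposition~\ref{prop:kappaclass} and the formulas quoted from \cite{AlexeevGuy}, whereas yours stays inside the single universal-curve diagram already used there and makes the origin of $(|S|-1)^2$ visible as $2(|S|-1)^2-(|S|-1)^2$, the cross term against the self-intersection of $E_S$.
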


\begin{proof}
The first relation is proven in the proof of Proposition~\ref{prop:kappaclass}. The second relation follows from relating the class of $\kappa_1$ given in Proposition~\ref{prop:kappaclass} via the pullback formula for reduction morphisms given in \S\ref{sec:HassRed}.
\end{proof}

\begin{lemma}\label{lem:kappapullback2}
Consider the forgetful morphism $\varphi=\varphi_{\bold{a},\bold{a}'}$ for $\bold{a}=(a_1,\dots,a_n)$ and $\bold{a}'=(a_1,\dots,a_{n-1})$ with contraction set $\mathcal{T}= \mathcal{T}_{\bold{a},\bold{a}'} $ then 
$$\varphi^*\kappa_1^{\bold{a}'}= \kappa_1^{\bold{a}} -\psi_n-2\sum_{a_i+a_n\leq1}D_{i,n}+\sum_{S\in \mathcal{T}}(|S|-2)^2\delta_{0:S}.  $$
\end{lemma}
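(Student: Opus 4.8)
The plan is to pull back the explicit expression for $\kappa_1^{\mathbf{a}'}$ supplied by Proposition~\ref{prop:kappaclass} term by term, using the forgetful-morphism pullback formulas recorded in Section~\ref{sec:HassForget}, and then to reorganise the result into the expression for $\kappa_1^{\mathbf{a}}$ given by the same proposition. Writing $\kappa_1^{\mathbf{a}'}=12\lambda+\sum_{j=1}^{n-1}\psi_j-\delta'+2\sum_{i,j\le n-1,\,a_i+a_j\le1}D_{i,j}$, I would compute $\varphi^*$ of each summand separately and collect the resulting corrections, all of which are supported on the rational tails $\delta_{0:S}$ with $S\in\mathcal{T}$.

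First I would record the straightforward terms. One has $\varphi^*\lambda=\lambda$, since the Hodge bundle is pulled back under forgetful morphisms. By \eqref{formor} the psi-classes give $\varphi^*\!\sum_{j=1}^{n-1}\psi_j=\sum_{j=1}^{n-1}\psi_j-\sum_{S\in\mathcal{T}}(|S|-1)\delta_{0:S}$, because each $S\in\mathcal{T}$ contains $n$ together with exactly $|S|-1$ indices $j\le n-1$. For the divisors $D_{i,j}$ with $i,j\le n-1$ I would use $\varphi^*D_{i,j}=D_{i,j}+\sum_{S\in M(i,j)}\delta_{0:S}$ with $M(i,j)=\{S\in\mathcal{T}\mid i,j\in S\}$. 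The boundary term requires $\varphi^*\delta'$, treated separately below.

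Two observations make the bookkeeping collapse to something purely combinatorial. Since every $S\in\mathcal{T}$ satisfies $\sum_{j\in S\setminus\{n\}}a_j\le1$ by definition of the contraction set, any pair $\{i,j\}\subset S\setminus\{n\}$ automatically has $a_i+a_j\le1$; hence the number of divisors $D_{i,j}$ (with $i,j\le n-1$, $a_i+a_j\le1$) whose pullback contributes $\delta_{0:S}$ is exactly $\binom{|S|-1}{2}$, with no dependence on the weights. Secondly, I claim $\varphi^*\delta'=\delta-\sum_{S\in\mathcal{T}}\delta_{0:S}$: the source boundary divisors contracted by $\varphi$ are precisely the rational tails $\delta_{0:S}$ with $S\in\mathcal{T}$ (for such $S$ one has $\sum_{j\in S}a_j>1$, so $\delta_{0:S}$ is a genuine boundary divisor of $\Mbar{g}{\mathbf{a}}$, which becomes unstable once $n$ is forgotten), while every other boundary divisor of $\Mbar{g}{\mathbf{a}}$—namely $\delta_{\mathrm{irr}}$, the separating classes $\delta_{i:T}$, and the remaining rational tails—dominates a unique boundary divisor of $\Mbar{g}{\mathbf{a}'}$ with multiplicity one. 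In particular no $\delta_{0:S}$ with $S\in\mathcal{T}$ appears in $\varphi^*\delta'$. Verifying this last statement, that the contracted tails drop out of $\varphi^*\delta'$ and that the surviving divisors occur with multiplicity one, is the main point of the argument; it follows from the local description of the forgetful morphism underlying the formulas of Section~\ref{sec:HassForget}.

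Assembling the four contributions, the classes $12\lambda$, $\sum_{j\le n-1}\psi_j$ and $2\sum_{i,j\le n-1}D_{i,j}$ reproduce the corresponding terms of $\kappa_1^{\mathbf{a}}$ once one accounts for the extra summands $\psi_n$ and $2\sum_{a_i+a_n\le1}D_{i,n}$ present in $\kappa_1^{\mathbf{a}}$ but absent from $\kappa_1^{\mathbf{a}'}$; this is exactly the origin of the terms $-\psi_n-2\sum_{a_i+a_n\le1}D_{i,n}$ in the statement. It then remains to match the coefficient of each $\delta_{0:S}$ with $S\in\mathcal{T}$. Collecting the contributions above, this coefficient equals $-(|S|-1)+2\binom{|S|-1}{2}=(|S|-1)(|S|-3)$, whereas $\kappa_1^{\mathbf{a}}$ already contributes $-1$ to $\delta_{0:S}$ through its term $-\delta$. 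The required correction is therefore $(|S|-1)(|S|-3)+1=(|S|-2)^2$, which is precisely the coefficient in $\sum_{S\in\mathcal{T}}(|S|-2)^2\delta_{0:S}$, completing the identification.
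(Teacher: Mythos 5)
Your proposal is correct and follows essentially the same route as the paper: the paper's proof likewise pulls back the expression of Proposition~\ref{prop:kappaclass} term by term through the formulas of Section~\ref{sec:HassForget}, resting on precisely your two key observations, namely $\varphi^*\delta'=\delta-\sum_{S\in\mathcal{T}}\delta_{0:S}$ and that the pullbacks of the $D_{i,j}$ contribute $\binom{|S|-1}{2}\delta_{0:S}$ for each $S\in\mathcal{T}$. Your count $-(|S|-1)+1+2\binom{|S|-1}{2}=(|S|-2)^2$ matches, and your explicit justification that every pair in $S\setminus\{n\}$ automatically satisfies $a_i+a_j\leq1$ fills in a detail the paper leaves implicit.
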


\begin{proof} The result follows from the observations that 
$$\varphi^*\delta=\delta-\sum_{S\in\mathcal{T}}\delta_{0:S}$$
and 
$$\sum_{\tiny{\begin{array}{ccc}&a_i+a_j\leq1,&\\&1\leq i,j\leq n-1&\end{array}}}\hspace{-10mm}\varphi^*D_{i,j}=\hspace{-5mm}\sum_{\tiny{\begin{array}{ccc}&a_i+a_j\leq1,&\\&1\leq i,j\leq n-1&\end{array}}}\hspace{-10mm}D_{i,j}+\sum_{S\in\mathcal{T}}{\tiny \frac{(|S|-1)(|S|-2)}{2}}\delta_{0:S}.$$
\end{proof}

In the rest of this paper, in an effort to manage notation, we will omit the indices on $\kappa_1$ as it is clear from context to which class we are referring.

\subsection{Limits of conical hyperbolic surfaces and compactifications}
There are natural compactifications $\overline{\modm}_{g,n}^{\text{hyp}}(\mathbf{L})$ formed via limits of families of conical hyperbolic surfaces.  They are isomorphic to Hassett's compactifications for corresponding stability conditions.  Such limits naturally produce nodal hyperbolic surfaces and also allow coalescing cone angles for large enough cone angles.  A brief description of the proof \cite{ANoVol} that this produces spaces isomorphic to Hassett's compactifications is as follows.  If $J\subset\{1,...,n\}$ defines a collection of cone points with angles satisfying $\sum_{j\in J}\theta_j<2\pi(|J|-1)$, the points repel, in the sense that as such a collection of points coalesce, a geodesic surrounding these points becomes shorter and produces a node before the cone points can meet.
The agreement with Hassett's local condition for the same behaviour is shown by:
\[\sum_{j\in J}\theta_j<2\pi(|J|-1)\ \Leftrightarrow\ \sum_{j\in J}(1-a_j)<|J|-1\ \Leftrightarrow\ \sum_{j\in J}a_j>1.\]
The limiting nodal curve has hyperbolic components, and each irreducible component minus its nodal points satisfies a negative weighted Euler characteristic condition. Hassett's stability condition on nodal curves given in Definition~\ref{astable}, produces the same negative weighted Euler characteristic condition on irreducible components, which guarantees existence of a conical hyperbolic metric in its conformal class, proven by McOwen in \cite{McOPoi}.  Hence the hyperbolic nodal curves and $\b{a}$-stable curves correspond.  On the other hand, if $\sum_{j\in J}\theta_j>2\pi(|J|-1)$, corresponding to $\sum_{j\in J}a_j<1$, in \cite{ANoVol} it is proven that cone angles can coalesce to produce a new angle $\theta=\sum_{j\in J}\theta_j+2\pi(1-|J|)$ corresponding to a new weight $a=\sum_{j\in J}a_j\leq 1$, again mirroring Hassett's stability condition. 

Schumacher and Trapani \cite{STrWei} generalised a construction by Wolpert \cite{WolChe},  of the Weil-Petersson form via the push forward of the square of curvature of a natural Hermitian line bundle.  This led them to define a symplectic form $\omega^{\text WP}_{\mathbf{L}}$ on $\modm_{g,n}\cong\cM^{\text{hyp}}_{g,n}(\boldsymbol{L})$ 
for a fixed choice of $\boldsymbol{L}=i\boldsymbol{\theta}\in i(0,2\pi]^n$ via a naturally defined Hermitian metric on the relative log-canonical line bundle over the universal curve.  Their construction uses the existence of an incomplete hyperbolic metric with cone angles $\theta_j$ on any fibre which varies smoothly in the family $S$ proven by McOwen \cite{McOPoi}.  The hyperbolic metric is used to define a K\"ahler metric on $\modm_{g,n}$ with K\"ahler form $\omega^{\text WP}_{\mathbf{L}}$.  The relationship of $\omega^{\text WP}_{\mathbf{L}}$ with the curvature of a Hermitian metric on the relative log-canonical line bundle over the universal curve generalises Wolpert's proof as follows.  Given any family of curves $X\to S$ with $n$ sections having image divisor $D=\displaystyle\sqcup_{j=1}^n D_j\subset X$, fix a choice of weights $\mathbf{a}=(a_1,...,a_n)\in(0,1]^n$ and define $\mathbf{a}\cdot D=\sum a_jD_j$.  The relative log-canonical line bundle $K_{X/S}(D)$ is equipped with a Hermitian metric, using the conical hyperbolic metric on fibres determined by $\mathbf{a}\cdot D$ for $\mathbf{a}=\mathbf{a}(\boldsymbol{\theta})$.  It has curvature, which defines it first Chern form adjusted by the weights $\mathbf{a}$, given by the real $(1,1)$ form defined over $X\setminus D$ by
\[\Omega_{X/S}(\mathbf{a}):=\tfrac{i}{2}\partial\bar{\partial}\log(g_{\mathbf{a}})\]
where $g_{\mathbf{a}}$ defines the conical hyperbolic metric $g_{\mathbf{a}}|dz|^2$ on the fibre, with cone angles $\boldsymbol{\theta}$, and the operators $\partial=\frac{\partial}{\partial z}$ and $\bar{\partial}=\frac{\partial}{\partial\bar{z}}$ use the same local coordinate $z$ on the fibre.  
\begin{thrm}[\cite{STrWei}]  \label{STrThrm} 
\begin{equation}  \label{intfibre}
\frac12\int_{X/S}\Omega_{X/S}(\mathbf{a})^2=\omega^{\text WP}_{\mathbf{L}},\quad \mathbf{a}=\mathbf{a}(-i\boldsymbol{L}).
\end{equation}
\end{thrm}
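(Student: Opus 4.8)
The plan is to follow the fibre-integration strategy of Wolpert \cite{WolChe}, generalised to the conical setting as in \cite{STrWei}, reducing the identity \eqref{intfibre} to the standard description of $\omega^{WP}_{\mathbf{L}}$ as the $L^2$-pairing of harmonic Beltrami differentials. Since both sides of \eqref{intfibre} are $(1,1)$-forms on $S$ and the statement is local on the base, I would fix a point of $S$, choose local holomorphic coordinates $(z,s_1,\dots,s_m)$ on $X$ with $z$ a fibre coordinate and $s=(s_\alpha)$ base coordinates, and work with the potential $\phi:=\log g_{\mathbf{a}}$, so that $\Omega_{X/S}(\mathbf{a})=\tfrac{i}{2}\partial\bar\partial\phi$ with $\partial,\bar\partial$ the full Dolbeault operators on $X$. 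The first input is the defining equation of the fibrewise metric: on each fibre $e^{\phi}|dz|^2$ has constant curvature $-1$ away from the cone points, which is the Liouville equation $\phi_{z\bar z}=\tfrac12 e^{\phi}$. Hence the fibre restriction of $\Omega_{X/S}(\mathbf{a})$ is exactly the hyperbolic area form, $\Omega|_{X_s}=\tfrac{i}{4}e^{\phi}\,dz\wedge d\bar z$.

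Next I would decompose $\partial\bar\partial\phi$ by type relative to the fibration, writing its components as $\phi_{z\bar z}$, the mixed terms $\phi_{z\bar s_\alpha},\phi_{s_\alpha\bar z}$, and the base terms $\phi_{s_\alpha\bar s_\beta}$. The key geometric point, going back to Schumacher, is that the $\Omega$-horizontal lift of a base vector $\partial_{s_\alpha}$ carries a vertical correction governed by the equation obtained by differentiating the Liouville equation in $s_\alpha$; the resulting vertical $(0,1)$-component is precisely the harmonic Beltrami differential $\mu_\alpha$ representing $\partial_{s_\alpha}$ as a Kodaira-Spencer class, with harmonicity a consequence of the Liouville equation. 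I would then expand $\Omega^2=\big(\tfrac{i}{2}\big)^2(\partial\bar\partial\phi)^2$ and isolate the $(2,2)$-part carrying a single $dz\wedge d\bar z$ and a single $ds_\alpha\wedge d\bar s_\beta$; this produces the Hessian-type combination $\phi_{z\bar z}\phi_{s_\alpha\bar s_\beta}-\phi_{z\bar s_\beta}\phi_{s_\alpha\bar z}$, the only terms that survive fibre integration.

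Integrating over the fibre, I would substitute $\phi_{z\bar z}=\tfrac12 e^{\phi}$ and use integration by parts along $X_s$ together with the Liouville equation and its $s_\alpha$-derivative; the effect is to replace the raw variation $\phi_{s_\alpha}$ by its harmonic projection, so that the surviving integrand becomes $\mu_\alpha\overline{\mu_\beta}$ paired against the conical area form. The result is the $L^2$-inner product $\int_{X_s}\mu_\alpha\,\overline{\mu_\beta}\,dA_{\mathbf{a}}$, with $dA_{\mathbf{a}}=\tfrac{i}{2}e^{\phi}dz\wedge d\bar z$; by definition this is the coefficient of $\omega^{WP}_{\mathbf{L}}$ in the coordinates $s_\alpha$, and matching the numerical constant pins down the factor $\tfrac12$ in \eqref{intfibre}.

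The main obstacle is the analysis near the cone divisor $D$, where $e^{\phi}\sim|z|^{-2a_j}$ blows up, so $\Omega$ and $\Omega^2$ are singular and the fibre integral $\int_{X/S}\Omega^2$ is a priori improper. Here I would use McOwen's precise asymptotics \cite{McOPoi} for the conical metric to show that $\Omega_{X/S}(\mathbf{a})$, as the weighted first Chern form of $K_{X/S}(D)$, extends across $D$ as a closed positive current with no spurious contribution supported on $D$, and that all boundary terms produced when integrating by parts on the fibre with small disks of radius $\varepsilon$ excised around the cone points vanish as $\varepsilon\to0$; the constraint $0<a_j\le1$ is exactly what guarantees integrability of $e^{\phi}$ and the decay of these boundary integrals. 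Controlling this singular behaviour, rather than the essentially formal fibre computation above, is where the real content of the generalisation of Wolpert's theorem to the conical case resides.
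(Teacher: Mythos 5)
This statement is not proved in the paper at all: Theorem~\ref{STrThrm} is imported verbatim from Schumacher--Trapani \cite{STrWei}, so there is no internal proof to compare your attempt against. Judged against the actual proof in \cite{STrWei}, your sketch follows essentially the same route, which is the conical generalisation of Wolpert's fibre-integration argument \cite{WolChe}: local potential $\phi=\log g_{\mathbf{a}}$, fibrewise Liouville equation $\phi_{z\bar z}=\tfrac12 e^{\phi}$, type decomposition of $\partial\bar\partial\phi$, identification of the vertical correction of the horizontal lift with a harmonic Beltrami differential, and fibre integration yielding the $L^2$-pairing that defines $\omega^{WP}_{\mathbf{L}}$, with the genuine analytic content concentrated in the behaviour near the cone divisor $D$. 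Two caveats on that last point. First, the family regularity you need --- that $\phi$ and its $s$-derivatives admit asymptotic expansions near $D$ uniformly in $s$, which is exactly what justifies differentiating under the fibre integral and killing the $\varepsilon$-boundary terms --- is not contained in McOwen \cite{McOPoi}, which is a fixed-surface existence and uniqueness theorem; it is supplied by Schumacher--Trapani's earlier variation paper \cite{STrVar}, which the present paper itself invokes for the $2\pi$-degeneration. Second, your claim that the vertical $(0,1)$-component is ``precisely the harmonic Beltrami differential'' is not formal in the conical setting: one must know that harmonic representatives with the appropriate decay at the cone points exist, are the ones computing Kodaira--Spencer classes, and give a finite pairing against the conical area form (the weight range $0<a_j\le1$ enters here as well as in the integrability of $e^{\phi}$); establishing this conical harmonic theory is part of the content of \cite{STrWei} rather than something to be cited away. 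With those attributions corrected, your outline is a faithful reconstruction of the cited proof rather than an alternative to it.
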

The following theorem shows that the Weil-Petersson form $\omega^{\text WP}_{\mathbf{L}}$ extends to the compactification, and hence it defines a cohomology class on Hassett's compactifications.
\begin{thrm}[\cite{ANoVol}]\label{comsumpi}
Given $\mathbf{L}=(L_1,...,L_n)\in\{i\hspace{.5mm}[0,2\pi)\}^n$ with $\modm_{g,n}^{\text{hyp}}(\mathbf{L})$ non-empty, then the Weil-Petersson form $\omega^{\text{WP}}_{\mathbf{L}}$ extends to the natural compactification $\overline{\modm}_{g,n}^{\text{hyp}}(\mathbf{L})$ by nodal hyperbolic surfaces which fits into the following commutative diagram:
\begin{equation}  \label{homcomwt}
\begin{array}{ccc}\modm_{g,n}&\stackrel{\cong}{\longrightarrow}&\modm_{g,n}^{\text{hyp}}(\mathbf{L})\\
\downarrow&&\downarrow\\
\overline{\modm}_{g,\mathbf{a}}&\stackrel{\cong}{\longrightarrow}&\overline{\modm}_{g,n}^{\text{hyp}}(\mathbf{L})
\end{array}
\end{equation}
where $\overline{\modm}_{g,\mathbf{a}}$ is the Hassett compactification with weights defined by $a_j=a(\theta_j)=a(-iL_j)$.
\end{thrm}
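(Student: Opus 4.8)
The plan is to realise $\omega^{\text{WP}}_{\mathbf{L}}$ as a fibre integral over the universal curve and to extend this description across the boundary of the Hassett compactification. By Theorem~\ref{STrThrm}, on the open locus $\omega^{\text{WP}}_{\mathbf{L}}=\tfrac12\pi_*\big(\Omega_{X/S}(\mathbf{a})^2\big)$ with $\mathbf{a}=\mathbf{a}(\boldsymbol{\theta})$, where $\Omega_{X/S}(\mathbf{a})=\tfrac{i}{2}\partial\bar\partial\log g_{\mathbf{a}}$ is the weight-adjusted curvature of the Hermitian metric on the relative log-canonical bundle $K_{X/S}(D)$ coming from the fibrewise conical hyperbolic metric. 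I would extend the universal family $\pi\colon X\to S=\modm_{g,n}$ to the universal curve $\overline{\pi}\colon\overline{X}\to\overline{\modm}_{g,\mathbf{a}}$, extend $\Omega$ as a closed current, and show that $\tfrac12\overline{\pi}_*(\Omega^2)$ is a closed current on $\overline{\modm}_{g,\mathbf{a}}$ restricting to $\omega^{\text{WP}}_{\mathbf{L}}$ and carrying no spurious boundary mass, so that it computes a cohomology class whose integral gives the volume.

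First I would establish the bottom isomorphism and commutativity. On the open locus, the existence and uniqueness of a conical hyperbolic metric with angles $\boldsymbol{\theta}$ on each conformal fibre (McOwen~\cite{McOPoi}) gives the top isomorphism $\modm_{g,n}\xrightarrow{\cong}\modm_{g,n}^{\text{hyp}}(\mathbf{L})$. To extend it across the boundary I would invoke the degeneration analysis of this section: as a cluster of cone points indexed by $J$ is brought together, a surrounding geodesic pinches to a node precisely when $\sum_{j\in J}\theta_j<2\pi(|J|-1)$, i.e. $\sum_{j\in J}a_j>1$, whereas the points coalesce into a single cone point of angle $\sum_{j\in J}\theta_j-2\pi(|J|-1)$ precisely when $\sum_{j\in J}a_j<1$. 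Every limiting nodal surface has irreducible components of negative weighted Euler characteristic, hence a conical hyperbolic metric by McOwen, so these limits are exactly the $\mathbf{a}$-stable curves. This matches the boundary strata of $\overline{\modm}_{g,n}^{\text{hyp}}(\mathbf{L})$ with those of $\overline{\modm}_{g,\mathbf{a}}$, giving \eqref{Hasiso}; commutativity of \eqref{homcomwt} is then immediate, as both horizontal arrows assign the same conical hyperbolic metric to a given (possibly nodal) conformal curve.

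For the form, I note that $K_{X/S}(D)$ extends algebraically to the relative log-canonical bundle $\omega_{\overline{\pi}}(D)$ over $\overline{X}$, so its Chern class, and hence $\overline{\pi}_*(c_1^2)$, is defined on $\overline{\modm}_{g,\mathbf{a}}$ at the level of cohomology; this is exactly the class appearing in the definition $\kappa_1=\pi_*(c_1(\omega_\pi(\sum D_i))^2)$ of Section~\ref{sec:kappa}, so that the extended class is a weighted variant of $\kappa_1$ as computed in Proposition~\ref{prop:kappaclass}. The substantive point is analytic: working in plumbing coordinates $zw=t$ transverse to a boundary divisor, one must control the conical hyperbolic metric as $t\to0$, where it develops a pair of cusps along the vanishing cycle with the standard model $g_{\mathbf{a}}\sim|z|^{-2}(\log|z|)^{-2}$, uniformly in the transverse moduli, generalising Wolpert's asymptotics for the Weil-Petersson metric to the presence of cone points. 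From such estimates I would show that $\Omega_{X/S}(\mathbf{a})$ extends to a closed positive current on $\overline{X}$ whose square is integrable along each fibre---integrability holds because the hyperbolic area form is integrable near both cusps and cone points---so that $\tfrac12\overline{\pi}_*(\Omega^2)$ is a closed current representing the above cohomology class and extending $\omega^{\text{WP}}_{\mathbf{L}}$.

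The hard part is precisely this uniform plumbing estimate for the conical metric and the resulting regularity of the push-forward. One must verify that near a forming node the fibrewise integral of $\Omega^2$ neither diverges nor acquires distributional boundary terms that would violate closedness or contribute extra mass to $\int\exp\omega^{\text{WP}}_{\mathbf{L}}$; equivalently, after absorbing a locally bounded potential, $\overline{\pi}_*(\Omega^2)$ should define an honest cohomology class rather than a current with singular support on the boundary. Once this estimate is in hand, closedness of the extension and commutativity of \eqref{homcomwt} follow formally, and the volume computed on the open part agrees with the intersection number of the extended class over $\overline{\modm}_{g,\mathbf{a}}$.
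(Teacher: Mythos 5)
You should note at the outset that the paper does not prove Theorem~\ref{comsumpi} at all: it is imported verbatim from \cite{ANoVol}, and the only in-paper material is the short sketch in Section~\ref{sec:hass} of why limiting nodal hyperbolic surfaces match Hassett's $\mathbf{a}$-stability. Your first half essentially reproduces that sketch (cone points repel and pinch a node when $\sum_{j\in J}a_j>1$, coalesce to a single cone point of angle $\sum_{j\in J}\theta_j-2\pi(|J|-1)$ when $\sum_{j\in J}a_j<1$, with McOwen \cite{McOPoi} supplying conical metrics on the components), and that part is consistent with the paper. However, matching the boundary strata set-theoretically does not by itself yield the bottom isomorphism in \eqref{homcomwt}: one must show the identification \eqref{Hasiso} is a homeomorphism, i.e.\ that convergence of conformal structures in $\overline{\modm}_{g,\mathbf{a}}$ corresponds to geometric convergence of the conical hyperbolic structures uniformly in degenerating families. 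That continuity statement is the actual content of the compactification result in \cite{ANoVol}, and your argument passes over it.

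The more serious gap is in the extension of $\omega^{\text{WP}}_{\mathbf{L}}$ itself. You correctly reduce the problem to uniform asymptotics of the conical hyperbolic metric $g_{\mathbf{a}}$ in plumbing coordinates $zw=t$ as $t\to0$, generalising Wolpert's cusp model, and you explicitly label this ``the hard part''---but you then assume it rather than prove it. Since everything else in your argument, as you yourself say, ``follows formally'' once that estimate is in hand, what you have written is a strategy outline, not a proof: the uniform control of $g_{\mathbf{a}}$ near the vanishing cycle in the presence of cone points, the fibrewise integrability of $\Omega_{X/S}(\mathbf{a})^2$ with constants uniform in the transverse moduli (needed so that $\overline{\pi}_*(\Omega^2)$ acquires no mass on boundary divisors), and the closedness of the extended current all hinge on precisely the estimate you postpone. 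The cited proof in \cite{ANoVol} supplies this by building on the Schumacher--Trapani analysis of families of conical metrics \cite{STrWei}; to make your proposal complete you would need to carry out that degeneration analysis (or quote it), not merely identify where it is required.
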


\subsection{The class of the Weil-Petersson form}
On 
a compactification $\overline{X}/\overline{S}$ by a family of stable curves of $\pi:X\to S$, the $(1,1)$ form $\Omega_{X/S}(\mathbf{a})$ represents the following cohomology class:
\[ 2\pi\omega_\pi(\mathbf{a}\cdot D) :=2\pi(\omega_\pi+\mathbf{a}\cdot D)\in H^2(\overline{\modm}_{g,\cc},\br)
\]
where $\omega_\pi$ is the first Chern class of the relative canonical line bundle over $\overline{X}$.

Then
\begin{eqnarray*}
(2\pi\omega_\pi(\mathbf{a}\cdot D))^2&=&4\pi^2(\omega_\pi(D)+(\bold{a}-1)\cdot D)^2\\
&=&4\pi^2\left[\omega_\pi(D)^2+2(\bold{a}-1)\cdot D\cdot\omega_\pi(D)+((\bold{a}-1)\cdot D)^2\right],  
\end{eqnarray*}
and hence
\begin{eqnarray*}
\frac{1}{2}\pi_*((2\pi\omega_\pi(\mathbf{a}\cdot D))^2)&=&2\pi^2\big[\frac{1}{2}\pi_*(\omega_\pi(D)^2)+2\sum_{\cc(i,j)=0} (a_i+a_j-2)D_{i,j}\\
&&\quad-\sum(1-a_i)^2\psi_i+\sum_{\cc(i,j)=0}(1-a_i)(1-a_j)D_{i,j}\big]\\
&=&2\pi^2\big[\kappa_1-\sum (1-a_j)^2\psi_j+\sum_{\cc(i,j)=0}(2a_ia_j-2)D_{i,j}\big]
\end{eqnarray*}
which uses $\kappa_1:=\pi_*([\omega_\pi(D)]^2)$, $\pi_*(D_j\cdot\omega_\pi)=\psi_j$ and
\begin{eqnarray*}
\pi_*(D_i\cdot D_j)&=&\begin{cases}
-\psi_j&\text{ for $i=j$}\\
0&\text{ for $i\ne j$ and $a_i+a_j>1$}\\
D_{i,j}&\text{ for $i\ne j$ and $a_i+a_j\leq1$}. 
\end{cases}
\end{eqnarray*}

We use the push-forward to define a class $\gamma(\bold{a})$:
$$\frac{1}{2}\pi_*(2\pi\omega_\pi(\mathbf{a}\cdot D))^2)=2\pi^2\gamma(\bold{a}).$$
\begin{definition}  \label{def:gamma}
Define $\gamma(\bold{a})\in H^2(\Mbar{g}{\cc},\bq)[\bold{a}]$ by
\begin{equation}  \label{gama}
\gamma(\bold{a}):=\kappa_1-\sum (1-a_j)^2\psi_j+\sum_{\cc(i,j)=0}(2a_ia_j-2)D_{i,j}.
\end{equation}
When is it unclear what chamber we are working in, we write $\gamma_{\cc}(\b{a})$. 
\end{definition}
In \cite[Equation (1.2)]{ETu2Dd} a version of this formula over the Deligne-Mumford compactification is derived via a naturality and uniqueness argument, producing an element of $H^2(\overline{\modm}_{g,n},\bq)[\bold{a}]$.  Their formula involves a sum over all $S$ such that $\cc(S)=0$, whereas \eqref{gama} involves only those terms with $|S|=2$.    The natural map $\overline{\modm}_{g,n}\to\Mbar{g}{\cc}$ allows one to work over the Deligne-Mumford compactification, and inside the subring $H^*(\Mbar{g}{\cc})\to H^*(\overline{\modm}_{g,n})$.  The image of the formula \eqref{gama} under this map into $H^*(\overline{\modm}_{g,n})$ produces the formula \cite[Equation (1.2)]{ETu2Dd} (up to extra $\psi$  classes representing geodesic boundary components that can also be included here).  This is a consequence of Lemma~\ref{lem:pullbackgamma} below.
\begin{definition}  \label{def:vol}
Given a chamber $\cc\subset\cD_{g,n}$, define the volume polynomial $V_{g,\cc}$ by 
\begin{equation}  \label{volalg}
V_{g,\cc}(i\b{\theta}):=\int_{\Mbar{g}{\cc}}\exp(2\pi^2\gamma(\bold{a}))\in\br[\theta_1,...,\theta_n]
\end{equation}
where $\b{a}_j=a(\theta_j)$. 
\end{definition}
From \eqref{intfibre}, we have 
\[V_{g,\cc}(i\b{\theta})=\mathrm{Vol}\big(\cM^{\text{hyp}}_{g,n}(i\boldsymbol{\theta})\big),\quad \text{when }\b{a}(\b{\theta})\in\cc.\] 
The formula \eqref{volalg} agrees with Mirzakhani's formula \cite{MirWei} when $\cc=\cc^M$ and generalises it to all chambers.  Although the polynomial is defined for $\b{a}(\b{\theta})\not\in\cc$, evaluation outside of the chamber will not, in general, return the volume of a moduli space.

The top degree terms of $V_{g,\cc}(i\b{\theta})$ come from the top degree terms:
\[2\pi^2\gamma(\bold{a}(\bold{\theta}))=-\tfrac12\sum \theta_j^2\psi_j+\sum_{\cc(i,j)=0}\theta_i\theta_jD_{i,j}+\text{lower order terms}\]
hence involve only $\psi$ classes and the divisors $D_{i,j}$.  We expect that the top degree part of the wall-crossing should be related to the wall-crossing formulae for descendant potentials found in \cite{BCaWal}.

We conclude this section by recording how $\gamma(\bold{a})$ transforms under pullbacks of reduction and forgetful morphisms.

\begin{lemma}\label{lem:pullbackgamma}
If $\pi:\Mbar{g}{\cc}\longrightarrow \Mbar{g}{\cc'}$ is a reduction morphism with reduction sets $\mathcal{S}$ then
$$\pi^*\gamma(\bold{a})=\gamma(\bold{a})+\sum_{J\in\mathcal{S}}(1-\sum_{j\in J}a_j)^2\delta_{0:J}.$$
If $\varphi:\Mbar{g}{\cc}\longrightarrow \Mbar{g}{\cc'}$ is the forgetful morphism that forgets the last point $p_n$ with contraction sets $\mathcal{T}$, then
$$\varphi^*\gamma(\bold{a})= \gamma(\bold{a})-\sum_{\cc(i,n)=0}2a_ia_nD_{i,n}+a_n(a_n-2)\psi_n+\sum_{S\in\mathcal{T}}(1-\hspace{-3mm}\sum_{j\in S-\{n\}}\hspace{-2mm}a_j)^2 \delta_{0:S}.       $$
\end{lemma}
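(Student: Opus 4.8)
The plan is to prove both statements by direct computation: expand the target class $\gamma$ via Definition~\ref{def:gamma}, substitute the known pullback formulas for $\kappa_1$, the $\psi_j$ and the $D_{i,j}$, and then collect. The non-exceptional parts of these pullbacks will reassemble into $\gamma(\bold{a})$ on the source, while the remaining contributions will organise themselves, for each relevant set $S$, into a single coefficient of $\delta_{0:S}$. The entire computation hinges on one elementary algebraic identity (stated below), applied to $S$ in the reduction case and to $S\setminus\{n\}$ in the forgetful case.

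For the reduction morphism $\pi=\pi_{\bold{a},\bold{b}}$ I would substitute into $\pi^*\gamma(\bold{a})=\pi^*\kappa_1-\sum_j(1-a_j)^2\pi^*\psi_j+\sum_{\cc'(i,j)=0}(2a_ia_j-2)\pi^*D_{i,j}$ the formula $\pi^*\kappa_1=\kappa_1+\sum_{S\in\mathcal{S}}(|S|-1)^2\delta_{0:S}$ from Lemma~\ref{lem:kappapullback1}, together with $\pi^*\psi_j=\psi_j-\sum_{S\in N(j)}\delta_{0:S}$ and the two-branch formula for $\pi^*D_{i,j}$ from \S\ref{sec:HassRed}. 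The honest pieces ($\kappa_1$, the $\psi_j$, and the $D_{i,j}$ with $\cc(i,j)=0$) reassemble exactly into $\gamma(\bold{a})$ on $\Mbar{g}{\cc}$, since the target $D$-sum splits into pairs with $\cc(i,j)=0$ (which reproduce the source $D$-term) and pairs $\{i,j\}\in\mathcal{S}$ (which feed only into corrections). The residual coefficient of each $\delta_{0:S}$, $S\in\mathcal{S}$, is then $(|S|-1)^2$ from $\kappa_1$, $\sum_{j\in S}(1-a_j)^2$ from the $\psi$-terms, and $\sum_{\{i,j\}\subseteq S}(2a_ia_j-2)$ from the $D$-terms, where one uses $\sum_{j\in S}b_j\le 1$ so that every pair inside $S$ genuinely lies below the wall.

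For the forgetful morphism $\varphi=\varphi_{\bold{a},\bold{a}'}$ forgetting $p_n$ the procedure is identical, except that $\gamma(\bold{a}')$ involves only the points $1,\dots,n-1$, so $\psi_n$ and the divisors $D_{i,n}$ are absent from the target and must be produced by the pullbacks. Using $\varphi^*\kappa_1=\kappa_1-\psi_n-2\sum_{a_i+a_n\le 1}D_{i,n}+\sum_{S\in\mathcal{T}}(|S|-2)^2\delta_{0:S}$ from Lemma~\ref{lem:kappapullback2}, together with $\varphi^*\psi_j=\psi_j-\sum_{S\in M(j)}\delta_{0:S}$ and $\varphi^*D_{i,j}=D_{i,j}+\sum_{S\in\mathcal{T},\,\{i,j\}\subseteq S}\delta_{0:S}$, I would check that the $\psi_n$-coefficient is $(1-a_n)^2-1=a_n(a_n-2)$, the $D_{i,n}$-coefficient is $-2-(2a_ia_n-2)=-2a_ia_n$, the honest $\psi_j,D_{i,j}$ terms with $i,j\le n-1$ reproduce $\gamma(\bold{a})$, and for each $S\in\mathcal{T}$ (so $n\in S$ and $\sum_{j\in S\setminus\{n\}}a_j\le 1$) the coefficient of $\delta_{0:S}$ is $(|S|-2)^2+\sum_{j\in S\setminus\{n\}}(1-a_j)^2+\sum_{\{i,j\}\subseteq S\setminus\{n\}}(2a_ia_j-2)$.

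The only real content is the elementary identity to which both coefficient collections reduce: for any finite index set $T$ with weights $(a_j)_{j\in T}$,
\[
(|T|-1)^2+\sum_{j\in T}(1-a_j)^2+\sum_{\{i,j\}\subseteq T}(2a_ia_j-2)=\Big(1-\sum_{j\in T}a_j\Big)^2,
\]
which follows by writing $\sum_{\{i,j\}\subseteq T}a_ia_j=\tfrac12\big((\sum_{j\in T}a_j)^2-\sum_{j\in T}a_j^2\big)$ and simplifying. Taking $T=S$ yields the reduction formula and $T=S\setminus\{n\}$ (so $|T|=|S|-1$, matching the $(|S|-2)^2$ term) yields the forgetful formula. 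The main difficulty is bookkeeping rather than anything deep: one must ensure each $\delta_{0:S}$ is counted with the correct multiplicity, in particular handling the two branches of $\pi^*D_{i,j}$ so that the size-two reduction sets $S=\{i,j\}$ (where $\delta_{0:\{i,j\}}$ appears explicitly rather than through a larger set) are weighted exactly once, and confirming that for $S\in\mathcal{T}$ all $\binom{|S|-1}{2}$ interior pairs satisfy $\cc(i,j)=0$ so that the $D$-contribution is the full sum over $\{i,j\}\subseteq S\setminus\{n\}$.
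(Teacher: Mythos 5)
Your proposal is correct and is exactly the route the paper takes: its proof of Lemma~\ref{lem:pullbackgamma} is a one-line deferral to Definition~\ref{def:gamma} and Lemmas~\ref{lem:kappapullback1} and~\ref{lem:kappapullback2}, and you have simply carried out the omitted substitution and bookkeeping. Your collected coefficients (including the two-branch handling of $\pi^*D_{i,j}$ and the observation that all pairs inside $S$, respectively $S\setminus\{n\}$, lie below the wall) are right, and the closing identity $(|T|-1)^2+\sum_{j\in T}(1-a_j)^2+\sum_{\{i,j\}\subseteq T}(2a_ia_j-2)=\bigl(1-\sum_{j\in T}a_j\bigr)^2$ checks out, yielding both stated formulas.
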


\begin{proof}
This follows from the definition of $\gamma(\bold{a})$ and Lemmas~\ref{lem:kappapullback1} and~\ref{lem:kappapullback2}. 
\end{proof}

\section{Proofs of main results}   \label{main}
In this section, we prove the three main theorems of the paper and the immediate geometric consequences. Theorem~\ref{g,2 volume} and its generalisation Theorem~\ref{wall-crossing formula} provide wall-crossing formulas which, in particular, recover the volume polynomial in any chamber from the volume in the main chamber given by Mirzakhani's volume recursion formula \cite{MirSim}.
Theorem~\ref{Thm:Dilaton} concerns the $2\pi$ limit of a cone angle. 


\subsection{Wall-crossing}
Any chamber $\cc\subset\cd_{g,n}$ incident to and above a wall $W_S$ uniquely determines a chamber $\cc'$ incident to and below $W_S$. We define the wall-crossing polynomial $\wc_{\cc,S}\in\br[\theta_1,...,\theta_n]$ by
\[ 
V_{g,\cc'}(i\boldsymbol{\theta}) = V_{g,\cc}(i\boldsymbol{\theta})+\wc_{\cc,S}(\boldsymbol{\theta}).
\]
Hence a wall-crossing polynomial is the difference between the volume polynomials of the two incident chambers. 

Denote by $\pi$ the reduction morphism between the moduli spaces specified by $\cc$ and $\cc'$. Evaluating the required top intersection numbers in the moduli space specified by the upper chamber, we obtain via Lemma~\ref{lem:pullbackgamma} the following expression for the wall-crossing:
\begin{eqnarray*}
 \wc_{\cc,S}(\boldsymbol{\theta})
 &=&\frac{(2\pi^2)^d}{d!}\int_{\Mbar{g}{\cc}}\left((\pi^*\gamma(\b{a}))^d-\gamma(\b{a})^d\right)\\
 &=& \frac{(2\pi^2)^d}{d!}\int_{\Mbar{g}{\cc}}\left((\gamma(\b{a})+(1-\sum_{j\in S}a_j)\delta_{0:S})^d-\gamma(\b{a})^d\right)
\end{eqnarray*}
where $d=3g-3+n$. To ease the notation we will sometimes omit the integral in denoting the top intersection number of a divisor. 

A wall $W_S$ may be incident to many chambers and the wall-crossing is in fact dependent on the incident chambers and not just the wall $W_S$. In \S\ref{ex:dependentwc} we provide an example where the wall-crossing polynomial is dependent on the incident chamber.
While in Corollary~\ref{cor:quotchambercrossing} and Proposition~\ref{prop:chambquotequiv} we give conditions under which the wall-crossing polynomials from different chambers incident to the same wall are equal hence depend only on the wall.

\subsection{Wall-crossing formula}  
In this section we prove Theorem~\ref{g,2 volume} and Theorem~\ref{wall-crossing formula} and discuss the consequences. Though Theorem~\ref{g,2 volume} is a special case of Theorem~\ref{wall-crossing formula}, the proof of the Theorem~\ref{g,2 volume} is less technical and hence perhaps helpful to the reader.

\begin{customthm}{1}
 Let $g\in\bbZ_{>0}$ and $\theta_1,\theta_2\in(0,2\pi)$ be such that $\theta_1+\theta_2> 2\pi$. Then the volume of $\cM^{\text{hyp}}_{g,2}(i\theta_1,i\theta_2)$ is a polynomial in $\theta_1,\theta_2$ satisfying
    \begin{equation*}
        \mathrm{Vol}\big(\cM^{\text{hyp}}_{g,2}(i\theta_1,i\theta_2)\big) = V^{\mathrm{Mirz}}_{g,2}(i\theta_1,i\theta_2)+\int_0^\phi V^{\mathrm{Mirz}}_{g,1}(i\theta)\cdot\theta d\theta,
    \end{equation*}
    where $\phi=\theta_1+\theta_2-2\pi$. 
\end{customthm}

\begin{proof}
  The moduli space $\cM^{\text{hyp}}_{g,2}(i\theta_1,i\theta_2)$ has a single wall $W_S$ corresponding to $S=\{1,2\}$, where the weights satisfy $a_1+a_2=1$ for $a_j=a(\theta_j)$. The volume polynomials are defined in \eqref{volalg} as top intersections in $\Mbar{g}{\cc}$, for some chamber $\cc\subset\cD_{g,n}$. Hence the wall-crossing polynomial $ \wc_{\cc^M,S}=\wc_{\cc^M,\{1,2\}}(\boldsymbol{\theta})$ is obtained from the difference of these top intersections in $\Mbar{g}{\cc^M}\cong\Mbar{g}{2}$. Let $d=3g-3+n$, 
 \begin{align} \begin{split}
   &\wc_{\cc^M,\{1,2\}}(\theta_1,\theta_2)  \\
   &=\frac{(2\pi^2)^{d}}{d!}\int_{\overline{\cM}_{g,2}}\left( \Big(\kappa_1-\sum_{j=1,2}\left(1-a_j\right)^2 \psi_j+(1-a_1-a_2)^2\delta_{0:\{1,2\}} \Big)^d \right. \\
   &\left. \hspace{5.75cm}-\Big(\kappa_1-\sum_{j=1,2}\left(1-a_j\right)^2 \psi_j\Big)^{d} \right). \label{step 1} \end{split}
\end{align}
Expand the first term of the integrand and cancel with the second term to get, \begin{equation*}
    \sum_{k=1}^{d}{d \choose k}\Big(\kappa_1-\sum_{j=1,2}\left(a_j-1\right)^2 \psi_j\Big)^{d-k}\big((1-a_1-a_2)^2\delta_{0:\{1,2\}}\big)^k.
\end{equation*}
Using $\psi_j\cdot\delta_{0:\{1,2\}}=0$ for $j=1,2$, this reduces to, \begin{equation} \label{step 2}
    \sum_{k=1}^{d}{d \choose k}\kappa_1^{d-k}\delta_{0:\{1,2\}}^k\,(1-a_1-a_2)^{2k}.
\end{equation}
In particular, we see that the wall-crossing polynomial depends only on $1-a_1-a_2$. \\ \indent 
Put \eqref{step 1} and \eqref{step 2} together to get, \begin{align}
     \wc_{\cc^M,\{1,2\}}(\b{\theta})&=\frac{(2\pi^2)^{d}}{d!}\sum_{k=1}^d{d \choose k}\int_{\overline{\cM}_{g,2}}\kappa_1^{d-k}\delta_{0:\{1,2\}}^k\,(1-a_1-a_2)^{2k} \nonumber \\
    &=\frac{(2\pi^2)^{d}}{d!}\sum_{k=1}^d{d \choose k}\int_{\overline{\cM}_{g,1}}(-1)^{k-1}\kappa_1^{d-k}\psi_1^{k-1}\,(1-a_1-a_2)^{2k}  \label{line 2} \\ 
    &=\frac{(2\pi^2)^{d}}{d!}\sum_{k=0}^{d-1}{d \choose k+1}\int_{\overline{\cM}_{g,1}}(-1)^{k}\kappa_1^{d-1-k}\psi_1^{k}\,(1-a_1-a_2)^{2(k+1)} \nonumber \\
    &=\sum_{k=0}^{d-1}K'_k(1-a_1-a_2)^{2(k+1)}, \nonumber
\end{align}
where the constants $K'_k$ are given by $K'_k=(-1)^k\frac{(2\pi^2)^{d}}{d!}{d \choose k}\kappa_1^{d-1-k}\cdot\psi_1^{k}$. Here, in \eqref{line 2} we have used $\overline{\cM}_{g,1}\cong\delta_{0:\{1,2\}}$, and so $\kappa_1^{d-k}\cdot\delta_{0:\{1,2\}}^k$ is equal to the top intersection in $\overline{\cM}_{g,1}$ of the divisor $\kappa_1^{d-k}\cdot(-\psi_1)^{k-1}$. Explicitly in terms of angles $\theta_1$ and $\theta_2$,  the wall-crossing polynomial is \begin{equation*}
     \wc_{\cc^M,\{1,2\}}(\theta_1,\theta_2)=\sum_{k=0}^{d-1}K_k(\theta_1+\theta_2-2\pi)^{2(k+1)},
\end{equation*}
where $K_k= \frac{K'_k}{ (2\pi)^{2(k+1)} }$. \\ \indent 
In what follows we find an expression for the intersection numbers $\kappa_1^{d-k}\cdot\delta_{0:\{1,2\}}^k$, in terms of $V^{\text{Mirz}}_{g,1}(i\theta)$ for $\theta\in(0,2\pi)$. By definition, \begin{align*}
    V^{\text{Mirz}}_{g,1}(i\theta):=&\frac{1}{(d-1)!}\int_{\overline{\cM}_{g,1}}\left(2\pi^2\kappa_1-\frac{1}{2}\theta^2\psi_1\right)^{d-1} \\ 
    =&\frac{1}{(d-1)!}\sum_{l=0}^{d-1}{d-1 \choose l}\left(2\pi^2\kappa_1\right)^{d-1-l}\cdot\left(-\frac{1}{2}\theta^2\psi_1\right)^l, \\
    =&\frac{1}{(d-1)!}\sum_{l=0}^{d-1}(-1)^l\frac{(2\pi^2)^{d-1-l}}{2^l}{d-1 \choose l}\kappa_1^{d-1-l}\cdot\psi_1^{l}\,\theta^{2l}.
\end{align*}
Since $$\kappa_1^{d-1-l}\cdot\psi_1^{l}=(-1)^l\frac{d!}{(2\pi^2)^{d}}\frac{1}{{d \choose l}}K_l,$$ it follows that 
\begin{align*}
    V^{\text{Mirz}}_{g,1}(i\theta)&=\sum_{l=0}^{d-1}\frac{(l+1)(2\pi^2)^{l+1}}{2^l}K'_l\,\theta^{2l}, \\
    &=\sum_{l=0}^{d-1}2(l+1)K_l\,\theta^{2l}.
\end{align*}
So $\wc_{\cc^M,\{1,2\}}(\theta_1,\theta_2)=\cL(V^{\text{Mirz}}_{g,1}(i\theta))$, where $\cL$ is a linear transformation that acts on polynomials by,\begin{equation*}
    \cL(\theta^{2l})=\begin{cases} 
      \frac{(\theta_1+\theta_2-2\pi)^{2(l+1)}}{2(l+1)} & l> 0 \\
      0 & l=0. 
   \end{cases}
\end{equation*}
This linear transformation is equivalent to integration, $\cL(\cdot)=\int_0^{\theta_1+\theta_2-2\pi}\theta d\theta(\cdot)$. So \begin{equation}
   \wc_{\cc^M,\{1,2\}}(\theta_1,\theta_2)=\int_0^{\theta_1+\theta_2-2\pi}V^{\text{Mirz}}_{g,1}(i\theta)\cdot\theta d\theta.
    \end{equation}
This gives the desired result. 
\end{proof}
In the limit as a cone angle approaches $2\pi$ we observe a particular case 
of Theorem~\ref{Thm:Dilaton}.
\begin{corollary} \label{g,2 volume corollary}
Let $g\in\bbZ_{>0}$. In the $2\pi$ cone angle limit, the $n=2$ volumes satisfy:
    \begin{align*} 
          &\lim_{\theta_2\to 2\pi} \mathrm{Vol}\big(\cM^{\text{hyp}}_{g,2}(i\theta_1,i\theta_2)\big)=0,\hspace{0.2cm} \text{and} \\
          &\lim_{\theta_2\to 2\pi}\frac{\partial}{\partial \theta_2}   \mathrm{Vol}\big(\cM^{\text{hyp}}_{g,2}(i\theta_1,i\theta_2)\big) =2\pi \left(1-2g+\tfrac{\theta_1}{2\pi}\right)V^{\text{Mirz}}_{g,1}(i\theta_1).
    \end{align*}
\end{corollary}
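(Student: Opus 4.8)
The plan is to deduce both limits directly from Theorem~\ref{g,2 volume} together with the relations \eqref{limitdil1} and \eqref{limitdil2} of \cite{DNoWei}, specialised to $n=1$. First I would record that as $\theta_2\to2\pi$ the upper limit of integration $\phi=\theta_1+\theta_2-2\pi$ tends to $\theta_1$, so that Theorem~\ref{g,2 volume} gives
\[
\lim_{\theta_2\to2\pi}\mathrm{Vol}\big(\cM^{\text{hyp}}_{g,2}(i\theta_1,i\theta_2)\big)=V^{\mathrm{Mirz}}_{g,2}(i\theta_1,2\pi i)+\int_0^{\theta_1}V^{\mathrm{Mirz}}_{g,1}(i\theta)\,\theta\,d\theta.
\]

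For the first limit, I would convert relation \eqref{limitdil1} with $n=1$, namely $V^{\mathrm{Mirz}}_{g,2}(L_1,2\pi i)=\int_0^{L_1}L\,V^{\mathrm{Mirz}}_{g,1}(L)\,dL$, from the length variable to the angle variable by substituting $L=i\theta$, under which $L\,dL=-\theta\,d\theta$. This yields $V^{\mathrm{Mirz}}_{g,2}(i\theta_1,2\pi i)=-\int_0^{\theta_1}V^{\mathrm{Mirz}}_{g,1}(i\theta)\,\theta\,d\theta$, which is exactly the negative of the integral term above, so the two cancel and the first limit is $0$.

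For the derivative I would differentiate the formula of Theorem~\ref{g,2 volume} in $\theta_2$. The fundamental theorem of calculus contributes a boundary term $V^{\mathrm{Mirz}}_{g,1}(i\phi)\,\phi$ (using $\partial\phi/\partial\theta_2=1$), while the remaining term is $\partial_{\theta_2}V^{\mathrm{Mirz}}_{g,2}(i\theta_1,i\theta_2)$. Writing $\partial_{\theta_2}=i\,\partial_{L_2}$ and letting $\theta_2\to2\pi$ (so $L_2\to2\pi i$ and $\phi\to\theta_1$), relation \eqref{limitdil2} with $n=1$ gives $\partial_{L_2}V^{\mathrm{Mirz}}_{g,2}(i\theta_1,2\pi i)=2\pi i(2g-1)V^{\mathrm{Mirz}}_{g,1}(i\theta_1)$, whence the first term tends to $-2\pi(2g-1)V^{\mathrm{Mirz}}_{g,1}(i\theta_1)$. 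Adding the boundary term $\theta_1 V^{\mathrm{Mirz}}_{g,1}(i\theta_1)$ and factoring out $2\pi$ produces the claimed value $2\pi\big(1-2g+\tfrac{\theta_1}{2\pi}\big)V^{\mathrm{Mirz}}_{g,1}(i\theta_1)$.

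The only delicate point is the bookkeeping of the factors of $i$ when passing between the length variables $L_j$ used in \eqref{limitdil1}--\eqref{limitdil2} and the angle variables $\theta_j$. In particular the identity $L\,dL=-\theta\,d\theta$ under $L=i\theta$ is what makes the cancellation in the first limit exact, and the chain-rule factor $\partial_{\theta_2}=i\,\partial_{L_2}$ combines with the explicit $i$ in \eqref{limitdil2} to produce the correct sign of the derivative term. No geometric input beyond Theorem~\ref{g,2 volume} and \eqref{limitdil1}--\eqref{limitdil2} is required; the content of the corollary is that the $2\pi$-limit behaviour already present in Mirzakhani's polynomials is inherited, after wall-crossing, by the volumes on the lower chamber.
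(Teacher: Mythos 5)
Your proposal is correct and takes essentially the same route as the paper's proof: both deduce the corollary from Theorem~\ref{g,2 volume} combined with \eqref{limitdil1}--\eqref{limitdil2} at $n=1$, using the same substitution $L=i\theta$ (so $L\,dL=-\theta\,d\theta$) to obtain the exact cancellation in the vanishing limit, and the same decomposition of the derivative into the boundary term $V^{\mathrm{Mirz}}_{g,1}(i\phi)\cdot\phi$ plus $\partial_{\theta_2}V^{\mathrm{Mirz}}_{g,2}(i\theta_1,2\pi i)=2\pi(1-2g)V^{\mathrm{Mirz}}_{g,1}(i\theta_1)$. Your bookkeeping of the factors of $i$ in passing between length and angle variables matches the paper's computation exactly.
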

\begin{proof}
From \eqref{limitdil1} we have 
\[V^{\text{Mirz}}_{g,2}(L_1,2\pi i)=\int_0^{L_1}LV^{\text{Mirz}}_{g,n}(L)dL\Rightarrow V^{\text{Mirz}}_{g,2}(i\theta_1,2\pi i)=-\int_0^{\theta_1}\theta V^{\text{Mirz}}_{g,n}(i\theta)d\theta.
\]
Hence by Theorem~\ref{g,2 volume}
\[\mathrm{Vol}\big(\cM^{\text{hyp}}_{g,2}(i\theta_1,2\pi i)\big) = -\int_0^{\theta_1}\theta V^{\text{Mirz}}_{g,n}(i\theta)d\theta+\int_0^\phi V^{\mathrm{Mirz}}_{g,1}(i\theta)\cdot\theta d\theta=0
\]
since $\phi=\theta_1+\theta_2-2\pi\stackrel{\theta_2=2\pi}{\longmapsto}\theta_1$.
Similarly, from \eqref{limitdil2} we have 
\[\frac{\partial V^{\text{Mirz}}_{g,2}}{\partial L_{2}}(L_1,2\pi i)=2\pi (2g-1)iV^{\text{Mirz}}_{g,n}(L_1)\]
hence
\[\frac{\partial V^{\text{Mirz}}_{g,2}}{\partial \theta_{2}}(i\theta_1,2\pi i)=2\pi(1-2g)V^{\text{Mirz}}_{g,n}(i\theta_1).
\]
Using 
\[\frac{\partial}{\partial \theta_{2}}\wc_{\cc^M,\{1,2\}}(\theta_1,\theta_2)=\frac{\partial}{\partial \theta_{2}}\int_0^\phi V^{\text{Mirz}}_{g,1}(i\theta)\cdot\theta d\theta=V^{\text{Mirz}}_{g,1}(i\phi)\cdot\phi
\]
with Theorem~\ref{g,2 volume}, we have
\[\left.\frac{\partial}{\partial \theta_{2}}\right|_{\theta_2=2\pi}\hspace{-4mm}\mathrm{Vol}\big(\cM^{\text{hyp}}_{g,2}(i\theta_1,i\theta_2)\big) = 2\pi\left(1-2g+\tfrac{\theta_1}{2\pi}\right) V^{\text{Mirz}}_{g,n}(i\theta_1)
\]
as required.
\end{proof}
\begin{remark}
The proof of Corollary~\ref{g,2 volume corollary} generalises to any $n$ and can be used in reverse to prove \eqref{limitdil1} from the vanishing property \eqref{eqn:incidentzero}.
\end{remark}


The proof of the more general wall-crossing formula in Theorem~\ref{wall-crossing formula} follows similar ideas to that of Theorem~\ref{g,2 volume} albeit  more technical.

\thmtwo*

\begin{proof}
The volume polynomials are expressed in \eqref{volalg} as top intersections in $\Mbar{g}{\cc'}$, for some chamber $\cc' \subset\cD_{g,n}$. Hence the wall-crossing polynomial $\wc_{\cc,S}=\wc_{\cc,S} (\boldsymbol{\theta})$ is obtained from the difference of these top intersections in $\Mbar{g}{\cc}$. Throughout this proof we omit integral signs $\int_{\Mbar{g}{\cc}}$ and write products of cohomology classes to mean top intersection numbers of divisors.  Let $d=3g-3+n$,  
\begin{align*}
    \wc_{\cc,S}
    = \frac{(2\pi^2)^{d}}{d!}&\left(\Big(\kappa_1-\sum_{j=1}^n(1-a_j)^2\psi_j +(1-\sum_{j\in S}a_j)^2\delta_{0:S}\Big)^{d}\right. \\
    &\hspace{5.2cm}\left.-\Big(\kappa_1-\sum_{j=1}^n(1-a_j)^2\psi_j\Big)^{d} \right)  \end{align*}
 where $a_j=a(\theta_j)$. 
Hence
\begin{align*}
 \wc_{\cc,S}&= \frac{(2\pi^2)^{d}}{d!} \sum_{k=1}^d\binom{d}{k} \Big(\kappa_1-\sum_{j=1}^n(1-a_j)^2\psi_j\Big)^{d-k}\left(\Big(1-\sum_{j\in S}a_j\Big)^2\delta_{0:S}\right)^k\\
 &= \frac{(2\pi^2)^{d}}{d!} \Big(1-\sum_{j\in S}a_j\Big)^2\delta_{0:S}\sum_{k=1}^d\binom{d}{k}\Big(\kappa_1-\sum_{j=1}^n(1-a_j)^2\psi_j\Big)^{d-k} \\
 &\hspace{7.2cm} \cdot\left(\Big(1-\sum_{j\in S}a_j\Big)^2\delta_{0:S}\right)^{k-1}.
\end{align*}
Now by the identification $\delta_{0:S}\cong \overline{\cM}_{0,\cc/S}\times\overline{\cM}_{0,\cc_1}$, and denoting the projections to the factors as $\pi_1$ and $\pi_2$ respectively, we denote
$$\kappa_1':=\pi_1^*\kappa_1,\hspace{1cm} \kappa_1'':=\pi_2^*\kappa_1, $$
$$\psi_j=\begin{cases}\pi_1^*\psi_j&\text{ for $j\in S^c$}\\
\pi_2^*\psi_{j)}&\text{ for $j\in S$}\end{cases},$$
$$ \psi_\alpha:=\pi_1^*\psi_{n-|S|+1},\hspace{1cm}\psi_\beta:= \pi_2^*\psi_1.$$ 
Further, we observe that the normal bundle of $\delta_{0:S}$ is the class $(-1)(\psi_\alpha+\psi_\beta)$ and in an attempt to control notation we denote
$$\kappa_\alpha:=\kappa_1'-\sum_{j\in S^c\cup \{\alpha\}}(1-a_j)^2\psi_j\text{ and } \kappa_\beta:=\kappa_1''-\sum_{j\in S\cup\{\beta\}}^{n}(1-a_j)^2\psi_j.$$
Our final observation is that 
\begin{equation}\label{eqn:int}
\prod_{i=1}^{d-k}\pi_1^*D_i\cdot \prod_{i=1}^{k-1}\pi_2^*E_i=0
\end{equation}
for $k\ne |S|-1$, given any classes $D_i\in H^2(\overline{\cM}_{0,\cc/S})$ and $E_i\in H^2(\overline{\cM}_{0,\cc_1})$. 

With all this in hand, set $|a_S|=\sum_{j\in S}a_j$ and by restriction of all classes to $\delta_{0:S}$,
we obtain that $\frac{d!}{(2\pi^2)^{d}} \wc_{\cc,S}$ is equal to
\begin{align*}
&(1-|a_S|)^2\sum_{k=1}^d(-1)^k\binom{d}{k}(\kappa_\alpha+\kappa_\beta)^{d-k}\Big((1-|a_S|)^2(-1)(\psi_\alpha+\psi_\beta)\Big)^{k-1}   \\
 =&\,(1-|a_S|)^2\sum_{k=1}^d\binom{d}{k}\sum_{t=0}^{d-k}\sum_{q=0}^{k-1}\binom{d-k}{t}\binom{k-1}{q} \\
 &\hspace{5.5cm}\cdot(-1)^{k-1}\kappa_\alpha^t\kappa_\beta^{d-k-t}\psi_\alpha^q\psi_\beta^{k-q-1}(1-|a_S|)^{2(k-1)}\\
 =&\,\sum_{k=1}^d\binom{d}{k}\sum_{j=1}^{\min\{k,|S|-1\}}(-1)^{k-1}(1-|a_S|)^{2k} \\
 &\hspace{3cm}\cdot\binom{d-k}{d-s-k+j+1}\binom{k-1}{j-1}\kappa_\alpha^{d-s-k+j+1}\psi_\alpha^{k-j} \kappa_\beta^{|S|-j-1}\psi_\beta^{j-1}.
 \end{align*}
Here the last line follows from applying \eqref{eqn:int}, which implies that only possibly monomials with $t+q=d-s+1$ are non-vanishing. Rearranging we obtain 
\begin{align*}
&\sum_{k=0}^{d-1}\binom{d}{k+1}\sum_{j=1}^{\min\{k+1,|S|-1\}}(-1)^{k}(1-|a_S|)^{2(k+1)}\\
&\hspace{3.4cm}\cdot\binom{d-k-1}{d-|S|-k+j}\binom{k}{j-1}\kappa_\alpha^{d-|S|-k+j}\psi_\alpha^{k-j+1} \kappa_\beta^{|S|-j-1}\psi_\beta^{j-1}.\\
\end{align*}
Hence,
\begin{eqnarray*}
\wc_{\cc,S}&=&\sum_{k=0}^{d-1}K_k(2\pi-\sum_{j\in S}(2\pi-\theta_j))^{2(k+1)} \\
&=&\sum_{k=0}^{d-1}K_k\phi_S^{2(k+1)},
\end{eqnarray*}
where 
\begin{align*} K_k=\frac{(-1)^{k}(2\pi^2)^d}{(2\pi)^{2(k+1)}d!}\binom{d}{k+1}\sum_{j=1}^{\min\{k+1,|S|-1\}}&\binom{d-k-1}{d-|S|-k+j}\binom{k}{j-1}\\ 
&\hspace{1.1cm}\cdot\kappa_\alpha^{d-|S|-k+j}\psi_\alpha^{k-j+1} \kappa_\beta^{|S|-j-1}\psi_\beta^{j-1}.\end{align*}
We now observe that setting $\theta=2\pi(1-a)$ we have
\begin{align*}
 V_{g,\cc/S}(i\b{\theta}_{S^c},i\theta)&=\frac{(2\pi^2)^{d-|S|+1}}{(d-|S|+1)!}\left(\kappa_{\alpha}-(1-a)^2\psi_\alpha  \right)^{d-|S|+1}\\
&=\frac{(2\pi^2)^{d-|S|+1}}{(d-|S|+1)!}\sum_{k=0}^{d-|S|+1}(-1)^k(1-a)^{2k}\binom{d-|S|+1}{k}\kappa_{\alpha}^{d-k-|S|+1}\psi_\alpha^k \\
&=\sum_{k=0}^{d-|S|+1}M_k\theta^{2k},
\end{align*}
for 
$$M_k= (-1)^k\frac{(2\pi^2)^{d-|S|+1}}{(2\pi)^{2k}(d-|S|+1)!}\binom{d-|S|+1}{k}\kappa_{\alpha}^{d-k-|S|+1}\psi_\alpha^k $$
and
\begin{align*}
V_{0,\cc_1}(i\theta,i\b{\theta}_S) &=\frac{(2\pi^2)^{|S|-2}}{(|S|-2)!}\left(\kappa_{\beta}-(1-a)^2\psi_\beta  \right)^{|S|-2}\\
&=\frac{(2\pi^2)^{|S|-2}}{(|S|-2)!}\sum_{k=0}^{|S|-2}(-1)^k(1-a)^{2k}\binom{|S|-2}{k}\kappa_{\beta}^{|S|-k-2}\psi_\beta^k \\
&=\sum_{k=0}^{|S|-2}N_k\theta^{2k},
\end{align*}
for 
$$N_k= (-1)^k\frac{(2\pi^2)^{|S|-2}}{(2\pi)^{2k}(|S|-2)!}\binom{|S|-2}{k}\kappa_{\beta}^{|S|-k-2}\psi_\beta^k $$

If we define $K'_k$ by the equation
$$\sum_{k=0}^dK'_k\theta^{2(k+1)}=\int_0^\theta \left(\sum_{k=0}^{d-|S|+1}M_k t^{2k}\right)\cdot\left(\sum_{k=0}^{|S|-2}N_k t^{2k}\right)\cdot t\cdot dt$$
then it remains to show $K'_k=K_k$. We have
\begin{align*}
&\hspace{-0.05cm}K'_k 
=\frac{1}{2(k+1)}\hspace{-0.4cm}\sum_{j=0}^{\min\{k,|S|-2\}}N_j\cdot M_{k-j}\\
&\hspace{-0.05cm}=\frac{(-1)^k(2\pi^2)^{d-1}}{2(k+1)(2\pi)^{2k}}\sum_{j=0}^{\min\{k,|S|-2\}}\hspace{-0.4cm}\frac{1}{(|S|-2)!}\binom{|S|-2}{j}\kappa_{\beta}^{|S|-j-2}\psi_\beta^j  \\
 &\hspace{5.0cm}\cdot\frac{1}{(d-|S|+1)!}\binom{d-|S|+1}{k-j}\kappa_{\alpha}^{d-k+j-|S|+1}\psi_\alpha^{k-j} \\
 &\hspace{-0.05cm}=\frac{(-1)^k(2\pi^2)^{d-1}}{2(k+1)(2\pi)^{2k}}\sum_{j=1}^{\min\{k+1,|S|-1\}}\hspace{-0.4cm}
{\frac{1}{(k-j+1)!(|S|-j-1)!(j-1)!(d-|S|-k+j)!}}\\
&\hspace{7.4cm}\cdot\kappa_{\alpha}^{d-k+j-|S|}\psi_\alpha^{k-j+1} \kappa_{\beta}^{|S|-j-1}\psi_\beta^{j-1} \\
 &\hspace{-0.05cm}=\hspace{-0.03cm}\frac{(-1)^k(2\pi^2)^{d-1}}{2(2\pi)^{2k}d!}\hspace{-0.08cm}\binom{d}{k+1}\hspace{-0.2cm}\sum_{j=1}^{\min\{k+1,|S|-1\}}\hspace{-0.8cm}
{\frac{(d-k-1)!k!}{(k-j+1)!(|S|-j-1)!(j-1)!(d-|S|-k+j)!}}\\
&\hspace{7.6cm}\cdot\kappa_{\alpha}^{d-k+j-|S|}\psi_\alpha^{k-j+1} \kappa_{\beta}^{|S|-j-1}\psi_\beta^{j-1} \\
&\hspace{-0.05cm}=\frac{(-1)^k(2\pi^2)^{d-1}}{2(2\pi)^{2k}d!}\binom{d}{k+1}\hspace{-0.1cm}\sum_{j=1}^{\min\{k+1,|S|-1\}}\hspace{-0.1cm}\binom{d-k-1}{d-|S|-k+j}\binom{k}{j-1} \\ 
&\hspace{7.5cm}\cdot\kappa_{\alpha}^{d-k+j-|S|}\psi_\alpha^{k-j+1} \kappa_{\beta}^{|S|-j-1}\psi_\beta^{j-1} \\
&=K_k
\end{align*}
\normalsize
completing the proof.
\end{proof}

\begin{remark}
Theorem~\ref{g,2 volume} deals with the $S=\{1,2\}$ case, where $|S|=2$ and
\begin{equation*}
\wc_{\cc,S}(\theta_1,\theta_2)=\sum_{k=0}^{d-1}K_k\phi_{1,2}^{2(k+1)}=\sum_{k=0}^{d-|S|+1}\frac{M_k}{2(k+1)}\phi_{1,2}^{2(k+1)}.
\end{equation*}
Indeed, here for $|S|=2$ we have that
$$V_{0,\cc_1}(i\theta,i\theta_1,i\theta_2) =1$$
and
\begin{align*}
K_k &=\frac{(-1)^{k}(2\pi^2)^d}{(2\pi)^{2(k+1)}d!}\binom{d}{k+1}\binom{d-k-1}{d-k-1}\binom{k}{0}\kappa_\alpha^{d-k-1}\psi_\alpha^{k} \kappa_\beta^{0}\psi_\beta^{0} \\
&=\frac{(-1)^{k}(2\pi^2)^d}{(2\pi)^{2(k+1)}d!}\binom{d}{k+1}\kappa_\alpha^{d-k-1}\psi_\alpha^{k}    \\
&=\frac{1}{2(k+1)}\left((-1)^k\frac{(2\pi^2)^{d-1}}{(2\pi)^{2k}(d-1)!}\binom{d-1}{k}\kappa_{\alpha}^{d-k-1}\psi_\alpha^k   \right)\\
&=\frac{M_k}{2(k+1)}.
\end{align*}
\end{remark}
 Mirzakhani's volume recursion formula \cite{MirSim}, allows one to compute volume polynomials corresponding to the main chamber $\cc^M$ of any space of stability conditions $\cD_{g,n}$. Along with formulas for wall-crossing given by Theorem~\ref{wall-crossing formula}, one could compute volumes for $\cM^{\text{hyp}}_{g,n}(i\b{\theta})$ for any set of admissible cone points $\b{\theta}\in [0,2\pi)^n$ such that $2g-2+n-\sum_{j\in\b{n}}\frac{\theta_j}{2\pi}>0$. Although, in practice this becomes difficult for large $n$ cases when dealing with chambers that require many wall-crossings to reach the main chamber. We will see in Section~\ref{sec:ex} how to combine wall-crossing with other techniques to obtain volume polynomials in some of these cases. 

As well as a tool for explicit computation, Theorem~\ref{wall-crossing formula} tells us some general information about volume polynomials. Notice from \eqref{WCF} that a given wall-crossing polynomial $\wc_{\cc,S}$ depends only on the quotient chamber $\cc/S$. 
\begin{corollary}\label{cor:quotchambercrossing}
     Given two chambers $\cc_1$ and $\cc_2$ which are both incident to and above a wall $W_S$, their respective wall-crossing polynomials $\wc_{\cc_1,S}$ and $\wc_{\cc_2,S}$ are equal if $\cc_1/S=\cc_2/S$.
\end{corollary}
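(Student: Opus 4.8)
The plan is to read the result straight off the wall-crossing formula \eqref{WCF} of Theorem~\ref{wall-crossing formula}, by isolating precisely which factors of the integrand depend on the ambient chamber and which depend only on the wall $W_S$. To avoid a notational clash with the symbol $\cc_1$ that \eqref{WCF} reserves for the genus~$0$ light chamber of the bubble, I would write $\cA$ and $\cB$ for the two chambers incident to and above $W_S$ and apply \eqref{WCF} to each:
\[
\wc_{\cA,S}(\boldsymbol{\theta})=\int_0^{\phi_S}V_{g,\cA/S}(i\boldsymbol{\theta}_{S^c},i\theta)\cdot V_{0,\cc_1}(i\theta,i\boldsymbol{\theta}_S)\cdot\theta\,d\theta,
\]
and similarly for $\cB$.

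First I would observe that the upper limit $\phi_S=\sum_{j\in S}\theta_j-2\pi(|S|-1)$ depends only on $S$ and on the entries $\boldsymbol{\theta}_S$, with no reference to the ambient chamber. Next, the factor $V_{0,\cc_1}(i\theta,i\boldsymbol{\theta}_S)$ is the volume polynomial of the genus~$0$ light chamber $\cc_1\subset\cd_{0,|S|+1}$ attached to the distinguished node entry $i\theta$; by \eqref{ming=0} this chamber is pinned down by the combinatorial data $(0,|S|+1)$ together with the choice of distinguished point, so it is identical in both formulas and carries no dependence on $\cA$ or $\cB$. The only remaining factor is $V_{g,\cA/S}(i\boldsymbol{\theta}_{S^c},i\theta)$, which by its very construction depends on the ambient chamber solely through the quotient chamber $\cA/S$. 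Hence, if $\cA/S=\cB/S$ then $V_{g,\cA/S}=V_{g,\cB/S}$ as elements of $\br[\boldsymbol{\theta}_{S^c},\theta]$; all three factors in the two integrands then coincide as polynomials in $\theta$, and integrating over $[0,\phi_S]$ yields $\wc_{\cA,S}=\wc_{\cB,S}$, which is the claim.

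The main point requiring care—indeed essentially the only place an error could creep in—is the second step: the assertion that the rational bubble always carries the \emph{same} light chamber $\cc_1$ no matter which chamber above $W_S$ one crosses. This holds because the bubbling-off degeneration underlying \eqref{WCF} places exactly the points of $S$, together with a single node, on a genus~$0$ rational tail, and the induced stability structure there is the minimal one described by \eqref{ming=0}; since that structure refers only to $S$ and the node entry and never to the weights of the points in $S^c$, it is common to both crossings. Everything else is the elementary observation that equal integrands over equal intervals produce equal wall-crossing polynomials, so I expect no substantive obstacle beyond this bookkeeping.
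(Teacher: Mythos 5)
Your proposal is correct and is essentially the paper's own argument: the corollary is presented there as an immediate consequence of Theorem~\ref{wall-crossing formula}, via the observation that in \eqref{WCF} the limit $\phi_S$ and the factor $V_{0,\cc_1}(i\theta,i\boldsymbol{\theta}_S)$ (the minimal chamber of $\cd_{0,|S|+1}$ attached to the node entry, fixed by $S$ alone) carry no dependence on the ambient chamber, so the only chamber-dependence enters through $V_{g,\cc/S}$. Your careful justification of the light-chamber factor and the renaming to avoid the $\cc_1$ notation clash are sound bookkeeping, not a different method; the paper additionally remarks that one could instead argue directly with intersection theory using $\delta_{0:S}\cdot\delta_{0:T}=0$ for $T\cap S\neq\emptyset$, but neither you nor the paper needs that route.
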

This dependence is stronger than simply specifying a wall. Geometrically, crossing a wall $W_S$ is only necessarily independent of the chambers above $S$, if those chambers are separated by a series of walls $W_{T_i}$ such that $T_i\cap S\neq \emptyset$ for all $i$. This result can also be proven directly using intersection theory and the fact that $\delta_{0:S}\cdot\delta_{0:T_i}=0$ if $T_i\cap S\neq \emptyset$. 
\begin{proposition}\label{prop:chambquotequiv}
    Given two chambers $\cc_1$ and $\cc_2$ separated by a wall $W_T$, the condition $T\cap S\neq\emptyset$ is equivalent to $\cc_1/S=\cc_2/S$.
\end{proposition}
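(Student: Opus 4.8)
The plan is to work entirely with the combinatorial encoding of chambers as order-preserving functions $\cc\colon\cP(\mathbf{n})\to\{0,1\}$. Since $\cc_1$ and $\cc_2$ are separated by the single wall $W_T$, they agree as functions on all of $\cP(\mathbf{n})$ except at $T$, where one takes the value $1$ and the other $0$. Thus the two quotient functions $\cc_1/S$ and $\cc_2/S$ can disagree only at arguments $\bar J\in\cP(\mathbf{n}/S)$ whose value under \eqref{quotient} actually depends on the value of $\cc$ at a set distinguishing $\cc_1$ from $\cc_2$, namely $T$. The strategy is therefore to isolate exactly which arguments $\bar J$ have $\cc$-dependent values, and then check whether $\bar T$ is among them.

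First I would record the key structural feature of \eqref{quotient}: of its three cases, only the middle one, $J\cap S=\emptyset$, returns the value $\cc(J)$; the cases $J\subset S$ and ``otherwise'' return the fixed constants $0$ and $1$ respectively, independent of $\cc$. Writing $\mathbf{n}/S=(\mathbf{n}\setminus S)\cup\{\ast\}$ with $\ast$ the collapsed point, this says precisely that $(\cc/S)(\bar J)$ depends on $\cc$ if and only if $\ast\notin\bar J$, in which case $\bar J$ is identified with a subset $J\subset\mathbf{n}\setminus S$ and $(\cc/S)(\bar J)=\cc(J)$. Consequently $\cc_1/S$ and $\cc_2/S$ can differ only at those $\bar J$ not containing $\ast$, and there they differ exactly when $\cc_1(J)\neq\cc_2(J)$, i.e.\ when $J=T$.

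This reduces the proposition to a two-case dichotomy. If $T\cap S=\emptyset$, then $T\subset\mathbf{n}\setminus S$, its image $\bar T=T$ omits $\ast$, and by the previous step $(\cc_1/S)(\bar T)=\cc_1(T)\neq\cc_2(T)=(\cc_2/S)(\bar T)$, so $\cc_1/S\neq\cc_2/S$. If instead $T\cap S\neq\emptyset$, then $\bar T$ contains $\ast$, so $\bar T$ falls into one of the two $\cc$-independent cases of \eqref{quotient} and $(\cc_1/S)(\bar T)=(\cc_2/S)(\bar T)$; since $T$ is the only set on which $\cc_1$ and $\cc_2$ disagree, every other argument already gives equal values, whence $\cc_1/S=\cc_2/S$. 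Combining the two directions yields the claimed equivalence.

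The main point requiring care—rather than a deep obstacle—is the bookkeeping around the collapsing map $\mathbf{n}\to\mathbf{n}/S$: one must confirm that the constant-valued cases of \eqref{quotient} are genuinely insensitive to the choice of preimage $J$ of a given $\bar J$, so that the ``$\cc$-dependent iff $\ast\notin\bar J$'' dichotomy is unambiguous, and that no subset disjoint from $S$ other than $T$ itself can contribute a discrepancy. Both follow directly from the definition, so the argument is essentially a careful unwinding of \eqref{quotient}.
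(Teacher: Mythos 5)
Your proof is correct and takes essentially the same route as the paper's: both unwind the definition \eqref{quotient}, use that $\cc_1$ and $\cc_2$ differ as functions only at $T$, and observe that $(\cc/S)(\bar J)$ depends on the chamber exactly in the case $J\cap S=\emptyset$, so the quotients differ precisely when $T$ is disjoint from $S$. Your bookkeeping via the collapsed point of $\mathbf{n}/S$ is just a repackaging of the paper's explicit case-by-case rewriting of $\cc_2/S$.
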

\begin{proof}
    Suppose that $\cc_1$ and $\cc_2$ are above and below $W_T$, respectively. The chambers are related by $$\cc_2(J)=\begin{cases}\cc_1(J)&\text{ if $J=T$}\\
0&\text{ if $J\neq T$}\end{cases}.$$ 
So 
\begin{align*}
\cc_2/S(\bar{J})&=\begin{cases}
0&\text{ if $J\subset S$} \\ 
\cc_2 (J)&\text{ if $J\cap S=\emptyset$}\\
1&\text{ otherwise}\end{cases} \\
&=\begin{cases}
0&\text{ if ($J\subset S$) or ($J\cap S=\emptyset$ and $J=T$)} \\
\cc_1(J)&\text{ if $J\cap S=\emptyset$ and $J\neq T$}\\
1&\text{ otherwise}\end{cases}.
\end{align*}
If $T\cap S\neq \emptyset$ then $J\cap S=\emptyset\Rightarrow J\neq T$ so $$\cc_2/S(\bar{J})=\left.\begin{cases}
0&\text{ if $J\subset S$ } \\
\cc_1(J)&\text{ if $J\cap S=\emptyset$ }\\
1&\text{ otherwise}\end{cases}\right\}=\cc_1/S(\bar{J}).$$
Conversely, if $\cc_1/S=\cc_2/S$ then for all $\bar{J}$ it follows that $J\cap S=\emptyset\Rightarrow J\neq T$. So $T\cap S\neq \emptyset$. 
\end{proof}

The collection of polynomials $V_{g,\cc}(i\boldsymbol{\theta})$ forms a piecewise polynomial on $\cd_{g,n}$, which determines the volume $\mathrm{Vol}\big(\cM^{\text{hyp}}_{g,n}(i\b{\theta})\big)$ as cone angles vary.  Moreover, the volumes vary continuously and differentiably across walls. 
\begin{lemma}
The volume of $\cM^{\text{hyp}}_{g,n}(i\b{\theta})$ is everywhere continuous and differentiable in angles $\theta_j$. 
\end{lemma}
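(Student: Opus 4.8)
The plan is to combine the piecewise-polynomial description of the volume with the explicit shape of the wall-crossing polynomials furnished by Theorem~\ref{wall-crossing formula}. On each chamber $\cc\subset\cd_{g,n}$ the volume equals the polynomial $V_{g,\cc}(i\b{\theta})$, which is smooth throughout the interior of $\cc$; continuity and differentiability can only fail along the walls $W_S$. The whole question therefore reduces to understanding how the polynomial pieces fit together across a single wall and along the loci where several walls meet.

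The key step is to show that the wall-crossing polynomial across any wall $W_S$ vanishes to second order on $W_S$. The defining equation of $W_S$ is $\phi_S=0$, where $\phi_S=\sum_{j\in S}\theta_j-2\pi(|S|-1)$, and the proof of Theorem~\ref{wall-crossing formula} expresses
\[
\wc_{\cc,S}(\b{\theta})=\sum_{k=0}^{d-1}K_k\,\phi_S^{2(k+1)},\qquad d=3g-3+n,
\]
a polynomial in the single linear form $\phi_S$ with only even exponents, all at least $2$. Equivalently, \eqref{WCF} writes $\wc_{\cc,S}$ as $\int_0^{\phi_S}(\cdots)\,\theta\,d\theta$ with an integrand having a simple zero at $\theta=0$, so the integral is $O(\phi_S^2)$. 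Consequently both $\wc_{\cc,S}$ and its gradient $\nabla\wc_{\cc,S}=\sum_{k}2(k+1)K_k\,\phi_S^{2k+1}\,\nabla\phi_S$ vanish identically on $W_S$. Writing $\cc'$ for the chamber below $W_S$, the relation $V_{g,\cc'}=V_{g,\cc}+\wc_{\cc,S}$ then gives $V_{g,\cc'}=V_{g,\cc}$ and $\nabla V_{g,\cc'}=\nabla V_{g,\cc}$ on $W_S$. Hence the volume and its gradient agree across every wall, which already proves continuity everywhere and continuity of the gradient away from the codimension-two locus where distinct walls intersect.

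The remaining and main obstacle is to promote this to continuity of the gradient at a point $\b{\theta}_0$ lying on several walls. Near such a point only the walls $W_S$ through $\b{\theta}_0$ are relevant, and these affine hyperplanes, all containing $\b{\theta}_0$, cut out a central arrangement whose chambers are joined in pairs by galleries---sequences of chambers each adjacent to the next across a single such wall. Crossing one wall $W_S$ of the gallery changes the local volume polynomial by $\pm\wc_{\cc,S}$, whose gradient vanishes at $\b{\theta}_0$ since $\phi_S(\b{\theta}_0)=0$; traversing a gallery connecting any two chambers incident to $\b{\theta}_0$ therefore shows that all the polynomial pieces meeting at $\b{\theta}_0$ share a common value and a common gradient there. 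Thus the piecewise-defined gradient extends to a single continuous function, and $\mathrm{Vol}\big(\cM^{\text{hyp}}_{g,n}(i\b{\theta})\big)$ is everywhere $C^1$, in particular continuous and differentiable. The crux of the argument is precisely the second-order vanishing of $\wc_{\cc,S}$ on $W_S$, which is what prevents a wall-crossing from introducing any first-order jump.
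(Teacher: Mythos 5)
Your proof is correct, and its engine is the same as the paper's: the paper likewise reduces everything to showing that $\wc_{\cc,S}$ and its first derivatives vanish on $W_S$, which it proves by differentiating the integral formula \eqref{WCF} directly (for $j\in S$ the Leibniz boundary term carries a factor $\phi_S$, and for all $j$ the remaining integral term vanishes when the upper limit $\phi_S$ is $0$) --- exactly your $O(\phi_S^2)$ observation. Two remarks are worth making. First, a small imprecision: in the expansion $\wc_{\cc,S}=\sum_k K_k\,\phi_S^{2(k+1)}$ from the proof of Theorem~\ref{wall-crossing formula}, the coefficients $K_k$ are intersection numbers built from $\kappa_\alpha$ and $\kappa_\beta$, which contain the terms $(1-a_j)^2\psi_j$ and hence depend polynomially on the remaining angles; so $\wc_{\cc,S}$ is not a polynomial in the single linear form $\phi_S$, and your displayed formula for $\nabla\wc_{\cc,S}$ omits the $\nabla K_k$ contributions. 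This is harmless: every term of the true gradient still carries a factor $\phi_S^{2k+1}$ or $\phi_S^{2(k+1)}$, so divisibility by $\phi_S^{2}$ already forces the value and gradient to vanish on $\{\phi_S=0\}$, and your integral-representation argument is airtight in any case. Second, your gallery argument at points lying on several walls is a genuine refinement of the paper's proof: the paper verifies matching of value and gradient only across a single wall and leaves the codimension-two locus implicit, whereas you show that all polynomial pieces meeting at such a point share a common first-order jet --- which, for a piecewise polynomial with finitely many pieces, is precisely what is needed to conclude differentiability (indeed $C^1$) there. So the core lemma is identical to the paper's; what your route buys is a complete treatment of the wall intersections that the paper's two-case derivative computation does not spell out.
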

\begin{proof}
    Away from walls this result is trivial. What's left to show is that wall-crossing polynomials and their derivatives vanish when evaluated at the corresponding wall. Consider a wall-crossing polynomial $\wc_{\cc,S}$. At the wall $W_S$, we have $\sum_{j\in S}a_j=1$ which is equivalent to $\phi_S=0$, hence $\wc_{\cc,S}(\b{\theta})\mid_{W_S}=0$ and the volume is continuous across walls. 
    
To understand derivatives of the volume at the wall $W_S$, we consider two cases.  If $j\in S$, then the derivative with respect to $\theta_j$ is 
    \begin{align*}
        \frac{\partial}{\partial\theta_j}\wc_{\cc,S}(\b{\theta})=V_{g,\cc/S}(i\boldsymbol{\theta}_{S^c},&i\phi_S)\cdot V_{0,\cc_1}(i\phi_S,i\boldsymbol{\theta}_{S})  \cdot \phi_S \\
        & + \int_{0}^{\phi_S} V_{g,\cc/S}(i\boldsymbol{\theta}_{S^c},i\theta)\cdot  \frac{\partial}{\partial\theta_j} \left( V_{0,\cc_1}(i\theta,i\boldsymbol{\theta}_{S}) \right) \cdot \theta \cdot d\theta.
    \end{align*}
    So at the wall $W_S$, where $\phi_S=0$, it follows that $\frac{\partial}{\partial\theta_j}\wc_{\cc,S}(\b{\theta})\mid_{W_S}=0$. If $j\notin S$, then \begin{equation*}
        \frac{\partial}{\partial\theta_j}\wc_{\cc,S}(\b{\theta})=\int_{0}^{\phi_S}\frac{\partial}{\partial\theta_j} \left( V_{g,\cc/S}(i\boldsymbol{\theta}_{S^c},i\theta)\right) \cdot  V_{0,\cc_1}(i\theta,i\boldsymbol{\theta}_{S})  \cdot \theta \cdot d\theta,
    \end{equation*}
which again vanishes when $\phi_S=0$.
\end{proof}

\subsection{The dilaton equation and limits of volume polynomials}



In this section we prove Theorem~\ref{Thm:Dilaton}, which concerns the $2\pi i$ limit of a derivative of a volume polynomial. 
Recall that we defined a chamber $\cc\subset\cd_{g,n}$ as being \emph{light in the $i$th coordinate} if for every set $S\subset \{1,\dots, n\}$ (including singletons) such that $i\not\in S$ and $\sum_{j\in S} a_j\leq1$, it also holds that $a_i+\sum_{j\in S} a_j\leq1$. Or equivalently, the contraction set is empty for the forgetful morphism forgetting the $i$th point.

\begin{lemma}
 \label{Thm:2pilimits}
Let $\cc\subset\cd_{g,n+1}$ be light in the $(n+1)$th coordinate. Then
\begin{equation}
V_{g,\cc}(i\theta_1,\dots,i\theta_{n},2\pi i)=0.
\end{equation}
\end{lemma}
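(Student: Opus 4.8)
The key observation is that the angle $\theta_{n+1}=2\pi$ corresponds to the weight $a_{n+1}=a(2\pi)=0$, so the statement amounts to showing that the volume polynomial, viewed via Definition~\ref{def:vol} as a polynomial in $\b{a}$, vanishes on the hyperplane $a_{n+1}=0$. My plan is to exploit the forgetful morphism $\varphi\colon\Mbar{g}{\cc}\to\Mbar{g}{\cc|_{\b{n}}}$ dropping the $(n+1)$th marked point. Because $\cc$ is light in the $(n+1)$th coordinate, its contraction set $\mathcal{T}$ is empty (Definition~\ref{defn:light}), so $\varphi$ performs no contractions and realises $\Mbar{g}{\cc}$ as the universal curve over $\Mbar{g}{\cc|_{\b{n}}}$; in particular $\varphi$ has one-dimensional fibres and $\dim\Mbar{g}{\cc}=\dim\Mbar{g}{\cc|_{\b{n}}}+1$.

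The heart of the argument is to show that $\gamma(\b{a})$ becomes a pullback along $\varphi$ once $a_{n+1}=0$. Applying Lemma~\ref{lem:pullbackgamma} to $\varphi$ with empty contraction set gives the polynomial identity
\[\varphi^*\gamma(\b{a}')=\gamma(\b{a})-\sum_{\cc(i,n+1)=0}2a_ia_{n+1}D_{i,n+1}+a_{n+1}(a_{n+1}-2)\psi_{n+1}\]
in $H^2(\Mbar{g}{\cc},\bq)[\b{a}]$, where $\b{a}'=(a_1,\dots,a_n)$. Every correction term carries a factor of $a_{n+1}$, so substituting $a_{n+1}=0$ collapses the identity to $\gamma(\b{a})\big|_{a_{n+1}=0}=\varphi^*\gamma(\b{a}')$. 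This is precisely the step where lightness is essential: it removes the $\sum_{S\in\mathcal{T}}(1-\sum_{j\in S-\{n+1\}}a_j)^2\delta_{0:S}$ term of Lemma~\ref{lem:pullbackgamma}, whose summands do \emph{not} all carry a factor of $a_{n+1}$ and would therefore survive the substitution and spoil the conclusion. I expect the only genuine care needed in the whole argument to lie in this bookkeeping, namely confirming that lightness exactly guarantees the absence of surviving correction terms at $a_{n+1}=0$.

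Granting this, I would finish by a dimension count. Writing $d=\dim\Mbar{g}{\cc}=3g-2+n$, Definition~\ref{def:vol} yields
\[V_{g,\cc}(i\theta_1,\dots,i\theta_n,2\pi i)=\frac{(2\pi^2)^d}{d!}\int_{\Mbar{g}{\cc}}\big(\varphi^*\gamma(\b{a}')\big)^d=\frac{(2\pi^2)^d}{d!}\int_{\Mbar{g}{\cc}}\varphi^*\big(\gamma(\b{a}')^d\big).\]
But $\gamma(\b{a}')^d$ has cohomological degree $2d$ on $\Mbar{g}{\cc|_{\b{n}}}$, a space of dimension $d-1<d$, so $\gamma(\b{a}')^d=0$ and the integral vanishes. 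Equivalently, the projection formula gives $\int_{\Mbar{g}{\cc}}\varphi^*(\gamma(\b{a}')^d)=\int_{\Mbar{g}{\cc|_{\b{n}}}}\gamma(\b{a}')^d\cdot\varphi_*1$, which is zero since $\varphi_*1=0$ because $\varphi$ has positive fibre dimension. Geometrically this reflects the vanishing of the fibre area in the $2\pi$ cone-angle limit, the degeneration of the Weil--Petersson form alluded to after \eqref{eqn:incidentzero}.
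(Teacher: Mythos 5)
Your proposal is correct and follows essentially the same route as the paper's proof: both use that lightness makes the contraction set of the forgetful morphism $\varphi$ empty, apply Lemma~\ref{lem:pullbackgamma} to see that every correction term carries a factor of $a_{n+1}$ so that $\gamma(\b{a})\big|_{a_{n+1}=0}=\varphi^*\gamma(\b{a}')$, and then conclude that the top power $\big(\varphi^*\gamma(\b{a}')\big)^d=\varphi^*\big(\gamma(\b{a}')^d\big)$ vanishes because $\gamma(\b{a}')^d$ lives on the $(d-1)$-dimensional base. Your projection-formula aside and the explicit dimension count merely spell out what the paper compresses into the line $(\varphi^*\gamma_{\cc'}(\b{a}))^d=0$.
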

\begin{proof} We denote by 
$$\varphi:\Mbar{g}{\cc}\longrightarrow \Mbar{g}{\cc'}$$ 
the forgetful morphism which is flat by assumption, but further, as $\cc$ is light in the $(n+1)$th coordinate Lemma~\ref{lem:pullbackgamma} gives
$$ \gamma_\cc(\bold{a})=\varphi^*\gamma_{\cc'}(\bold{a})+a_{n+1}(2-a_{n+1})\psi_{n+1}+2\sum_{j=1}^na_ja_{n+1}D_{j,n+1},$$
where $\gamma_{\cc}$ and $\gamma_{\cc'}$ are defined in Definition~\ref{def:gamma}, and depend on their respective chambers. 
Letting $d=3g-3+(n+1)$ we obtain by the flatness of $\varphi$
$$\lim_{a_{n+1}\to 0}\gamma_{\cc}(\bold{a})^d=(\varphi^*\gamma_{\cc'}(\bold{a}))^d=0.$$
Hence \eqref{eqn:incidentzero} holds.

Alternatively, the K\"ahler metric is proven in \cite[Thm 3.5]{STrVar} to degenerate in the $2\pi$ limit of a cone angle to the pullback of the metric on the moduli space with the point forgotten.  If uniform convergence to the pullback of the metric can be proven then a consequence would be that the limit
 \begin{equation}  \label{limit}
 \lim_{\theta_{n+1}\to 2\pi i}V_{g,n}(i\theta_1,i\theta_2,...,i\theta_{n+1})=0
\end{equation}
This limit, together with the polynomial dependence of the volume, would give a geometric proof of \eqref{eqn:incidentzero}.  
\end{proof}

Recall that we defined a chamber $\cc\subset\cd_{g,n}$ as being \emph{flat in the $i$th coordinate} if the contraction set contains only sets of order at most $2$ for the morphism forgetting the $i$th point. for the forgetful morphism forgetting the $i$th point. This is a weaker condition that the chamber being light in the $i$th coordinate.

\thmfour*
We can equivalently write $q(\cc)$ as follows:
\begin{eqnarray*}
q(\cc)=\{j\mid \cc(\{j,n+1\})=1\}.
\end{eqnarray*}
\begin{proof}
Denote by 
$$\varphi:\Mbar{g}{\cc}\longrightarrow \Mbar{g}{\cc'}$$ 
the forgetful morphism which is flat by assumption. Denote by $\gamma_{\cc}(\b{a})$ and $\gamma_{\cc'}(\b{a})$ the cohomology classes defined in Definition~\ref{def:gamma}, which depend on their respective chambers. Throughout this proof we write products of cohomology classes as shorthand notation for top intersections.  

Observe that we can obtain
$$\frac{\partial(\gamma_{\cc}(\bold{a})^d)}{\partial a_{n+1}}\bigm|_{a_{n+1}=0}$$
as the coefficient of $a_{n+1}$ in the polynomial expansion of 
$$(\varphi^*\gamma_{\cc'}(\bold{a})+a_{n+1}(2-a_{n+1})\psi_{n+1}+2\sum_{j\notin q(\cc)} a_ja_{n+1}D_{j,n+1} -\sum_{j\in q(\cc)}(1-a_j)^2\delta_{0:\{j,n+1\}})^d   $$
which is
$$d\cdot (\varphi^*\gamma_{\cc'}(\bold{a}) -\sum_{j\in q(\cc)}(1-a_j)^2\delta_{0:\{j,n+1\}})^{d-1}(2\psi_{n+1}+2\sum_{j\notin q(\cc)}a_jD_{j,n+1} )    $$
But as $\psi_{n+1}\cdot \delta_{0:\{j,n+1\}}=D_{k,n+1}\cdot \delta_{0:\{j,n+1\}}=0$ for $k\notin q(\cc)$ and $j\in  q(\cc)$ this becomes
$$d\cdot (\varphi^*\gamma_{\cc'}(\bold{a}) )^{d-1}(2\psi_{n+1}+2\sum_{j\notin q(\cc)}a_jD_{j,n+1} )    $$
and as $\varphi$ is flat we have
$$(\varphi^*\gamma_{\cc'}(\bold{a}))^{d-1}=\varphi^*(\gamma_{\cc'}(\bold{a})^{d-1})=\deg[\gamma_{\cc'}(\bold{a})^{d-1}] [B]$$
where $[B]$ is the curve with class equal to the fibre of $\varphi$ and hence by the intersection numbers
$$B\cdot\psi_{n+1}=2g-2+|q(\cc)|\text{  and  } B\cdot D_{i,n+1}=1.    $$
Now
\begin{eqnarray*}
\frac{\partial(\gamma_{\cc}(\bold{a})^d)}{\partial a_{n+1}}\bigm|_{a_{n+1}=0}&=&2d(2g-2+|q(\cc)|+\sum_{j\notin q(\cc)}a_j)\cdot\deg[\gamma_{\cc'}(\bold{a})^{d-1}]\\
&=&4\pi^2(2g-2+|q(\cc)|+\sum_{j\notin q(\cc)} a(\theta_j))V_{g,\cc'}(i\theta_1,\dots,i\theta_n).
\end{eqnarray*}
Then \eqref{eqn:derivative} follows by the chain rule as
$$\frac{da(\theta_{n+1})}{d\theta_{n+1}}=\frac{-1}{2\pi}.$$
\end{proof}
\begin{remark}
     In Corollary~\ref{cor:generalchamber} we use wall-crossing polynomials and results from Section~\ref{volP^n} to obtain a generalisation of \eqref{eqn:derivative} that holds for certain chambers that are not necessarily flat in the $(n+1)$th coordinate.
\end{remark}
Lemma~\ref{Thm:2pilimits} and Theorem~\ref{Thm:Dilaton}  serve as a generalisation of Theorem 2 from \cite{DNoWei} and Corollary 1.3 from \cite{ANoVol}. The proof of Theorem~\ref{Thm:Dilaton} also applies to cases where boundary geodesics are present provided the lengths of the added boundary component are sufficiently small (see \cite[Lemma 2.3]{ANoVol}).  Each boundary geodesic is assigned the weight of a cusp in order to determine the appropriate chamber. 


The two extremal cases of Theorem~\ref{Thm:Dilaton}, for $q(\cc)=\b{n}$ and $q(\cc)=\emptyset$, have special significance.   When $a_j+a_{n+1}>1$ for all $j\in \b{n}$, for example in the maximal chamber $\cc^M$, so that $q(\cc)=\b{n}$, then \eqref{eqn:derivative} becomes 
\begin{equation} \label{eqn:dilMirz}
    \frac{\partial V_{g,\cc}}{\partial\theta_{n+1}}(i\theta_1,\dots,i\theta_n,2\pi i)=-2\pi (2g-2+n)V_{g,\cc|_{\b{n}}}(i\theta_1,\dots,i\theta_n).
\end{equation} 
When applied to $\cc^M$, the volume polynomials are Mirzakhani's polynomials, and \eqref{eqn:dilMirz} is the dilaton equation proven in \cite{ANoVol}. 

When $\cc\subset\cD_{g,n+1}$ is light in the $(n+1)$th coordinate, for example when $a_{n+1}\approx 0$,  then $q(\cc)=\emptyset$, and \eqref{eqn:derivative} becomes 
\begin{equation} \label{eqn:dillight}
    \frac{\partial V_{g,\cc}}{\partial\theta_{n+1}}(i\theta_1,\dots,i\theta_n,2\pi i)=-2\pi (2g-2+\sum_{j=1}^n a_j)V_{g,\cc|_{\b{n}}}(i\theta_1,\dots,i\theta_n).
\end{equation}
The coefficient in \eqref{eqn:dillight} is given by the negative of the hyperbolic area 
\[A=2\pi (2g-2+\sum_{j=1}^n a_j).\]  
We expect that this coefficient should be a consequence of the degeneration of the K\"ahler metric in the $2\pi$ limit of a cone angle to the pullback of the metric from the moduli space with the point forgotten, proven in \cite[Thm 3.5]{STrVar}, once the rate of degeneration is proven.

In general, $2\pi i$ evaluation of a volume polynomial $V_{g,\cc}$ or its derivative does not necessarily correspond to volumes of moduli spaces $\cM^{\text{hyp}}_{g,n+1}(i\b{\theta},i\theta_{n+1})$ in the limit $\theta_{n+1}\to 2\pi$. This is because a path taken in such a limit likely crosses a number of walls within the space of stability conditions $\cD_{g,n+1}$. However, in the case where volume polynomials correspond to chambers where the $(n+1)$th coordinate is light, evaluation of a light point to $\theta_{n+1}=2\pi i$ does have geometric meaning, since a path in $\cD_{g,n+1}$ sending the light point cone angle to $2\pi$ remains in the same chamber. 

The following calculations demonstrate the difference between the $2\pi$ limit of the derivative of the piecewise polynomial $\mathrm{Vol}\big(\cM^{\text{hyp}}_{g,n}(i\theta_1,i\theta_2,i\theta_3,i\theta_4)\big)$ versus the same limit of the derivative of the polynomial in a flat chamber.   Consider the chamber $\cB_2\subset\cD_{0,4}$ from Section~\ref{ex:M04}, which is flat in the $4$th coordinate. This chamber is incident to the wall $W_{\{1,4\}}$ and above the chamber $\cb_3$ which is light in the $4$th coordinate.  As $\theta_4$ approaches $2\pi$ we are forced to cross the wall $W_{\{1,4\}}$ into $\cb_3$. So the piecewise polynomial becomes $\mathrm{Vol}\big(\cM^{\text{hyp}}_{g,n}(i\boldsymbol{\theta})\big)=V_{0,\cb_3}(i\b{\theta})$, and:
\[
     \left.\frac{\partial}{\partial \theta_{4}}\right|_{\theta_4=2\pi} \hspace{-5mm}\mathrm{Vol}\big(\cM^{\text{hyp}}_{g,n}(i\theta_1,i\theta_2,i\theta_3,i\theta_4)\big)=-2\pi (\theta_1+\theta_2+\theta_3-2),
\]
whereas
\[
    \left.\frac{\partial}{\partial \theta_{4}}\right|_{\theta_4=2\pi} \hspace{-5mm}V_{0,\cb_2}(i\theta_1,i\theta_2,i\theta_3,i\theta_4)=-2\pi (\theta_2+\theta_3-1).
\]
The first calculation is proven in \cite[(1.3b)]{ETu2Dd}.   The second calculation requires 
Theorem~\ref{Thm:Dilaton}.

\section{Volume and wall-crossing calculations}  \label{sec:ex}
In this section, we calculate the polynomials $V_{0,\cc_1}(i\b{\theta})$ for the minimal chamber $\cc_1$ which are used in the wall-crossing polynomial in Theorem~\ref{wall-crossing formula} which leads to a more explicit formula \eqref{wckernel}.  We also consider other genus 0 examples, in particular, the three genus $0$ examples discussed in \cite{HasMod}, where the moduli spaces $\overline{\cM}_{0,\cA}$ are isomorphic to Losev-Manin space \cite{LMaNew}, $(\bbC P^1)^{n}$ and $\bbC P^{n}$. We also separately look at some small $n$ examples. 
Theorems~\ref{wall-crossing formula} and~\ref{Thm:Dilaton} and Lemma~\ref{Thm:2pilimits} are crucial for the computations in this section.



\subsection{Minimal chamber} \label{volP^n}

The minimal chamber $\cc_1$ defined in \eqref{ming=0} is given by
 \begin{equation*}
    \cc_1:\cP(\b{n})\rightarrow \{0,1\},\qquad \cc_1(J)=0\quad\Leftrightarrow\quad 1\not\in J\text{ and } J^c\neq \{1\}.
\end{equation*}
This is the unique chamber of $\cD_{0,n}$ containing the element $\b{b}_n=(1,b^{n-1})$, for $\frac{1}{n-1}<b\leq\frac{1}{n-2}$. The moduli space corresponding to this chamber is isomorphic to $\bbC P^{n-3}$ \cite[\textsection 6.2]{HasMod}, proven via a series of blow-downs.
\begin{proposition}\label{lem:lightchamber}
 In the chamber $\cc_1\subset\cD_{0,n}$, \begin{equation}
    V_{0,\cc_1}(i\b{\theta})=\frac{(2\pi^2)^{n-3}}{(n-3)!} \left(-2+\sum_{j=1}^{n} a_j\right)^{n-3}\left(-a_1+\sum_{j=2}^{n} a_j\right)^{n-3}, \label{Vcc0}
\end{equation}
where $a_j=a(\theta_j)$.
\end{proposition}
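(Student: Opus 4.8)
The plan is to exploit the isomorphism $\Mbar{0}{\cc_1}\cong\bbP^{n-3}$ recorded above. Under it $H^2(\Mbar{0}{\cc_1};\bbQ)$ is one-dimensional, spanned by the hyperplane class $H$ with $\int_{\bbP^{n-3}}H^{n-3}=1$, so $\gamma(\b a)$ is automatically a scalar multiple $\gamma(\b a)=\mu(\b a)H$. Then the defining integral \eqref{volalg} collapses to
\[
V_{0,\cc_1}(i\b\theta)=\frac{(2\pi^2)^{n-3}}{(n-3)!}\int_{\bbP^{n-3}}\big(\mu(\b a)H\big)^{n-3}=\frac{(2\pi^2)^{n-3}}{(n-3)!}\,\mu(\b a)^{n-3},
\]
and everything reduces to identifying the coefficient $\mu(\b a)$ and showing it factors as $\mu(\b a)=\big(-2+\sum_{j=1}^n a_j\big)\big(-a_1+\sum_{j=2}^n a_j\big)$.

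To pin down $\mu$ I would express each generator in \eqref{gama} as a multiple of $H$ using the explicit model of $\Mbar{0}{\cc_1}$ as $\big(\{\sum_{j\geq2}p_j=0\}\setminus\{0\}\big)/\bbC^*$: fix the heavy point $p_1=\infty$, place the light points $p_2,\dots,p_n$ in $\bbA^1$ in centre-of-mass gauge, and quotient by scaling. From this the collision divisor $D_{i,j}$ (with $i,j\geq2$) is the hyperplane $\{p_i=p_j\}$, hence $D_{i,j}=H$; tracking the weight of the $\bbC^*$-action on the relative cotangent line gives $\psi_1=H$ at the heavy point and $\psi_j=-H$ for each light point $j\geq2$. (These also follow by comparing degrees in the pullback relation \eqref{redmor} for the reduction morphism $\Mbar{0}{n}\to\Mbar{0}{\cc_1}$.) A small but essential point to verify is that $\Mbar{0}{\cc_1}$ carries no generically nodal boundary divisors: any splitting would leave a genus-$0$ component with too few heavy markings to be stable, so $\delta=0$. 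Since $\lambda=0$ in genus $0$, Proposition~\ref{prop:kappaclass} then yields
\[
\kappa_1=\sum_{j=1}^n\psi_j+2\!\!\sum_{\substack{i<j\\ i,j\geq2}}\!\!D_{i,j}=\big((2-n)+(n-1)(n-2)\big)H=(n-2)^2H.
\]

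Substituting these classes into \eqref{gama} and collecting the coefficient of $H$ would give the quadratic
\[
\mu(\b a)=(n-2)^2-(1-a_1)^2+\sum_{j\geq2}(1-a_j)^2+\!\!\sum_{\substack{i<j\\ i,j\geq2}}\!\!(2a_ia_j-2).
\]
Writing $P=\sum_{j\geq2}a_j$ and expanding, the constant, linear and $\sum a_j^2$ contributions cancel, leaving $\mu(\b a)=P^2-2P-a_1^2+2a_1=(P-a_1)(P+a_1-2)$, which is precisely $\big(-a_1+\sum_{j\geq2}a_j\big)\big(-2+\sum_{j=1}^n a_j\big)$; feeding this back into the displayed formula for $V_{0,\cc_1}$ produces \eqref{Vcc0}. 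I expect the only genuinely delicate steps to be the sign bookkeeping for the $\psi$-classes (the light points contributing $-H$) together with the verification that no nodal boundary divisors occur, so that Proposition~\ref{prop:kappaclass} applies with $\delta=0$; the final factorisation of the quadratic is then routine.
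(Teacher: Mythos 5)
Your proposal is correct and is essentially the paper's second proof of Proposition~\ref{lem:lightchamber}: both exploit $H^2(\Mbar{0}{\cc_1};\bbQ)\cong\bbQ$ to write $\gamma(\b a)$ as a multiple of a single generator, establish the proportionalities $\psi_1=D_{i,j}=\tfrac{1}{(n-2)^2}\kappa_1=-\psi_k$ ($k\geq2$), and then factor the resulting quadratic $(1-\sum_{j\geq2}a_j)^2-(1-a_1)^2$. The only (cosmetic) difference is how you fix the constants --- via the explicit centre-of-mass model of $\bbP^{n-3}$ and Proposition~\ref{prop:kappaclass} with $\delta=0$, $\lambda=0$, where the paper intersects test curves $B_j$ with pullbacks along the reduction morphism using \eqref{redmor} and Lemma~\ref{lem:kappapullback1} --- and your parenthetical fallback via \eqref{redmor} is exactly the paper's route.
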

\begin{proof}[Proof 1]
We prove this volume formula using Theorems~\ref{wall-crossing formula} and~\ref{Thm:Dilaton}. The $\cc_1$ chamber has no flat coordinates, so Theorem~\ref{Thm:Dilaton} cannot be applied directly. We first use Theorem~\ref{wall-crossing formula} to express $ V_{0,\cc_1}(i\b{\theta})$ as a sum of wall-crossing polynomials and a volume polynomial corresponding to a chamber where the $n$th coordinate is flat.

Define the chamber $\cb$ to be above $\cc_1$ and incident to the wall $W_{S}$, for $S=\{2,\ldots,n-1\}$.
Theorem~\ref{wall-crossing formula} gives an expression for the wall-crossing polynomial:
\begin{align*}
    \wc_{\cb,S}(\boldsymbol{\theta})&=\int_{0}^{\phi_{S}} 1 \cdot V_{0,\cc_1}(i\theta,i\theta_2,\ldots,i\theta_{n-1})  \cdot \theta d\theta,
\end{align*}
where $\boldsymbol{\theta}=(\theta_1,\ldots,\theta_n)$ and $\phi_{S}=\sum_{j=2}^{n-1}\theta_j-2\pi (n-3)$ is independent of $\theta_n$. 

The $n$th point in $\cb$ is flat, so Theorem~\ref{Thm:Dilaton} applies to $V_{0,\cb}$ with $q(\cb)=\{1\}$. Differentiate with respect to $\theta_n$ and evaluate at $2\pi$ to produce, 
\begin{align}
    \left.\frac{\partial}{\partial \theta_{n}}\right|_{\theta_n=2\pi}  \hspace{-4mm}V_{0,\cc_1}(i\b{\theta})
    &=\left.\frac{\partial}{\partial \theta_{n}}\right|_{\theta_n=2\pi} \Big( V_{0,\cb}(i\b{\theta})+\int_{0}^{\phi_{S}} V_{0,\cc_1}(i\theta,i\theta_2,\ldots,i\theta_{n-1})  \cdot \theta d\theta\Big) \nonumber \\
    &\ =-2\pi (-1+\sum_{j=2}^{n-1}a_j)V_{0,\cb|_{\b{n-1}}}(i\theta_1,\ldots,i\theta_{n-1})+0, \label{dilcpn}
\end{align}
where $\cb|_{\b{n-1}}=\cc_1\subset\cD_{0,n-1}$ is a minimal chamber, also denoted $\cc_1$, obtained by forgetting the $n$th point. So \eqref{dilcpn} becomes
\begin{equation} \label{cc0rec}
    \left.\frac{\partial}{\partial \theta_{n}}\right|_{\theta_n=2\pi} \hspace{-4mm}V_{0,\cc_1}(i\b{\theta})=-2\pi \Big(-1+\sum_{j=2}^{n-1}a_j\Big)V_{0,\cc_1}(i\theta_1,\ldots,i\theta_{n-1}).
\end{equation}
It can be show that the formula in \eqref{Vcc0} satisfies this recursive equation. Give this formula the name $\cV_{n}$, so
\begin{equation*}
    \cV_{n}(\b{\theta})=\frac{(2\pi^2)^{n-3}}{(n-3)!} \left(-2+\sum_{j=1}^n a_j\right)^{n-2}\left(-a_1+\sum_{j=2}^n a_j\right)^{n-3}.
\end{equation*}
To prove uniqueness of $\cV_{n}$ in satisfying \eqref{cc0rec}, we follow an inductive argument on $n$. In the case $n=3$, we know from Section~\ref{ex:M04} that equation \eqref{Vcc0} holds. Assume equation \eqref{Vcc0} holds for $n<k$. Suppose if other than $\cV_{k}$ that there is another degree $2k-6$ polynomial $f:[0,1)^{k}\rightarrow\bbR$, symmetric in $\theta_j$ for $j\in\{2,\ldots,k\}$, satisfying \eqref{Vcc0}. Then 
\begin{equation}
     \left.\frac{\partial}{\partial \theta_{k}}\right|_{\theta_k=2\pi}\hspace{-4mm}(\cV_{k}-f)(\theta_1,\ldots,\theta_k)=0. \label{difference0}
\end{equation}
Equation \eqref{difference0} and symmetry mean that the difference must satisfy 
\begin{equation*}
    (\cV_{k}-f)(\theta_1,\ldots,\theta_k)=(2\pi-\theta_2)^2\cdots (2\pi-\theta_k)^2\cdot g(\theta_1,\ldots,\theta_k),
\end{equation*}
for some polynomial $g$. However, if $g$ were non-zero then the degree of $\cV_{k}-f$ would exceed $2k-6$. Hence, it must be that $g\equiv 0$ and $\cV_{k}\equiv f$. This proves uniqueness of the set of functions $\{\cV_{k}\}_{k\geq 3}$ that satisfies the recursion \eqref{cc0rec}. So $V_{0,\cc_1}(i\b{\theta})=\cV_{n}(\b{\theta})$.
\end{proof}
\begin{proof}[Proof 2]  This proof exploits the simplicity of $H^2(\cc_1,\bz)$.
Since $H^2(\bbC P^{n-3},\bbZ)\cong\bbZ$, all divisors on $\overline{\cM}_{0,\cc_1}$ are proportional and the intersection numbers of any non-trivial curve with two divisors determines this proportionality. Let
$$\pi:\Mbar{0}{n}\longrightarrow \Mbar{0}{\cc_1}$$
be the reduction morphism from the Deligne-Mumford compactification. Denote by $B_j$ the curve obtained in $\Mbar{0}{n}$ by fixing all points in $\b{n}\setminus\{j\}$ in general position on a rational curve and allowing the $j$th point to vary. The top intersection numbers with the $\psi$ and boundary classes of this curve are
$$B_j\cdot\psi_i=\begin{cases}n-3&\text{for $i=j$}\\
1&\text{otherwise,} \end{cases}\quad\text{   and   }\quad B_j\cdot \delta_{0:S}=\begin{cases} 1&\text{if $|S|=2$ and $j\in S$}\\1&\text{if $|S^c|=2$ and $j\in S^c$}\\0&\text{otherwise,} \end{cases}$$
and Proposition~\ref{prop:kappaclass} gives $B_j\cdot\kappa_1=n-3$. Hence via \eqref{redmor}, we obtain top intersections 
$$B_1\cdot\pi^*\psi_i=\begin{cases}n-3&\text{for $i=1$}\\
3-n&\text{otherwise,} \end{cases}\quad\text{   and   }\quad B_1\cdot \pi^*D_{i,j}=n-3.$$
Further, Lemma~\ref{lem:kappapullback1} gives
$$B_1\cdot(\pi^*\kappa_1)=B_1\cdot\left(\kappa_1+\sum_{S\in\textbf{red}(\pi)}(|S|-1)^2\delta_{0:S}\right)=(n-3)(n-2)^2$$
where $\textbf{red}(\pi)$ denotes the reductions sets of $\pi$ and the intersection number follows by observing that when $S\subset\{2,\dots,n\}$ then $B_1\cdot\delta_{0:S}=1$ for $|S|=n-2$ and zero otherwise.

Similarly, for $k\in\{2,\dots,n\}$ we obtain
$$B_k\cdot\pi^*\psi_i=\begin{cases}1&\text{for $i=1$}\\
-1&\text{otherwise,} \end{cases}\quad B_k\cdot \pi^*D_{i,j}=1,\quad \text{and}\quad B_k\cdot\pi^*\kappa_1=(n-2)^2$$

Hence on $ \Mbar{0}{\cc_1}$ 
$$\psi_1=D_{i,j}=\frac{1}{(n-2)^2}\kappa_1=-\psi_k\quad\text{ for $k=2,\dots,n$}$$
and we obtain
\begin{eqnarray*}
\gamma(\b{a})&=&\left((n-2)^2-(1-a_1)^2+\sum_{j=2}^{n}(1-a_j)^2+2\sum_{2\leq i<j\leq n}(a_ia_j-1)\right)\psi_1\\
&=& \left((1-\sum_{j=2}^{n}a_j)^2-(1-a_1)^2\right)\psi_1.
\end{eqnarray*}
Hence 
 \begin{eqnarray*}
 V_{0,\cc_1}(i\b{\theta}) &=&\frac{(2\pi^2)^{n-3}}{(n-3)!} \int_{\overline{\cM}_{0,\cc_1}}\gamma(\b{a})^{n-3}\\
 &=&\frac{(2\pi^2)^{n-3}}{(n-3)!} \left((1-\sum_{j=2}^{n}a_j)^2-(1-a_1)^2\right)^{n-3} \int_{\overline{\cM}_{0,\cc_1}}\psi_1^{n-3}\\
 &=&\frac{(2\pi^2)^{n-3}}{(n-3)!} \left((1-\sum_{j=2}^{n}a_j)^2-(1-a_1)^2\right)^{n-3} 
\end{eqnarray*}
which agrees with \eqref{Vcc0}.
\end{proof}
From this calculation, the wall-crossing polynomial \eqref{WCF} becomes
\begin{equation}  \label{wckernel}
\wc_{\cc,S}(\boldsymbol{\theta})=\frac{1}{(|S|-2)!\cdot 2^{|S|-2}} \int_{0}^{\phi_S} V_{g,\cc/S}(i\boldsymbol{\theta}_{S^c},i\theta)\cdot \left(\phi_S^2-\theta^2\right)^{|S|-2} \cdot \theta \cdot d\theta
\end{equation}
where $\phi_S=\sum_{j\in S}\theta_j-2\pi(|S|-1)$.

\begin{corollary} \label{cor:wallderiv}
    For any $j\in S$, 
    $$   \left.\frac{\partial}{\partial \theta_{j}}\right|_{\theta_j=2\pi} \wc_{\cc,S}(\boldsymbol{\theta})=\begin{cases}\phi_{S\setminus\{j\}}\cdot \wc_{\cc|_{\b{n}\setminus\{j\}},S\setminus\{j\}}(\boldsymbol{\theta})&\text{ if $|S|>2$}\\
 \theta_{S\setminus\{j\}}\cdot V_{g,\cc/S}(i\b{\theta})&\text{ if $|S|=2$}.\end{cases}$$
\end{corollary}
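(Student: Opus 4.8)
The plan is to differentiate the explicit kernel form \eqref{wckernel} of the wall-crossing polynomial directly. For $j\in S$ the only $\theta_j$-dependence of $\wc_{\cc,S}$ is through $\phi_S=\sum_{i\in S}\theta_i-2\pi(|S|-1)$, since the integrand $V_{g,\cc/S}(i\boldsymbol{\theta}_{S^c},i\theta)$ involves only the angles $\boldsymbol{\theta}_{S^c}$ and the integration variable, neither of which sees $\theta_j$. Writing $F(\theta):=V_{g,\cc/S}(i\boldsymbol{\theta}_{S^c},i\theta)$ and $\phi:=\phi_S$, so that $\partial\phi/\partial\theta_j=1$, I would compute $\partial_{\theta_j}\wc_{\cc,S}$ by the Leibniz rule applied to $\int_0^{\phi}F(\theta)(\phi^2-\theta^2)^{|S|-2}\theta\,d\theta$ regarded as a function of the single variable $\phi$.

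Then I would split according to $|S|$. For $|S|=2$ the kernel exponent is $0$, the integral is $\int_0^{\phi}F(\theta)\theta\,d\theta$, and the Leibniz derivative is purely the boundary term $F(\phi)\phi$; evaluating at $\theta_j=2\pi$ gives $\phi_S=\theta_{S\setminus\{j\}}$, hence $\theta_{S\setminus\{j\}}\,V_{g,\cc/S}(i\boldsymbol{\theta})$. For $|S|>2$ the boundary term vanishes because $(\phi^2-\phi^2)^{|S|-2}=0$, leaving only differentiation of the kernel, which produces a factor $2(|S|-2)\phi(\phi^2-\theta^2)^{|S|-3}$ inside the integral. The combinatorial prefactor then collapses via $\frac{2(|S|-2)}{(|S|-2)!\,2^{|S|-2}}=\frac{1}{(|S|-3)!\,2^{|S|-3}}$, so that, with $S':=S\setminus\{j\}$ and $|S'|=|S|-1$,
\[
\partial_{\theta_j}\wc_{\cc,S}=\phi\cdot\frac{1}{(|S'|-2)!\,2^{|S'|-2}}\int_0^{\phi}F(\theta)(\phi^2-\theta^2)^{|S'|-2}\theta\,d\theta,
\]
which is exactly the shape of \eqref{wckernel} for a crossing of $W_{S'}$.

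The step I expect to be the main obstacle is the bookkeeping that identifies this surviving integral, after setting $\theta_j=2\pi$, as $\wc_{\cc|_{\b{n}\setminus\{j\}},S\setminus\{j\}}$. First I would check that $\phi_S|_{\theta_j=2\pi}=\sum_{i\in S\setminus\{j\}}\theta_i-2\pi(|S|-2)=\phi_{S\setminus\{j\}}$, matching the upper limit. Next, directly from \eqref{quotient} and \eqref{restriction}, I would verify the chamber identity $\cc/S=(\cc|_{\b{n}\setminus\{j\}})/(S\setminus\{j\})$: both are chambers in $\cd_{g,n-|S|+1}$ on the point set $(\b{n}\setminus S)\cup\{*\}$, and comparing their values on subsets that do or do not contain the merged point $*$ shows they agree. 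Finally, since the complement of $S\setminus\{j\}$ inside $\b{n}\setminus\{j\}$ is exactly $S^c$, the angle arguments $\boldsymbol{\theta}_{S^c}$ coincide, so $F$ is the integrand of $\wc_{\cc|_{\b{n}\setminus\{j\}},S\setminus\{j\}}$ and the surviving integral equals that wall-crossing polynomial. Assembling the two cases gives the claim; the only genuine care needed is the $0^0=1$ convention separating $|S|=2$ from $|S|>2$, and the combinatorial chamber identity.
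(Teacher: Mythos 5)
Your proposal is correct and follows essentially the same route as the paper's proof: both differentiate the kernel form \eqref{wckernel} via the Leibniz rule, observe that for $|S|>2$ the boundary term vanishes while the kernel derivative collapses the prefactor $\frac{2(|S|-2)}{(|S|-2)!\,2^{|S|-2}}=\frac{1}{(|S|-3)!\,2^{|S|-3}}$, and then use $\phi_S\mid_{\theta_j=2\pi}=\phi_{S\setminus\{j\}}$ to recognise the result as $\phi_{S\setminus\{j\}}\cdot\wc_{\cc|_{\b{n}\setminus\{j\}},S\setminus\{j\}}$ (with the $|S|=2$ case reduced to the pure boundary term, exactly as in the paper). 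Your explicit check of the chamber identity $\cc/S=(\cc|_{\b{n}\setminus\{j\}})/(S\setminus\{j\})$ from \eqref{quotient} and \eqref{restriction} is a point the paper leaves implicit, and is a worthwhile addition rather than a deviation.
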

\begin{proof}
    Let $j\in S$. In the case where $|S|=2$, wall-crossing is given simply by $\wc_{\cc,S}(\boldsymbol{\theta})=\int_{0}^{\phi_S} V_{g,\cc/S}(i\boldsymbol{\theta}_{S^c},i\theta) \cdot \theta \cdot d\theta$, and the integrand is independent of $\theta_j$. The result follows since $\phi_S\mid_{\theta_{j}=2\pi}=\theta_{S\setminus\{j\}}$.
    
    In the case where $|S|>2$, 
    \begin{align*}
        \frac{\partial}{\partial \theta_{j}}\wc_{\cc,S}(\boldsymbol{\theta})&=\frac{1}{(|S|-2)!\cdot 2^{|S|-2}} \int_{0}^{\phi_S} V_{g,\cc/S}(i\boldsymbol{\theta}_{S^c},i\theta)\cdot\theta\cdot
         \frac{\partial}{\partial \phi_S}\left(\phi_S^2-\theta^2\right)^{|S|-2} d\theta \\
         &=\frac{1}{(|S|-3)!\cdot 2^{|S|-3}} \int_{0}^{\phi_S} V_{g,\cc/S}(i\boldsymbol{\theta}_{S^c},i\theta)\cdot\theta\cdot
         [(\phi_S^2-\theta^2)^{|S|-3}\cdot\phi_S] d\theta.
    \end{align*}
    Set $\cc|_j:=\cc|_{\b{n}\setminus\{j\}}$ and $S_j:=S\setminus\{j\}$. Since $\phi_S\mid_{\theta_{j}=2\pi}=\phi_{S\setminus\{j\}}$, it follows that
        \begin{align*}
        &\left.\frac{\partial}{\partial \theta_{j}}\right|_{\theta_j=2\pi}\hspace{-4mm}\wc_{\cc,S}(\boldsymbol{\theta}) \\
        &\qquad =\frac{1}{(|S|-3)!\cdot 2^{|S|-3}} \int_{0}^{\phi_{S_j}} V_{g,\cc'/S_j}(i\boldsymbol{\theta}_{S_j^c},i\theta)\cdot\theta\cdot
         [(\phi_{S_j}^2-\theta^2)^{|S|-3}\cdot\phi_{S_j}] d\theta \\
        &\qquad =\phi_{S_j}\cdot \wc_{\cc|_j,S_j}(\boldsymbol{\theta}).
    \end{align*}
\end{proof}

Theorem~\ref{Thm:Dilaton} applies to chambers that are flat in the $(n+1)$th coordinate.  It can be generalised to other chambers as follows. Let  $\cc\subset\cd_{g,n+1}$ be any chamber and $\{\cb_j,S_j\}_{j=1}^k$ such that $n+1\in S_j$ and $|S_j|\geq 3$ be a series of simple wall-crossings from $\cc=\cb_1$ to the unique chamber $\cb_{k+1}$ that is flat in the $(n+1)$th coordinate. Then
 \begin{align}\label{eqn:derivativegeneral}\begin{split}
 \frac{\partial V_{g,\cc}}{\partial\theta_{n+1}}(i\theta_1,\dots,i\theta_n,2\pi i)\hspace{-0.5mm}&=\hspace{-0.5mm}-2\pi\Big(2g-2+|q(\cc)|+\hspace{-2mm}\sum_{j\notin q(\cc)} a_j\Big)V_{g,\cc|_{\b{n}}}(i\theta_1,\dots,i\theta_n)\\
&\qquad-\sum_{j=1}^k\frac{\partial\wc_{\cb_j,S_j}}{\partial \theta_{n+1}}(\theta_1,\dots,\theta_{n},2\pi )
 \end{split}\end{align}
Corollary~\ref{cor:wallderiv} is then useful to express these derivatives as wall-crossing polynomials themselves. As an application, we present the following corollary that generalises the dilaton equation to a broader class of chambers.

\begin{corollary}\label{cor:generalchamber}
Let $\cc \subset \cD_{g, n+1}$ be any chamber 
and $\left\{\cB_j, S_j \cup\{n+1\}\right\}_{j=0}^k$ for $S_j \subset \boldsymbol{n}$ be any series of simple wall-crossings from a chamber $\cB_0$, flat in the $(n+1)$th coordinate, to chamber $\cc$ such that all $|S_j|\geq2$. Then
\begin{align*}
\frac{\partial V_{g, \mathcal{C}}}{\partial \theta_{n+1}}(i\b{\theta},2\pi i)=- &2 \pi\Big(2 g-1+\left|q\left(\cc_0\right)\right|+\sum_{j \notin q(\cc)} a_j-\sum_{i \in S_0} a_i\Big) V_{g,\left.\cB_0\right|_n}(i\b{\theta}) \\
& +\sum_{j=1}^k 2 \pi\Big(\sum_{i \in S_j} a_i-\sum_{i \in S_{j-1}} a_i\Big) V_{g,\left.\cB_j\right|_n}(i\b{\theta}) \\
& +2 \pi\Big(1-\sum_{i \in S_k} a_i\Big) V_{g,\left.\cc\right|_n}(i\b{\theta}),
\end{align*}
where $q(\cc)=\left\{j \mid W_{\{j, n+1\}}\right.$ is crossed from $\cc_0$ as $\left.\theta_{n+1} \rightarrow 2 \pi\right\}$ and $\b{\theta}=(\theta_1,\ldots,\theta_n)$.
\end{corollary}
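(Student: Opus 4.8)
The plan is to combine the generalised dilaton identity \eqref{eqn:derivativegeneral} with the wall-derivative formula of Corollary~\ref{cor:wallderiv} and then telescope. Identity \eqref{eqn:derivativegeneral} already rewrites $\frac{\partial V_{g,\cc}}{\partial\theta_{n+1}}(i\b{\theta},2\pi i)$ as the flat-chamber term $-2\pi(2g-2+|q(\cc)|+\sum_{j\notin q(\cc)}a_j)V_{g,\cc|_{\b{n}}}$ coming from $\cB_0$ via Theorem~\ref{Thm:Dilaton}, minus the sum over the crossings along the path of the $\theta_{n+1}$-derivatives of the wall-crossing polynomials attached to the walls $W_{S_j\cup\{n+1\}}$. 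First I would record that, since every crossed wall has $|S_j\cup\{n+1\}|\geq 3$, none of the two-element walls $W_{\{i,n+1\}}$ is crossed; hence $\cc(\{i,n+1\})=\cc_0(\{i,n+1\})$ for all $i$, so $q(\cc)=q(\cc_0)$, and moreover $\cc|_{\b{n}}=\cB_0|_{\b{n}}$ since a simple crossing of an $(n+1)$-containing set never alters the restriction to $\b{n}$. Thus the flat term is already of the shape of the first line of the statement, up to the $S_0$-correction produced below.

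Next I would differentiate each wall-crossing polynomial by applying Corollary~\ref{cor:wallderiv} with the distinguished index $n+1$. As $|S_j\cup\{n+1\}|\geq 3$, this yields $\frac{\partial}{\partial\theta_{n+1}}\big|_{\theta_{n+1}=2\pi}\wc_{\,\cdot\,,S_j\cup\{n+1\}}=\phi_{S_j}\cdot\wc_{\,\cdot\,,S_j}$, a wall-crossing across $W_{S_j}$ in $\cD_{g,n}$ scaled by $\phi_{S_j}=\sum_{i\in S_j}\theta_i-2\pi(|S_j|-1)$. Using $a_i=1-\theta_i/2\pi$ this scalar becomes $\phi_{S_j}=2\pi(1-\sum_{i\in S_j}a_i)$, which is exactly the source of the $2\pi(1-\sum_{i\in S_j}a_i)$-type coefficients. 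Replacing each $\wc_{\,\cdot\,,S_j}$ by the difference $V_{g,\text{below}}-V_{g,\text{above}}$ of the volume polynomials of its incident chambers and collecting the coefficient of each volume, I would obtain $\phi_{S_{j-1}}-\phi_{S_j}=2\pi(\sum_{i\in S_j}a_i-\sum_{i\in S_{j-1}}a_i)$ for the interior chambers $\cB_j|_{\b{n}}$, the flat coefficient combined with $-\phi_{S_0}$ for $\cB_0|_{\b{n}}$ (this turns $2g-2$ into $2g-1$ and inserts the $-\sum_{i\in S_0}a_i$ of the first line), and $\phi_{S_k}=2\pi(1-\sum_{i\in S_k}a_i)$ for the final $\cc|_{\b{n}}$ term.

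The main obstacle is the chamber bookkeeping in this telescoping. Corollary~\ref{cor:wallderiv} produces every wall-crossing polynomial $\wc_{\,\cdot\,,S_j}$ based at the \emph{same} restricted chamber $\cc|_{\b{n}}$ (all intermediate restrictions coincide, by the remark above), whereas the telescoped statement requires them organised along a single chain $\cB_0|_{\b{n}}\to\cB_1|_{\b{n}}\to\cdots\to\cc|_{\b{n}}$ of chambers of $\cD_{g,n}$, so that the lower chamber of the $W_{S_j}$-crossing is the upper chamber of the $W_{S_{j-1}}$-crossing. To re-base the wall-crossings onto this chain I would invoke the independence of the wall-crossing polynomial on the incident chamber, namely Corollary~\ref{cor:quotchambercrossing} together with Proposition~\ref{prop:chambquotequiv}: the separating sets must meet $S_j$ in order for the relevant quotients, and hence the wall-crossing polynomials, to agree, and here the fact that the crossed sets all share the index $n+1$ is what one must leverage. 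Establishing this compatibility, tracking the above/below orientation at each wall so that the signs in $V_{g,\text{below}}-V_{g,\text{above}}$ come out correctly, and matching the telescoped coefficients against the three lines of the statement is where essentially all the work lies; the remaining steps are the elementary substitution $\phi_S=2\pi(1-\sum_{i\in S}a_i)$ and reindexing. A clean alternative organisation would be induction on the number of crossings $k$, peeling off the top crossing $V_{g,\cc}=V_{g,\cB_k}-\wc_{\cc,S_k\cup\{n+1\}}$ and applying the inductive hypothesis to $\cB_k$ together with Corollary~\ref{cor:wallderiv} to the removed term, which localises the re-basing issue to a single wall at each step.
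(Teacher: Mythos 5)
Your proposal is correct and is essentially the paper's own argument: differentiate the telescoped identity $V_{g,\cc}=V_{g,\cB_0}+\sum_{j}\wc_{\cB_j,S_j\cup\{n+1\}}$ at $\theta_{n+1}=2\pi$, handle the flat-chamber term with Theorem~\ref{Thm:Dilaton}, convert each wall term with Corollary~\ref{cor:wallderiv} via $\phi_{S_j}=2\pi\big(1-\sum_{i\in S_j}a_i\big)$, and Abel-sum the resulting coefficients. The ``chamber bookkeeping'' you single out as the main obstacle is exactly the step the paper dispatches in one phrase (``eliminating the wall-crossing terms in the result''), so your extra care there --- including invoking Corollary~\ref{cor:quotchambercrossing} and Proposition~\ref{prop:chambquotequiv} to re-base the crossings along the induced chain in $\cD_{g,n}$ --- is a refinement of, not a departure from, the paper's proof.
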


\begin{proof}
Corollary~\ref{cor:wallderiv} states for $\left|S_j\right| \geq 2$,
$$\frac{\partial \wc_{\cB_j, S_j \cup\{n+1\}}}{\partial \theta_{n+1}}(i \b{\theta}, 2\pi)=2 \pi\Big(1-\sum_{i \in S_j} a_i\Big) \wc_{\left.\cB_j\right|_n, S_j}(i \b{\theta})$$
The result then follows by differentiating and evaluating both sides of the equation
$$V_{g, \cc}(i\b{\theta},i\theta_{n+1})=V_{g, \cB_0}(i\b{\theta},i\theta_{n+1})+\sum_{j=0}^k \wc_{\cB_j, S_j \cup\{n+1\}}(i\b{\theta},i\theta_{n+1})$$
and eliminating the wall-crossing terms in the result.
\end{proof}
A similar formula can be derived allowing for the possibility of walls $W_{S_j\cup\{n+1\}}$ with $|S_j|=1$. Such a formula generalises the dilaton equation to all chambers.

\subsection{Losev-Manin Spaces}
Define the \textit{Losev-Manin chamber} \begin{equation*}
    \cL_n:\cP(\b{n+2})\rightarrow \{0,1\},\qquad \cL_n(J)=0\quad\Leftrightarrow\quad \{n+1,n+2\}\cap J=\emptyset.
\end{equation*}
It is the unique chamber of $\cD_{0,n+2}$ which contains the element $\b{l}_n:=(\overbrace{\epsilon,\ldots,\epsilon}^n,1,1)$ for $\epsilon>0$ small.

The moduli space $\overline{\cM}_{0,\b{l}_n}$ defined in the chamber $\cL_n$ was studied by Losev and Manin in \cite{LMaNew}, and later by Hassett \cite{HasMod} and Cavalieri \cite{CavMod}. As such, $\overline{\cM}_{0,\b{l}_n}$ is called the \textit{Losev-Manin space}. \\ \indent 
In this section we use intersection theory on $\overline{\cM}_{0,\b{l}_n}$ in order to compute the polynomial
$V_{0,\cL_n}(i\b{\theta})$, in the case where $\theta_{n+1}\equiv\theta_{n+2}\equiv 0$. This corresponds to a volume polynomial for
$\cM^{\text{hyp}}_{0,n+2}(i\b{\theta})$, where $\b{a}(\b{\theta})=(\epsilon_1,\ldots,\epsilon_n,1,1)\in\cL_n$. 
\begin{lemma}
    In the Losev-Manin chamber $\cL_n$, \begin{equation*}
    V_{0,\cL_n}(i\b{\theta},0,0)=(2\pi)^{2(n-1)}\left(\prod_{j=1}^n\epsilon_j\right)\left(\sum_{k=1}^n\epsilon_k\right)^{n-2},
\end{equation*}
   where $\epsilon_k=a(\theta_k)$ for $k\in\{1,\ldots,n\}$ and $\theta_{n+1},\theta_{n+2}$ are fixed as constant in the ring of functions, such that $a(\theta_{n+1}),a(\theta_{n+2})\equiv 1$.\\ \indent 
  Moreover, under the identification $\epsilon_i=\epsilon_j=\epsilon$ there is a recursion formula, \begin{equation*}
 \cV_n(\epsilon)=(2\pi \epsilon)^2\sum_{\substack{i+j=n\\ 1\leq i\leq j\leq n-1}}\frac{ij}{n-1}{n \choose i}\cV_i(\epsilon)\cV_j(\epsilon),
 \end{equation*}
 where $\cV_k(\epsilon)= V_{0,\cL_k}(i\b{\theta})$ in this identification ie. such that $\b{a}(\b{\theta})=\b{l}_n$.
\end{lemma}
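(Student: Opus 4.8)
The plan is to prove the closed formula for $V_{0,\cL_n}(i\bold{\theta},0,0)$ by induction on $n$, using that forgetting a light point is a flat morphism so that Theorem~\ref{Thm:Dilaton} applies, and then to deduce the recursion by substituting the closed formula and reducing to a tree-counting identity. First I record the structure of the Losev-Manin chamber. Forgetting the $n$th light point gives $\varphi:\Mbar{0}{\cL_n}\to\Mbar{0}{\cL_{n-1}}$ whose contraction set is $\mathcal{T}=\{\{n,n+1\},\{n,n+2\}\}$, since a light point can only destabilise against a weight-$1$ pole. As these have order $2$, Lemma~\ref{lem:forgetflat} shows $\varphi$ is flat and realises $\Mbar{0}{\cL_n}$ as the universal curve over $\Mbar{0}{\cL_{n-1}}$, and $q(\cL_n)=\{j\mid\cL_n(\{j,n\})=1\}=\{n+1,n+2\}$. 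Writing $a_k=a(\theta_k)$ with $a_{n+1}=a_{n+2}=1$ and $\epsilon_j=a_j$ for $j\le n$, Lemma~\ref{lem:pullbackgamma} gives
\[
\gamma_{\cL_n}(\bold{a})=\varphi^*\gamma_{\cL_{n-1}}(\bold{a})+a_n(2-a_n)\psi_n+2a_n\sum_{i=1}^{n-1}a_iD_{i,n},
\]
where crucially the $\mathcal{T}$-correction vanishes because each $S\in\mathcal{T}$ contains a pole, so $(1-\sum_{j\in S-\{n\}}a_j)^2=(1-1)^2=0$.

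Next I extract two pieces of information at $\epsilon_n=0$. Setting $a_n=0$ in the displayed identity leaves $\gamma_{\cL_n}(\bold{a})=\varphi^*\gamma_{\cL_{n-1}}(\bold{a})$, whose top power is $\varphi^*(\gamma_{\cL_{n-1}}^{\,n-1})=0$ since $\gamma_{\cL_{n-1}}^{\,n-1}$ lies above the dimension $n-2$ of $\Mbar{0}{\cL_{n-1}}$; this is the mechanism of Lemma~\ref{Thm:2pilimits} and yields $V_{0,\cL_n}(i\bold{\theta},0,0)|_{\epsilon_n=0}=0$. By symmetry $V_{0,\cL_n}$ vanishes on each $\epsilon_j=0$, hence is divisible by $\prod_{j=1}^n\epsilon_j$. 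Applying Theorem~\ref{Thm:Dilaton} to the flat point $n$, so that $2g-2+|q(\cL_n)|+\sum_{j\notin q}a_j=\sum_{j=1}^{n-1}\epsilon_j$, and converting $\partial/\partial\theta_n$ to $\partial/\partial\epsilon_n$ via $da/d\theta=-1/2\pi$, gives
\[
\left.\frac{\partial V_{0,\cL_n}}{\partial\epsilon_n}\right|_{\epsilon_n=0}=(2\pi)^2\Big(\sum_{j=1}^{n-1}\epsilon_j\Big)V_{0,\cL_{n-1}}(i\bold{\theta}).
\]

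For the reconstruction I write $V_{0,\cL_n}=(\prod_{j=1}^n\epsilon_j)P$ with $P$ symmetric of degree at most $2(n-1)-n=n-2$. The two inputs above give $\prod_{j<n}\epsilon_j\cdot P|_{\epsilon_n=0}=(2\pi)^2(\sum_{j<n}\epsilon_j)V_{0,\cL_{n-1}}$, and the induction hypothesis $V_{0,\cL_{n-1}}=(2\pi)^{2(n-2)}(\prod_{j<n}\epsilon_j)(\sum_{j<n}\epsilon_j)^{n-3}$ yields $P|_{\epsilon_n=0}=(2\pi)^{2(n-1)}(\sum_{j<n}\epsilon_j)^{n-2}$. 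To recover $P$ from this restriction, I use that every symmetric polynomial of degree $\le n-2$ in $n$ variables is a polynomial in $e_1,\dots,e_{n-2}$, which restrict under $\epsilon_n=0$ to the algebraically independent elementary symmetric functions in $n-1$ variables; hence the restriction map is injective on symmetric polynomials of degree $\le n-2$. Therefore $P=(2\pi)^{2(n-1)}(\sum_{j=1}^n\epsilon_j)^{n-2}$, which is the claimed formula; the base case is $\Mbar{0}{\cL_1}$, a point of volume $1$.

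Finally, for the recursion I would substitute the closed formula, which under the identification $\epsilon_i\equiv\epsilon$ becomes $\cV_k(\epsilon)=(2\pi)^{2(k-1)}k^{k-2}\epsilon^{2(k-1)}$. Both sides equal $(2\pi)^{2(n-1)}\epsilon^{2(n-1)}$ times a numerical constant, and after cancelling these factors the recursion is equivalent to
\[
\sum_{\substack{i+j=n\\1\le i\le j\le n-1}}\binom{n}{i}i^{i-1}j^{j-1}=(n-1)n^{n-2},
\]
which is Cayley's formula in disguise: the right-hand side counts labelled trees on $n$ vertices with a distinguished edge, while deleting that edge splits the tree into two rooted trees on a partition into blocks of sizes $i$ and $j=n-i$, contributing $\binom{n}{i}i^{i-1}j^{j-1}$ once the factor $2$ between ordered and unordered splits is accounted for. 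I expect the main obstacle to be the reconstruction step, since Theorem~\ref{Thm:Dilaton} only supplies the $\epsilon_n$-derivative at $\epsilon_n=0$: the argument genuinely relies on the extra vanishing $V_{0,\cL_n}|_{\epsilon_n=0}=0$ (which forces the weight-$1$ poles to kill the $\mathcal{T}$-correction) together with the injectivity of restriction for low-degree symmetric polynomials; with these in hand both the induction and the closing combinatorial identity are routine.
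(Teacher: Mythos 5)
Your proof of the closed formula is correct and follows the same skeleton as the paper's (vanishing at $\epsilon_n=0$, the dilaton-type derivative from Theorem~\ref{Thm:Dilaton} with $q(\cL_n)=\{n+1,n+2\}$, then reconstruction of the symmetric factor $P$ by induction), but you obtain the vanishing step by a genuinely different mechanism. The paper derives $V_{0,\cL_n}(i\b{\theta},0,0)\mid_{\epsilon_n=0}=0$ by crossing the two walls $W_{\{n,n+1\}}$, $W_{\{n,n+2\}}$ into a chamber light in the $n$th coordinate and observing via \eqref{eqn:incidentzerowc} that both wall-crossing polynomials vanish because $\phi_{S_j}=\theta_n+\theta_{n+j}-2\pi\to 0$ at $\theta_n=2\pi$, $\theta_{n+j}=0$. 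You instead run the mechanism of Lemma~\ref{Thm:2pilimits} directly in the flat (non-light) chamber, noting that the $\mathcal{T}$-correction terms in Lemma~\ref{lem:pullbackgamma} carry coefficients $(1-a_{n+1})^2=(1-a_{n+2})^2=0$ because the two poles have weight exactly $1$, so at $a_n=0$ one gets $\gamma_{\cL_n}=\varphi^*\gamma_{\cL_{n-1}}$ and the top power dies for dimension reasons. This is a clean and correct variant: it avoids invoking the wall-crossing machinery for this step and explains structurally why lightness can be relaxed here, while the paper's route illustrates how \eqref{eqn:incidentzerowc} is used in practice. You also supply the justification (injectivity of restriction $\epsilon_n=0$ on symmetric polynomials of degree $\leq n-2$, via $e_1,\dots,e_{n-2}$) for the uniqueness claim that the paper merely asserts with ``its evaluation at $\epsilon_n=0$ defines it uniquely''; your degenerate base case $n=1$ (a point, volume $1$) feeds the induction just as well as the paper's $n=2$ base case from Section~\ref{ex:M04}.

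On the recursion, your reduction is faithful: substituting $\cV_k(\epsilon)=(2\pi)^{2(k-1)}k^{k-2}\epsilon^{2(k-1)}$ does make the stated recursion equivalent to
\begin{equation*}
\sum_{\substack{i+j=n\\ 1\leq i\leq j\leq n-1}}\binom{n}{i}i^{i-1}j^{j-1}=(n-1)n^{n-2},
\end{equation*}
but be aware that this identity, exactly like the recursion as printed in the paper, mishandles the symmetric middle term: the correct Cayley count is $\sum_{i=1}^{n-1}\binom{n}{i}i^{i-1}(n-i)^{n-i-1}=2(n-1)n^{n-2}$ over \emph{ordered} splits, and passing to $i\leq j$ requires weight $\tfrac12$ on the $i=j$ term when $n$ is even. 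Indeed at $n=2$ the only term is $(i,j)=(1,1)$ and both the stated recursion and the displayed identity give twice the correct value ($\cV_2=(2\pi)^2\epsilon^2$, not $2(2\pi)^2\epsilon^2$), and at $n=4$ the unordered sum is $60\neq 48$. This defect is inherited from the paper rather than introduced by you --- your phrase ``once the factor $2$ \ldots is accounted for'' gestures at the fix --- but as written both your identity and the lemma's recursion should carry a symmetry factor $\tfrac12$ on the $i=j$ term (equivalently, be summed over ordered pairs and halved).
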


\begin{proof}
Define the sets $S_j=\{n,n+j\}$ for $j=1$ or $2$.  Cross the walls $W_{S_1}$ and $W_{S_2}$ to travel from 
$\cL_n$ to a chamber $\cB$ which is light in the $n$th coordinate, where Lemma~\ref{Thm:2pilimits} applies.
Using \eqref{eqn:incidentzerowc}, we have \begin{equation} 
    V_{0,\cL_n}(i\b{\theta})\mid_{\epsilon_n=0}=-\sum_{j=1}^2\wc_{\cb_j,S_j}(\theta_1,\dots,\theta_{n-1},2\pi ,\theta_{n+1},\theta_{n+1}),
\end{equation}
where $\cB_1=\cc$ and $\cB_2$ is intermediate chambers in the wall-crossing. Evaluation at 
$\theta_{n+1}=\theta_{n+2}=0$ of these wall-crossing polynomials causes them to vanish, since 
$\phi_{S_j}=\theta_n+\theta_j-2\pi\mid_{\theta_n=2\pi}\to 0$, so by Theorem~\ref{wall-crossing formula}, \begin{equation} \label{dil1LM}
    V_{0,\cL_n}(i\b{\theta},0,0)\mid_{\epsilon_n=0}=0. 
\end{equation}
The Losev-Manin chamber $\cL_n$  is flat in the $n$th coordinate, this means we can apply Theorem~\ref{Thm:Dilaton} with $q(\cc)=\{n+1,n+2\}$.
Using \eqref{eqn:derivative}, we have \begin{align}
   \left.\frac{\partial}{\partial \epsilon_{n}}\right|_{\theta_n=2\pi} \hspace{-4mm}V_{0,\cL_n}(i\b{\theta},0,0)&= 4\pi^2\Big(2g-2+2+\sum_{j=1}^{n-1} 
   a(\theta_j)\Big)V_{0,\cL_{n-1}}(i\theta_1,\ldots,i\theta_{n-1},0,0)  \nonumber \\
    &=4\pi^2\left(\sum_{j=1}^{n-1}a(\theta_j)\right)V_{0,\cL_{n-1}}(i\theta_1,\ldots,i\theta_{n-1},0,0),\label{dil2LM}
\end{align}
where we have used that $\cL_n|_{\b{n+2}\setminus\{n\}}=\cL_{n-1}$. 

We know that $V_{0,\cL_n}(i\b{\theta},0,0)$ is a $2(n-1)$-degree symmetric polynomial in $\epsilon_j$. Additionally, \eqref{dil1LM} implies that it must have a factor of $\epsilon_1\epsilon_2\cdots \epsilon_n$. Define $(n-2)$-degree symmetric polynomials $P_n:[0,1]^n\rightarrow \bbR$ such that $$V_{0,\cL_n}(i\b{\theta},0,0)=\epsilon_1\epsilon_2\cdots \epsilon_nP_n(\epsilon_1,\epsilon_2,\ldots, \epsilon_n).$$ Then \eqref{dil2LM} gives the identity \begin{equation}
    P_n(\epsilon_1,\ldots,\epsilon_{n-1},0)=4\pi^2\left(
    \sum_{k=1}^{n-1}\epsilon_k\right)P_{n-1}(\epsilon_1,\ldots,\epsilon_{n-1}). \label{recursion1}
\end{equation}
Since $P_n$ is symmetric, its evaluation at $\epsilon_n=0$ defines it uniquely. The result from Section~\ref{ex:M04} in chamber $\cB_{1}$ (which is equivalent to the chamber $\cL_2$ in this section) gives that $V_{0,\cL_2}(i\theta_1,i\theta_2,0,0)=4\pi^2\epsilon_1\epsilon_2$. It follows that $P_2(\epsilon_1,\epsilon_2)=4\pi^2$. Starting from here, and using \eqref{recursion1}, we have that
 \begin{equation*}
    P_n(\epsilon_1,\ldots,\epsilon_n)=(2\pi)^{2(n-1)}\left(\sum_{k=1}^n\epsilon_k\right)^{n-2}, 
\end{equation*}
so \begin{equation*}
   V_{0,\cL_n}(i\b{\theta},0,0)=(2\pi)^{2(n-1)}\left(\prod_{j=1}^n\epsilon_j\right)\left(\sum_{k=1}^n\epsilon_k\right)^{n-2}.
\end{equation*}
Let $\cV_k(\epsilon)= V_{0,\cL_k}(i\b{\theta},0,0)\vert_{\epsilon_i=\epsilon_j}$ for $k\geq 2$. Then $\cV_{n+2}(\epsilon)=(2\pi)^{2(n-1)}n^{n-2}\epsilon^{2(n-1)}$. To write this as a recursive formula, we use Cayley's formula for counting the number of different labelled trees on $n$ vertices \cite{CayTre, ShuSho}. Using this identity, we obtain 
\begin{align*}
    \cV_{n+2}(\epsilon)&=\sum_{\substack{i+j=n\\ 1\leq i\leq j\leq n-1}}n\cdot(2\pi \epsilon)^2{n-2 \choose i-1}\cV_{i+2}(\epsilon)\cV_{j+2}(\epsilon)\\
    &=(2\pi \epsilon)^2\sum_{\substack{i+j=n\\ 1\leq i\leq j\leq n-1}}\frac{ij}{n-1}{n \choose i}\cV_{i+2}(\epsilon)\cV_{j+2}(\epsilon).
\end{align*}
\end{proof}

\subsection{Chamber isomorphic to $(\bbC P^1)^n$}
Define the chamber  \begin{equation*}
    \cA_n:\cP(\b{n+3})\rightarrow \{0,1\},\qquad \cA_n(J)=0\quad\Leftrightarrow\quad |\{n+1,n+2,n+3\}\cap J|\leq 1.
\end{equation*}
It is the unique chamber of $\cD_{0,n+3}$ which contains the element 
\[\b{a}_n=(\overbrace{\epsilon,\ldots,\epsilon}^n,2/3,2/3,2/3)\] for $\epsilon>0$ small. \\ \indent 
The moduli space $\overline{\cM}_{0,\b{a}_n}$ defined in the chamber $\cA_n$ is isomorphic to $(\bbC P^1)^{n}$. This compactification of $\cM_{0,n}$ was studied in \cite[\textsection 6.3,\,8]{HasMod}. \\ \indent
In this section we use intersection theory on $\overline{\cM}_{0,\b{a}_n}$ in order to compute the polynomial
$V_{0,\cA_n}(i\b{\theta})$. This corresponds to a volume polynomial for
$\cM^{\text{hyp}}_{0,n+3}(i\b{\theta})$, where $\b{a}(\b{\theta})=(\epsilon_1,\ldots,\epsilon_n,b_1,b_2,b_3)\in\cA_n$. 
\begin{lemma}
   In the chamber $\cA_n$, \begin{equation*}
    V_{0,\cA_n}(i\b{\theta})=(2\pi)^{2n}\left(\prod_{j=1}^n\epsilon_j\right)\left(-2+\sum_{j=1}^n\epsilon_j+\sum_{k=1}^3 b_k\right)^n,
\end{equation*}
where $\epsilon_j=a(\theta_j)$ for $j\in\{1,\ldots,n\}$ and $b_k=a(\theta_{n+k})$ for $k\in\{1,2,3\}$.
\end{lemma}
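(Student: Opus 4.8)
The plan is to follow the intersection-theoretic strategy of Proof~2 of Proposition~\ref{lem:lightchamber}, exploiting the explicit description $\overline{\cM}_{0,\cA_n}\cong(\bbC P^1)^n$ from \cite{HasMod}. First I would fix the product structure: the three heavy points are rigidified at $0,1,\infty$, so for each light point $j\in\{1,\dots,n\}$ the morphism $\pi_j$ forgetting all light points except the $j$th is the projection to the $j$th factor $\bbC P^1$. Writing $H_j:=\pi_j^*[\mathrm{pt}]$, the cohomology ring is $\bbZ[H_1,\dots,H_n]/(H_1^2,\dots,H_n^2)$ with $\int H_1\cdots H_n=1$. Since collisions are permitted in this chamber without a bubble forming, the divisors entering $\gamma(\b{a})$ are read off directly: the light–heavy collision locus is a fibre of $\pi_j$, so $D_{j,n+k}=H_j$ for each $k\in\{1,2,3\}$, while the light–light collision locus is the pullback of a diagonal, giving $D_{i,j}=H_i+H_j$.

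The key step is to pin down the remaining classes in
\begin{equation*}
\gamma(\b{a})=\kappa_1-\sum_\ell(1-a_\ell)^2\psi_\ell+\sum_{\cA_n(i,j)=0}(2a_ia_j-2)D_{i,j}.
\end{equation*}
I would compute the $\psi$ classes using test curves $B_j$ that fix every marked point except the $j$th light point and let it sweep out its $\bbC P^1$ factor, so that $B_i\cdot H_\ell=\delta_{i\ell}$. The family over $B_j$ has smooth fibres with three constant disjoint sections at $0,1,\infty$, trivialising it as $B_j\times\bbC P^1$, and a relative-dualising-sheaf computation then yields $B_j\cdot\psi_j=-2$ and $B_i\cdot\psi_j=0$ for $i\neq j$, hence $\psi_j=-2H_j$ for light points and, by the same argument, $\psi_{n+k}=0$ for the heavy points. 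For $\kappa_1$ I would invoke Proposition~\ref{prop:kappaclass}; here $\lambda=0$ and, crucially, the generically nodal boundary $\delta$ vanishes, because a genus-$0$ splitting $\delta_{0:S}$ is a divisor only if both $S$ and $S^c$ carry weight exceeding $1$, i.e.\ each contains at least two heavy points, which is impossible with only three heavy points. Combining this with the $D_{i,j}$ identifications gives $\kappa_1=2(n+1)\sum_j H_j$.

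Substituting into $\gamma(\b{a})$ and collecting the coefficient of each $H_j$, I expect the constants and cross terms to collapse to $\gamma(\b{a})=2\Sigma\sum_{j=1}^n\epsilon_j H_j$, where $\Sigma=-2+\sum_{j=1}^n\epsilon_j+\sum_{k=1}^3 b_k$ is the negative weighted Euler characteristic. Then, since $H_j^2=0$, only the squarefree monomial survives, so $\int_{\overline{\cM}_{0,\cA_n}}\gamma(\b{a})^n=n!\,(2\Sigma)^n\prod_{j}\epsilon_j$; dividing by $n!$ and multiplying by $(2\pi^2)^n$ as in Definition~\ref{def:vol} produces exactly $(2\pi)^{2n}\big(\prod_j\epsilon_j\big)\Sigma^n$.

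The main obstacle is the geometric bookkeeping of the second paragraph, namely correctly identifying $\psi_j$, $\psi_{n+k}$ and confirming $\delta=0$, all of which hinge on the fact that in $\cA_n$ collisions of light points (with each other or with heavy points) do not create nodal degenerations. As a cross-check and alternative route, I note that $\cA_n$ is \emph{light} in each light coordinate (the contraction set for forgetting a light point is empty once $\epsilon$ is small), so Lemma~\ref{Thm:2pilimits} gives the vanishing at $\epsilon_j=0$ and Theorem~\ref{Thm:Dilaton} with $q(\cA_n)=\emptyset$ gives a recursion expressing the $\epsilon_n=0$ data of $V_{0,\cA_n}$ through $V_{0,\cA_{n-1}}$; one then verifies the claimed formula satisfies both, exactly as in the Losev--Manin lemma. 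This second route, however, leaves the top coefficient of $\prod_j\epsilon_j^2$ undetermined by the recursion alone, which is precisely what the direct intersection computation supplies, so I would present the intersection-theoretic argument as the primary proof.
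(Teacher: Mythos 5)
Your proposal is correct, but it proves the lemma by a genuinely different route than the paper. The paper argues recursively, in exact parallel with its Losev--Manin computation: since $\cA_n$ is light in each of the first $n$ coordinates, Lemma~\ref{Thm:2pilimits} gives the vanishing at $\epsilon_j=0$ and hence the factor $\epsilon_1\cdots\epsilon_n$, Theorem~\ref{Thm:Dilaton} with $q(\cc)=\emptyset$ determines the restriction of $Q_n:=V_{0,\cA_n}/\prod_j\epsilon_j$ to $\{\epsilon_n=0\}$ in terms of $Q_{n-1}$, and the base case is the chamber $\cb_3$ computation of \S\ref{ex:M04}. Your primary argument instead evaluates $\gamma(\b{a})$ explicitly in $H^*\big((\bbC P^1)^n\big)$, in the style of Proof~2 of Proposition~\ref{lem:lightchamber}, and all of your identifications check out: $D_{j,n+k}=H_j$, $D_{i,j}=H_i+H_j$, $\psi_j=-2H_j$ (consistent, e.g., with the reduction formula \eqref{redmor}, which on $\cA_1$ gives $\deg\psi_1=1-3=-2$), $\psi_{n+k}=0$, and $\delta=0$ since both sides of a genus-$0$ splitting would need weight exceeding $1$, hence two heavy points each; Proposition~\ref{prop:kappaclass} then yields $\kappa_1=2(n+1)\sum_jH_j$, and collecting the coefficient of each $H_j$ in $\gamma(\b{a})$ indeed collapses to $2\epsilon_j\Sigma$ with $\Sigma=-2+\sum_j\epsilon_j+\sum_kb_k$, so $\frac{(2\pi^2)^n}{n!}\int\gamma(\b{a})^n=(2\pi)^{2n}\big(\prod_j\epsilon_j\big)\Sigma^n$ as claimed. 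As for what each buys: the paper's route exercises the paper's main theorems (the stated purpose of \S\ref{sec:ex}) and avoids divisor bookkeeping, while yours is self-contained --- and your closing caveat is sharper than you may realize. Since $\deg Q_n=n$ equals $\deg\prod_j\epsilon_j$, two symmetric degree-$n$ solutions of the recursion agreeing at $\epsilon_n=0$ may differ by $c\,\epsilon_1\cdots\epsilon_n$, so the paper's assertion that evaluation at $\epsilon_n=0$ determines $Q_n$ uniquely is too quick (in contrast to the Losev--Manin case, where $\deg P_n=n-2<n$ makes uniqueness immediate); your intersection computation supplies exactly this otherwise-undetermined coefficient of $\prod_j\epsilon_j^2$, so it not only reproves the lemma but closes that pinhole in the recursion argument.
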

\begin{proof}
The first $n$ points corresponding to the $\cA_n$ chamber are light, so here Lemma~\ref{Thm:2pilimits} holds. Using \eqref{eqn:incidentzero}, we have \begin{equation} \label{dil3LM}
    V_{0,\cA_n}(i\b{\theta})\mid_{\epsilon_j=0}=0.
\end{equation}
Lightness of the first $n$ points means we may also apply Theorem~\ref{Thm:Dilaton} with $q(\cc)=\emptyset$.
Using \eqref{eqn:derivative}, we have \begin{align} \begin{split}
   &\left.\frac{\partial}{\partial \epsilon_{n}}\right|_{\theta_n=2\pi} \hspace{-4mm} V_{0,\cA_n}(i\b{\theta}) \\
   &= 4\pi^2\Big(2g-2+\sum_{j=1}^{n+3}a(\theta_j)\Big)\vert_{a(\theta_n)=0}V_{0,\cA_{n-1}}(i\theta_1,\ldots,i\theta_{n-1},i\theta_{n+1},i\theta_{n+2},i\theta_{n+3})   \\
    &=4\pi^2\left(-2+\sum_{j=1}^{n-1}\epsilon_j+\sum_{k=1}^{3}b_k\right)V_{0,\cA_{n-1}}(i\theta_1,\ldots,i\theta_{n-1},i\theta_{n+1},i\theta_{n+2},i\theta_{n+3}),\label{dil4LM} \end{split} 
\end{align}
where we have used that $\cA_n|_{\b{n+3}\setminus\{n\}}=\cA_{n-1}$.

 We know that $V_{0,\cA_n}(i\b{\theta})$ is a $2n$-degree polynomial, which is symmetric in $\epsilon_j$ and symmetric $b_k$. Additionally, \eqref{dil3LM} implies that it must have a factor of $\epsilon_1\epsilon_2\cdots \epsilon_n$. Define $n$-degree polynomials $Q_n:[0,1]^{n+3}\rightarrow \bbR$ which are symmetric in the first $n$ entries and final $3$ entries, such that $$V_{0,\cA_1}(i\b{\theta})=\epsilon_1\epsilon_2\cdots \epsilon_n Q_n(\epsilon_1,\epsilon_2,\ldots, \epsilon_n,b_1,b_2,b_3).$$ Then \eqref{dil4LM} gives the identity \begin{align} \begin{split}
    Q_n(\epsilon_1,\ldots,\epsilon_{n-1}&,0,b_1,b_2,b_3) \\
    &=4\pi^2\left(-2+\sum_{j=1}^{n-1}\epsilon_j+\sum_{k=1}^{3}b_k\right)Q_{n-1}(\epsilon_1,\ldots,\epsilon_{n-1},b_1,b_2,b_3). \end{split} \label{recursion2}
\end{align}
Since $Q_n$ is symmetric in its first $n$ entries, evaluation at $\epsilon_n=0$ defines it uniquely. 
The result from Section~\ref{ex:M04} in chamber $\cB_{3}$ (which is equivalent to the chamber $\cA_1$ in this section) gives that $V_{0,\cA_n}(i\b{\theta})=4\pi^2\epsilon_1(-2+\epsilon_1+b_1+b_2+b_3)$. It follows that $Q_1(\epsilon_1,b_1,b_2,b_3)=4\pi^2$. Starting from here, and using \eqref{recursion2}, we have that \begin{equation*}
    Q_n(\epsilon_1,\ldots,\epsilon_n,b_1,b_2,b_3)=(2\pi)^{2n}\left(-2+\sum_{j=1}^n\epsilon_j+\sum_{k=1}^nb_j\right)^{n}, 
\end{equation*}
so \begin{equation*}
   V_{0,\cA_n}(i\b{\theta})=(2\pi)^{2n}\left(\prod_{j=1}^n\epsilon_j\right)\left(-2+\sum_{j=1}^n\epsilon_j+\sum_{k=1}^3 b_k\right)^n
\end{equation*}
\end{proof}

\subsection{$\overline{\modm}_{1,\bold{a}}$ for $n=2$}
In this section, we compute the volume polynomials of $\overline{\modm}_{1,\bold{a}}$ for $n=2$ in each chamber. There are two chambers to consider, $\cb_j:\cP(\b{2})\rightarrow\{0,1\}$, where $\cb_0=\cc^M$ and $\cb_1=\cc^L$ correspond to the maximal and light chamber respectively. These chambers are separated by a wall $W_{\{1,2\}}$. \\ \indent 
In $\cb_0$, Mirzakhani's volume recursion can be used to compute \begin{align*}
    V_{1,\cb_0}(i\b{\theta})=V_{1,2}^{\text{Mirz}}(i\b{\theta})&=\frac{1}{192}(4\pi^2-\theta_1^2-\theta_2^2)(12\pi^2-\theta_1^2-\theta_2^2). 
\end{align*}
Apply Theorem~\ref{g,2 volume} to compute the single wall-crossing polynomial 
\begin{equation*}
    \wc_{\cc,\{1,2\}}(\boldsymbol{\theta})=\int_{0}^{\phi_{\{1,2\}}}  V_{1,1}(i\theta)\cdot \theta  d\theta.
\end{equation*} 
The integrand is the Mirzakhani volume polynomial, $ V_{1,1}(i\theta)=\frac{1}{48}(4\pi^2-\theta^2)$. So \begin{align*}
      \wc_{\cc,\{1,2\}}(\boldsymbol{\theta})&=\int_{0}^{\theta_1+\theta_2-2\pi}  \frac{1}{48}(4\pi^2\theta-\theta^3) d\theta \\
      &=\frac{1}{192}(\theta_1+\theta_2-2\pi)^2(8\pi^2-(\theta_1+\theta_2-2\pi)^2).
\end{align*}
Hence \begin{align*}
    V_{1,\cb_1}(i\b{\theta})&= V_{1,\cb_0}(i\b{\theta})+ \wc_{\cc,\{1,2\}}(\boldsymbol{\theta}) \\
    &=\frac{1}{48}(2\pi-\theta_1)(2\pi-\theta_2)(8\pi^2-\theta_1^2-\theta_2^2-(2\pi-\theta_1)(2\pi-\theta_2)).
\end{align*}

\subsection{$\overline{\modm}_{0,\bold{a}}$ for $n=4$}\label{ex:M04}
In this section we compute the volume polynomials by direct computation to emphasise the geometry.  This is also useful to observe the wall-crossing polynomials in a simple explicit case.  These volumes were also calculated in \cite{ETu2Dd}.

Hassett's compactifications of $\modm_{0,4}$ are all isomorphic $\overline{\modm}_{0,\bold{a}}\cong\overline{\modm}_{0,4}\cong \bp^1$ but the universal curve is dependent on the chamber that $\bold{a}$ lies in, which affects \eqref{intfibre} and  hence the volume polynomials.  There are $5$ possibilities, up to symmetry, for chambers 
$\cb_j\subset\cd_{0,4}$, in terms of cone angles as follows:
\begin{eqnarray*} 
&\cb_0=\cC^M&\hspace{-0.08cm} \ : \ \theta_i+\theta_j<2\pi,\text{ for all $i,j\in\{1,2,3,4\}$},\\[0.8em]
&\cb_1&\hspace{-0.08cm} \ : \  \theta_j+\theta_k<2\pi \text{ for } (j,k)\neq (3,4) \text{ and } \theta_3+\theta_4>2\pi,\\[0.8em]
&\cb_2&\hspace{-0.08cm} \ : \  \theta_j+\theta_k<2\pi \text{ for } \{j,k\}\subset\{1,2,3\},\, 
                               \theta_1+\theta_4<2\pi \\
                               &&\quad\text{ and } \theta_j+\theta_4>2\pi \text{ for } j=2,3,\\[0.8em]
&\cb_3& \hspace{-0.08cm}\ : \ \theta_j+\theta_k<2\pi \text{ for } \{j,k\}\subset\{1,2,3\} \text{ and } \theta_j+\theta_4>2\pi \text{ for } j=1,2,3, \\[0.8em]
&\cb_4=\cc_1& \hspace{-0.08cm} \ : \ \theta_1+\theta_i<2\pi \text{ for } i\in\{2,3,4\} \text{ and }  \theta_j+\theta_k>2\pi \text{ for } \{j,k\}\subset\{2,3,4\}.
\end{eqnarray*}

The chamber $\cb_0$ corresponds to the Deligne-Mumford compactification and the universal cure is obtained as 
$$\pi: \mathcal{X}_0
\longrightarrow S\cong\bp^1$$
where $\mathcal{X}_0=\text{Bl}_\Sigma(\bp^1\times\bp^1)$ is the blow up of $\bp^1\times \bp^1$ at three distinct points in the diagonal $\Sigma=\{(p_1,p_1),(p_2,p_2),(p_3,p_3)\}$ and $\pi$ then is given by the blow down and projection to the second component $\bp^1$.
\begin{figure}[ht]  
	\centerline{\includegraphics[height=4.45cm]{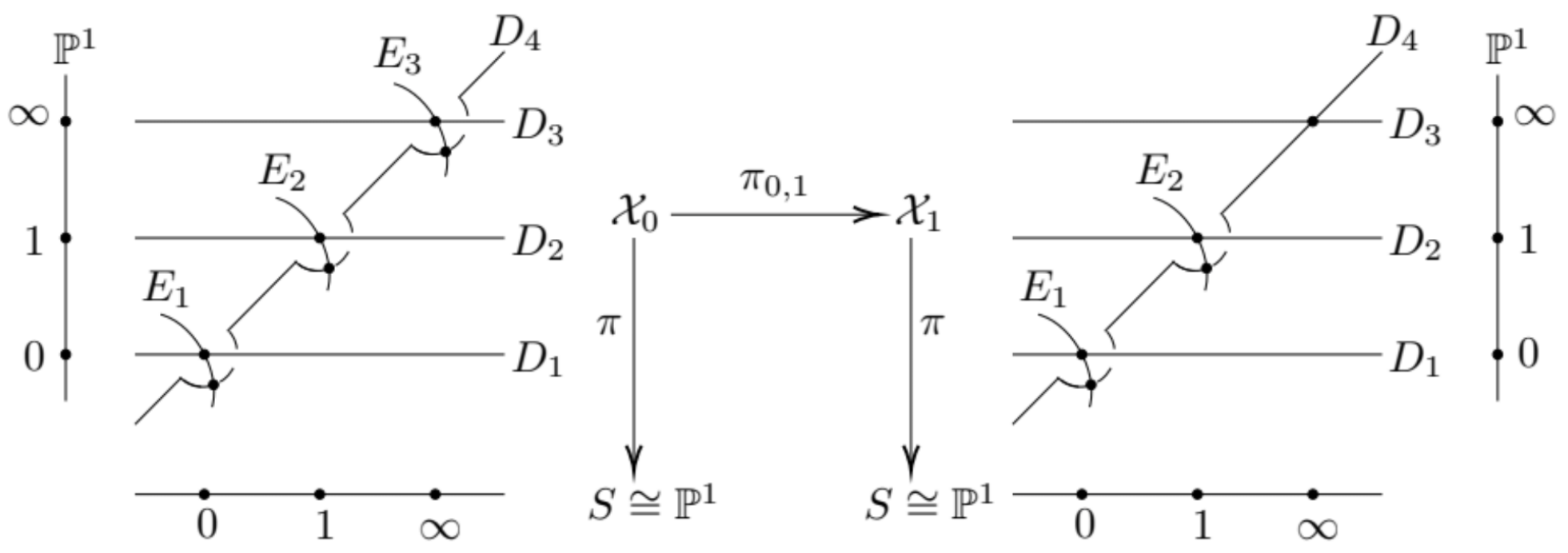}}
	\captionsetup{skip=0.6cm}
\caption{Hassett's compactification of $\cM_{0,4}$ in the chambers $\cb_0$ and $\cb_1$. Here $\pi_{0,1}$ is the birational reduction morphism, which contracts the exceptional divisor $E_3$ to a point.}
	\label{fig:hascom}
\end{figure}

This is shown in the left of Figure~\ref{fig:hascom}. We denote the three exceptional divisors as $E_1,E_2,E_3$ respectively and divisors $D_1$, $D_2$, $D_3$ and $D_4$ as the sections obtained as the proper transforms of the curves $\{p_i\}\times\bp^1$ for $i=1,2,3$ and the diagonal on $\bp^1\times\bp^1$ respectively. Hence if we denote by $H$ and $F$ the pullbacks to $\mathcal{X}_0$ from $\bp^1\times\bp_1$ of the horizontal and vertical fibre classes respectively, we obtain on the level of the classes that $D_j+E_j=H$ for $j=1,2,3$ and $D_4+E_1+E_2+E_3=H+F$. With this in hand we can compute the volume polynomial in the chamber $\cb_0$.
\smallskip

  {\bf Chamber $\cb_0$}: The first Chern class of the relative canonical sheaf is:
\[\hspace{-0.1cm}\frac{1}{2\pi}[\Omega_{\mathcal{X}_0/S}]\hspace{-0.1cm}=\hspace{-0.1cm}K_{\mathcal{X}_0}\otimes \pi^*(K_{S}^{*})\hspace{-0.1cm}=
\hspace{-0.1cm}-2H+E_1+E_2+E_3,
\]
hence the volume polynomial becomes
\begin{eqnarray}
V_{0,\cb_0}(i\b{\theta})&=&\frac12([\Omega_{\mathcal{X}_0/S}]+2\pi\mathbf{a}\cdot D)^2\nonumber\\
&=&\frac 1 2(2\pi (-2H+E_1+E_2+E_3+\sum_{j=1}^4 a_jD_j))^2\nonumber\\
&=&2\pi^2-\frac12\sum_{j=1}^4\theta_j^2, \label{C0vol}
\end{eqnarray}
as $E_i\cdot E_j=0$ for $i\ne j$, $E_i^2=-1$, $H\cdot F=1$ and $E_i\cdot F=E_j\cdot H=H^2=F^2=0$. 
\smallskip

 {\bf Chamber $\cb_1$}: The universal curve $\pi:\mathcal{X}_1\longrightarrow S$ for the chamber $\cb_1$ is obtained from $\mathcal{X}_0$ by contracting $E_3$. This is shown in Figure~\ref{fig:hascom}. Hence we obtain
  \[\frac{1}{2\pi}[\Omega_{\mathcal{X}_1/S}]=K_{\mathcal{X}_1}\otimes \pi^*(K_S^*)=-2H+E_1+E_2,
\]
hence
\begin{align}
    V_{0,\cb_1}(i\b{\theta})&=
-\frac12\theta_1^2-\frac12\theta_2^2+(2\pi-\theta_3)(2\pi-\theta_4). \label{C1vol}
\end{align}

  {\bf Chamber $\cb_2$}: The universal curve $\pi:\mathcal{X}_2\longrightarrow S$ for the chamber $\cb_2$ is obtained from $\mathcal{X}_1$ by contracting $E_2$.
  Hence we obtain \[\frac{1}{2\pi}[\Omega_{\mathcal{X}_2/S}]=K_{\mathcal{X}_2}\otimes \pi^*(K_S^*)=-2H+E_1,
\]
hence
\begin{align}
    V_{0,\cb_2}(i\b{\theta})&=
-\frac12\theta_1^2+\frac12\theta_4^2+(2\pi-\theta_4)(4\pi-\theta_2-\theta_3)-2\pi^2. \label{C2vol}
\end{align}

   {\bf Chamber $\cb_3$}:  The universal curve $\pi:\mathcal{X}_3\cong \bp^1\times\bp^1\longrightarrow S$ for the chamber $\cb_3$ is obtained from $\mathcal{X}_2$ by contracting $E_1$. Hence we obtain \[\frac{1}{2\pi}[\Omega_{\mathcal{X}_3/S}]=K_{\mathcal{X}_3}\otimes \pi^*(K_S^*)=-2H,
\]
hence
\begin{align}
    V_{0,\cb_3}(i\b{\theta})&=
(2\pi-\theta_4)(4\pi-\sum_{j=1}^4\theta_j). \label{C3vol}
\end{align}

    {\bf Chamber $\cb_4$}: Consider the universal curve for the $\cb_2$ chamber
    $\pi:\mathcal{X}_2\longrightarrow S.    $
  The fibre over $p_1\in S$ has two irreducible components. One component is $E_1$, the other we label $F_1$, which is the proper transform of the fibre $\bp^1\times\{p_1\}$ in $\bp^1\times \bp^1$. The universal curve $\pi:\mathcal{X}_4\longrightarrow S$ for the chamber $\cb_4$ is obtained from $\mathcal{X}_2$ by contracting $F_1$. Hence we obtain
   \[\frac{1}{2\pi}[\Omega_{\mathcal{X}_4/S}]=K_{\mathcal{X}_4}\otimes \pi^*(K_S^*)=-2H+E_1,
\]
hence
\begin{align}
    V_{0,\cb_4}(i\b{\theta})&=\tfrac12(4\pi-\sum_{j=1}^4 \theta_j)(4\pi-\sum_{j=2}^4\theta_j+\theta_1).  \label{C4vol}
\end{align}

{\bf Comparison with Theorem~\ref{wall-crossing formula}}: The difference between volume polynomials in adjacent chambers gives the following wall-crossing polynomials
\begin{eqnarray*}
   \wc_{\cb_0,\{3,4\}}(\boldsymbol{\theta})&=&\eqref{C1vol}-\eqref{C0vol}=\frac12(\theta_3+\theta_4-2\pi)^2,\\
   \wc_{\cb_1,\{2,4\}}(\boldsymbol{\theta})&=&\eqref{C2vol}-\eqref{C1vol}=\frac12(\theta_2+\theta_4-2\pi)^2,\\ 
    \wc_{\cb_2,\{1,4\}}(\boldsymbol{\theta})&=&\eqref{C3vol}-\eqref{C2vol}=\frac12(\theta_1+\theta_4-2\pi)^2,\\
  \wc_{\cb_2,\{2,3\}}(\boldsymbol{\theta})&=&\eqref{C4vol}-\eqref{C2vol}=\frac12(\theta_2+\theta_3-2\pi)^2.
\end{eqnarray*}
Observe that Theorem~\ref{wall-crossing formula} gives \begin{equation*}
    \wc_{\cb_0,\{3,4\}}(\boldsymbol{\theta})=\int_{0}^{\theta_3+\theta_4-2\pi}\hspace{-0.3cm} V_{0,\cb_0/S}( i\theta_1,i\theta_2,i\theta)V_{0,\cc_1^L}( i\theta,i\theta_3,i\theta_4) \cdot \theta \cdot d\theta
    =\int_{0}^{\theta_3+\theta_4-2\pi}\hspace{-0.25cm} \theta \cdot d\theta.
\end{equation*}
Hence $$ \wc_{\cb_0,\{3,4\}}(\boldsymbol{\theta})=\frac12(\theta_3+\theta_4-2\pi)^2,
$$
which agrees with the computation above. The other wall-crossing polynomials follow similarly.

\subsection{$\overline{\modm}_{0,\bold{a}}$ for $n=5$ }\label{ex:dependentwc}
In this section we compute all wall-crossing polynomials in this case using Theorem~\ref{wall-crossing formula} and the volumes computed in \S\ref{ex:M04}. There are two cases to consider.
\\

{\bf Crossing a wall $W_S$ for $|S|=2$:} without loss of generality we assume $S=\{4,5\}$.  Let $\cc\subset\cd_{0,5}$ be incident to and above a wall $W_S$.  Theorem~\ref{wall-crossing formula} then gives
\begin{equation*} 
\wc_{\cc,S}(\boldsymbol{\theta})=\int_{0}^{\phi_S} V_{0,\cc/S}(i\theta, i\theta_1,i\theta_2,i\theta_3)\cdot 1 \cdot \theta \cdot d\theta
\end{equation*}
where $\phi_S=\sum_{j\in S}\theta_j-2\pi$ and the chamber $\cc/S$ is dependent on the initial chamber $\cc$ and gives four cases specified by chambers in \S\ref{ex:M04} as
\begin{eqnarray*}
\text{Case 1:}&\cb_0& \text {for }\theta_i+\theta_j<2\pi,\text{ for all $i,j\in\{1,2,3\}$},\\
\text{Case 2:}&\cb_1& \text{for }\theta_1+\theta_2<2\pi,\hspace{0.2cm}\theta_1+\theta_3<2\pi,\hspace{0.2cm}\theta_2+\theta_3>2\pi,\\
\text{Case 3:}&\cb_2& \text{for }\theta_1+\theta_2<2\pi,\hspace{0.2cm}\theta_1+\theta_3>2\pi,\hspace{0.2cm}\theta_2+\theta_3>2\pi,\\
\text{Case 4:}&\cb_4& \text {for }\theta_i+\theta_j>2\pi,\text{ for all $i,j\in\{1,2,3\}$}.
\end{eqnarray*}
Hence we obtain the wall-crossing polynomial $\wc_{\cc,S}(\boldsymbol{\theta})$ becomes in Case 1:
$$\tfrac 18(2\pi-\theta_4-\theta_5)^2(4\pi^2-2(\theta_1^2+\theta_2^2+\theta_3^2)+4\pi(\theta_4+\theta_5)-(\theta_4+\theta_5)^2),  $$
in Case 2:
$$ \tfrac 18(2 \pi - \theta_4 - \theta_5)^2(12\pi^2-2\theta_1^2+4\theta_2\theta_3-(\theta_4+\theta_5)^2-4\pi(2\theta_2+2\theta_3-\theta_4-\theta_5)))),  $$
in Case 3:
$$ \tfrac 18(2 \pi - \theta_4 - \theta_5)^2(20\pi^2+2\theta_3(2\theta_1+2\theta_2+\theta_3)-(\theta_4+\theta_5)^2-4\pi(2\theta_1+2\theta_2+4\theta_3-\theta_4-\theta_5)), $$
and in Case 4:
$$\tfrac 18(2\pi-\theta_4-\theta_5)^2(2(4\pi-\theta_1-\theta_2-\theta_3)^3-(2\pi-\theta_4-\theta_5)^2).  $$

Observe that the wall-crossing polynomial $\wc_{\cc,S}(\boldsymbol{\theta})$ in this case is dependent on the incident chamber $\cc$. 
\\

{\bf Crossing a wall $W_S$ for $|S|=3$:} without loss of generality we assume $S=\{3,4,5\}$. Let $\cc\subset\cd_{0,5}$ be incident to and above a wall $W_S$.  Theorem~\ref{wall-crossing formula} then gives
\begin{equation*} 
\wc_{\cc,S}(\boldsymbol{\theta})=\int_{0}^{\phi_S} 1\cdot V_{0,\cb_4}(i\theta,i\theta_3,i\theta_4,i\theta_5)  \cdot \theta \cdot d\theta
\end{equation*}
where $\phi_S=\sum_{j\in S}\theta_j-4\pi$ and $\cb_4$ is as specified in \S\ref{ex:M04}. Hence we have
\begin{eqnarray*} 
\wc_{\cc,S}(\boldsymbol{\theta})&=&\int_{0}^{\phi_S} \tfrac12(4\pi-\theta-\sum_{j=3}^5 \theta_j)(4\pi+\theta-\sum_{j=3}^5\theta_j) \cdot \theta \cdot d\theta\\
&=&\tfrac 18(\theta_3+\theta_4+\theta_5-4\pi)^4
\end{eqnarray*}
Note that in this case the wall-crossing polynomial is independent of the choice of incident chamber $\cc$. This agrees with Corollary~\ref{cor:quotchambercrossing} as in this case all quotient chambers $\cc/S$ lie in $\cd_{0,3}$ and are hence equal.



\end{document}